\theoremstyle{plain}
\newtheorem{theorem}{Theorem}[section]
\newtheorem{corollary}[theorem]{Corollary}
\theoremstyle{definition}
\newtheorem{remark}[theorem]{Remark}
\newtheorem{remarks}[theorem]{Remarks}
\newtheorem{definition}[theorem]{Definition}
\numberwithin{equation}{section}
\newtoks\by
\newtoks\paper
\newtoks\book
\newtoks\jour
\newtoks\yr
\newtoks\pages
\newtoks\vol
\newtoks\publ
\def\ota{{\hbox\vol{???}}}
\def\cLear{\by=\ota\paper=\ota\book=\ota\jour=\ota\yr=\ota
\pages=\ota\vol=\ota\publ=\ota}
\def\endpaper{\the\by, \textit{\the\paper},
{\the\jour} \textbf{\the\vol} (\the\yr), \the\pages.\cLear} 
\def\endbook{\the\by, \textit{\the\book}, \the\publ.\cLear}
\def\endprep{\the\by, \textit{\the\paper}, \the\jour.\cLear}
\def\name#1#2{#1 #2}
\def\et{ and }
\def \n #1.#2{\Vert #1\Vert_{#2}}
\begin{document}

\title{Rubio de Francia's extrapolation Theory: estimates for the distribution function} 
\author {Mar\'ia J.\ Carro$^{*}$,  Javier Soria$^{*}$, and Rodolfo H.\ Torres$^{**}$} 

\address{M.J.\ Carro and J. Soria,   Department of
Applied Mathematics and Analysis, University of Barcelona, 
 08007  Barcelona, Spain}
\email{carro@ub.edu, soria@ub.edu}

\address{R.H. Torres, Department of Mathematics, University of Kansas,  
Lawrence, KS 66045, USA}
\email{torres@math.ku.edu}

\subjclass{42B35,46E30}
\keywords{Weighted inequalities, extrapolation, distribution function, Calder\'on-Zygmund operators, multilinear operators.}
\thanks{$^*$Both authors have been  partially supported by Grant MTM2007-60500.}
\thanks{ $^{**}$Author supported in part by National Science Foundation under Grant  DMS 0800492.}

\begin{abstract} 
Let $T$ be an arbitrary operator  bounded from $L^{p_0}(w)$ into $L^{p_0, \infty}(w)$
for every weight $w$ in the Muckenhoupt class $A_{p_0}$. It is proved in this article that the distribution function of $Tf$ with respect to any weight $u$ can be essentially majorized by the distribution function of $Mf$ with respect to $u$ (plus an integral term easy to control). As a consequence,   well-known extrapolation results, including results in a multilinear setting, can be obtained with very simple proofs. New applications in extrapolation for two-weight problems and estimates on rearrangement invariant spaces are established too.
\end{abstract}

\maketitle
\pagestyle{headings}\pagenumbering{arabic}\thispagestyle{plain}

\markboth{Rubio de Francia's extrapolation: estimates for the distribution function}{ }

\section{Introduction}\label{S:intro} 

In 1984, J.L. Rubio de Francia \cite{R} proved that if $T$ is a sublinear operator such that $T$ is bounded on $L^r(w)$ for every $w$ in the Muckenhoupt class $A_r$ ($r>1$) with constant only depending on 
$$
\Vert w\Vert _{A_r}=\sup_Q \bigg({1 \over |Q|} \int_Q w\bigg)\bigg({1\over |Q|} \int_Q w^{-1/(r-1)}\bigg)^{r-1}, 
$$
where the supremum is taken over all cubes $Q$, then for every $1<p<\infty$,  $T$ is bounded on $L^p(w)$ for every $w\in A_p$ with constant only depending on $\Vert w\Vert _{A_p}$. Since then, many interesting papers  concerning this topic have been published (see for example  \cite{CGMP,CMP1,cmp:cmp}). From those results, it is now known that, in fact, the operator $T$ plays no role. That is, if $(f,g)$ are two functions such that for some $p_0\ge 1$, 
\begin{equation}\label{main}
\int_{\mathbb R^n} g^{p_0}(x) w(x) dx \le C \int_{\mathbb R^n} f^{p_0}(x) w(x) dx,
\end{equation}
for every $w\in A_{p_0}$, with $C$ depending on $\Vert w\Vert _{A_{p_0}} $, then for every $1< p<\infty$
$$
\int_{\mathbb R^n} g^{p}(x) w(x) dx \le C  \int_{\mathbb R^n} f^{p}(x) w(x) dx,
$$
for every $w\in A_{p}$ and $C$ depending on $\Vert w\Vert _{A_{p}}$. From here it follows, for example, the so-called weak type version of Rubio de Francia theorem; that is, if $T$ is an operator such that $T$ is bounded from $L^r(w)$ into  $L^{r, \infty}(w)$ for every $w$ in the Muckenhoupt class $A_r$ ($r>1$), with constant only depending on $\Vert w\Vert _{A_r}$,  then $T$ is bounded from $L^p(w)$ into  $L^{p, \infty}(w)$ for every $w\in A_p$, with constant only depending on $\Vert w\Vert _{A_p}$.  

We  want to emphasize here that the classical situation is to extrapolate from a strong-type estimate to a strong-type estimate, while the main applications in this paper are to extrapolate from a weak-type estimate to a weak-type estimate. However, we recover  the strong type boundedness, as in the classical case, by interpolation.

The extrapolation theory has also been generalized to the case of weights in  $A_\infty=\cup_{p>1}A_p$, and many   consequences have been derived from them. 

In fact, the reason why an inequality of the form (\ref{main}) is so interesting is because there are many important operators $T$ in Harmonic Analysis and Partial Differential Equations, such as the  Hardy-Littlewood maximal  operator, singular integrals, commutators, etc., satisfying that 
$$
T:L^p(u) \longrightarrow L^{p}(u)
$$
is bounded for every $u\in A_p$, and hence the pair $(f, Tf)$ satisfies (\ref{main}) for every $f$ and every $u\in A_p$. Also, if $M$ is the Hardy-Littlewood maximal operator and $T$ is a Calder\'on-Zygmund operator, it  is known (see \cite{coif:coif,cf:cf}) that 
$$
\int_{\mathbb R^n} |Tf|^p(x) u(x) dx \le C \int_{\mathbb R^n} |Mf|^p(x) u(x) dx,
$$
for every $u\in A_\infty$, and hence the couple $( Mf, Tf)$ satisfies (\ref{main}) for every $f$ and every $u\in A_\infty$. 

Moreover, since the class of weights in the Muckenhoupt class is so large, an inequality of the form (\ref{main})  contains a lot of information which can be applied to obtain the boundedness  in many other function spaces, as it has been recently shown in \cite{CGMP}. The main results in that paper deal with the boundedness on rearrangement invariant spaces (with or without weights) and since in these spaces one has to measure essentially the level set $\{ x; g(x)>y\}$, it is quite natural to ask if there is some connection between the distribution function of $g$, with respect to any measure, and the function $f$. 

The goal of this paper is to prove this connection.  In fact, as was mentioned before, we start with a weaker condition than (\ref{main}), which is more natural for our purpose, namely 
$$
\Vert g\Vert _{L^{p_0, \infty}(u)}\le \varphi(||u||_{A_{p_0}}) \Vert f\Vert _{L^{p_0}(u)},
$$
for every $u\in A_{p_0}$ (we will also consider the cases $u\in A_{\infty}$ or $u\in A_1$), with $\varphi$ a function locally bounded; that is,  $\varphi$ satisfies that,  for every $M$,  $C_M=\sup_{0<t<M} \varphi (t)<\infty$.

This condition is the standard one in all the results concerning extrapolation and it will be assumed all over the paper (see, for example, \cite{graf}). 

With these estimates (see (\ref{debil1}),  (\ref{dist2}), and  (\ref{dist3})) we can prove the weak type version of Rubio de Francia's extrapolation results  and many others, including  new boundedness properties of operators on different kind of spaces. Also, we can deduce the boundedness of operators for two weights even in the off-diagonal case. 
 We do not pretend to give new proofs of  all the results already known in the literature, but we shall emphasize those that, as far we are concerned, are new or whose proofs are much shorter. In particular, our proof of the classical 
 Rubio de Francia's extrapolation is shorter than previous ones because it is the same proof for every $p$; that is, we do not need to make a difference between the cases  $p<p_0$ and  $p>p_0$.

We shall use the Hardy-Littlewood maximal operator
$$
Mf(x)=\sup_{x\in Q}{1\over |Q|}\int_Q |f(y)| dy,
$$
and, for each $0<\mu<1$, $M_\mu f(x)= \Big(M(f^{1/\mu})(x)\Big)^{\mu}$ (usually, in the literature, this operator is denoted as  $M_{1/\mu}$). The distribution function of $g$, with respect to a  positive locally integrable function $u$, is denoted by  
$$
\lambda_g^u(y)=u(\{ x; g(x)>y\}).
$$
We refer to \cite{BS}   for other definitions and results concerning distribution functions, the decreasing rearrangement function $f^*$ and  rearrangement invariant spaces, and to \cite{gr:gr} for well-known results on  weights and boundedness of operators in weighted Lebesgue spaces. The notation $A \lesssim B$ will denote an inequality of the form $A\le CB$, where the constant $C$ is independent of the fundamental parameters in $A$ and $B$.

\ 

\noindent
{\bf Acknowledgement:} We want to thank Javier Duoandikoetxea for many useful comments and remarks which have  improved the final version of the paper. In particular, he brought to our attention papers \cite{n:n} and \cite{n1:n1}, and we owe him Remark~\ref{duo}. 

\section{Main results on distribution functions}\label{S:main} 

In all the results that follow, we may assume, without loss of generality and by a simple approximation argument, that $g$ is a bounded function with compact support: just take $g_N(x)=g(x)\chi_{\{x\in B(0, N); |g(x)|\le N\}}$ and then make  $N\to\infty$ (Fatou's lemma gives the result). Observe that under these hypothesis:
\begin{equation}\label{finito}
\lambda_g^u(y)<\infty, \quad\text{for all }y>0.
\end{equation}

\subsection{Classical case}

\begin{theorem} \label {teo1}Let $f$ and $g$ be two positive functions such that,  for every $w\in A_{p_0}$ with $1<p_0<\infty$, we have  that
\begin{equation}\label{debil1}
\Vert g\Vert _{L^{p_0, \infty}(w)}\le \varphi(\Vert w\Vert _{p_0}) \Vert f\Vert _{L^{p_0}(w)},
\end{equation}
with $\varphi$ a locally bounded function. Then, for every $0\le\alpha<p_0-1$, every $0<\mu<1$ and every  positive locally integrable function $u$
$$
\lambda^u_g(y)\lesssim  \lambda^u_{Mf}(y) + {1\over y^{p_0-\alpha}} \int_{\mathbb R^n} f^{p_0-\alpha}(x) M_{\mu} (u \chi_{\{ g>y\}})(x) dx.
$$
\end{theorem}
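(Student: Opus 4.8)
The plan is to obtain the inequality from a \emph{single} use of the weak-type hypothesis~(\ref{debil1}), applied not to a fixed weight but to a weight $w$ built out of $y$, $u$, $f$ and $g$. By the reduction at the beginning of this section we may assume $g$ is bounded with compact support (so $\lambda^u_g(y)<\infty$), and after a routine further reduction ($u\mapsto\min(u,N)$, $N\to\infty$) that $u$ is bounded; we may also assume $\int_{\mathbb R^n}f^{p_0-\alpha}M_\mu(u\chi_{\{g>y\}})<\infty$, since otherwise there is nothing to prove (this finiteness forces $f\in L^1_{\loc}$, the only local integrability we shall use). Fix $y>0$, set $E=\{g>y\}$ and $E_1=E\cap\{Mf\le y\}$, and note
$$\lambda^u_g(y)=u(E_1)+u\big(E\cap\{Mf>y\}\big)\le u(E_1)+\lambda^u_{Mf}(y),$$
so everything reduces to estimating $u(E_1)$.

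Choose $\delta$ with $\alpha/(p_0-1)\le\delta<1$ --- this is possible \emph{exactly} because $\alpha<p_0-1$; e.g.\ $\delta=\tfrac12\big(1+\tfrac{\alpha}{p_0-1}\big)$ --- and put
$$D=\Big(\tfrac1y\,M\big(\max(f,y)\big)\Big)^{\delta},\qquad w=M_\mu(u\chi_{E_1})\cdot D^{\,1-p_0}.$$
Everything hinges on three facts about $w$. (i) \emph{$w\in A_{p_0}$ with $\|w\|_{p_0}\le\kappa$, $\kappa=\kappa(n,p_0,\mu,\alpha)$:} writing $w=w_0\,w_1^{1-p_0}$ with $w_0=M_\mu(u\chi_{E_1})$ and $w_1=D$, both $w_0$ and $w_1$ are $A_1$ weights with constants depending only on $n,\mu$ and on $n,\delta$, by the Coifman--Rochberg theorem that $(Mh)^{\beta}\in A_1$ for $0<\beta<1$ (used with $h=(u\chi_{E_1})^{1/\mu},\ \beta=\mu$, and with $h=\max(f,y),\ \beta=\delta$), and $\|w_0 w_1^{1-p_0}\|_{p_0}\le\|w_0\|_{A_1}\|w_1\|_{A_1}^{p_0-1}$ is elementary. (ii) \emph{$w\gtrsim u$ a.e.\ on $E_1$:} since $\max(f,y)\le f+y$ one has $M(\max(f,y))\le Mf+y\le 2y$ on $\{Mf\le y\}\supseteq E_1$, hence $D\le2^{\delta}$ there, while $M_\mu(u\chi_{E_1})\ge u\chi_{E_1}$ a.e.; thus $w\ge 2^{-\delta(p_0-1)}u$ on $E_1$. (iii) \emph{$f^{p_0}w\le y^{\alpha}f^{p_0-\alpha}M_\mu(u\chi_{E})$ everywhere:} from $M(\max(f,y))\ge\max(f,y)$ we get $D\ge(\max(f,y)/y)^{\delta}$, so $D^{1-p_0}\le(y/\max(f,y))^{\delta(p_0-1)}\le(y/\max(f,y))^{\alpha}$ (using $\delta(p_0-1)\ge\alpha$ and $y\le\max(f,y)$); combining this with $M_\mu(u\chi_{E_1})\le M_\mu(u\chi_{E})$ and $f^{p_0}\le\max(f,y)^{\alpha}f^{p_0-\alpha}$ gives (iii).

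With these in hand one simply invokes~(\ref{debil1}) for $w$. Since $\varphi$ is locally bounded, $\varphi(\|w\|_{p_0})\le\varphi(\kappa)=:C$, and (iii) gives $\int f^{p_0}w\le y^{\alpha}\int f^{p_0-\alpha}M_\mu(u\chi_E)$, so
$$y^{p_0}\,w(E)\le\|g\|_{L^{p_0,\infty}(w)}^{p_0}\le\varphi(\|w\|_{p_0})^{p_0}\int_{\mathbb R^n}f^{p_0}w\le C^{p_0}\,y^{\alpha}\int_{\mathbb R^n}f^{p_0-\alpha}M_\mu(u\chi_E).$$
By (ii), $u(E_1)\le2^{\delta(p_0-1)}w(E_1)\le2^{\delta(p_0-1)}w(E)$, and inserting this into the split of the first paragraph yields the theorem.

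The pointwise inequalities in (ii) and (iii) and the bookkeeping of constants are routine; the one genuinely delicate point --- the heart of the matter --- is the \emph{choice} of the weight $w$. It must at once be an $A_{p_0}$ weight with a universal constant, be large enough to dominate $u$ on the part $E_1$ of $\{g>y\}$ where $Mf$ is under control (so that $u(E_1)$ can be recovered from $w(E)$), and be small enough where $f$ is large that $\int f^{p_0}w$ can be traded for the cheaper quantity $y^{\alpha}\int f^{p_0-\alpha}M_\mu(u\chi_{\{g>y\}})$. The negative power $D^{1-p_0}$ of a fractional maximal function of $\max(f,y)$ is precisely what reconciles these competing demands, and the requirement $\delta<1$ --- needed for $D^{1-p_0}$ to be the $A_1^{1-p_0}$ factor of an $A_{p_0}$ weight --- is exactly what forces the hypothesis $\alpha<p_0-1$.
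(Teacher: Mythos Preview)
Your proof is correct and follows essentially the same approach as the paper: split off $\{Mf>y\}$, manufacture an $A_{p_0}$ weight as $(\text{$A_1$ weight})\cdot(\text{$A_1$ weight})^{1-p_0}$ via Coifman--Rochberg, and apply the hypothesis~(\ref{debil1}) once. The paper's execution is a bit more economical: it simply takes $v=(Mf)^{-\alpha}M_\mu(u\chi_{\{g>y\}})$ (using that $(Mf)^{\alpha/(p_0-1)}\in A_1$ precisely because $\alpha/(p_0-1)<1$) and then bounds $f^{p_0}(Mf)^{-\alpha}\le f^{p_0-\alpha}$, thereby avoiding your auxiliary parameter $\delta$ and the truncation $\max(f,y)$.
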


\begin{proof}  For every $\alpha\ge 0$, 
\begin{eqnarray*}
\lambda^u_g(y)&\le& \lambda^u_{Mf}(y)+u(\{x; g(x)>y, \ Mf(x)\le y\}) 
\\
&=& 
 \lambda^u_{Mf}(y)+{1\over y^{p_0}} y^{p_0}\int_{\{ g(x)>y, Mf(x)\le y\}} u(x) dx
 \\
& \le&  \lambda^u_{Mf}(y)+{1\over y^{p_0}} y^{p_0}\int_{\{ g(x)>y \}} \Big ({ y\over Mf(x)}\Big)^\alpha u(x) dx
 \\
& \le &
 \lambda^u_{Mf}(y)+{1\over y^{p_0-\alpha}} y^{p_0}\int_{\{ g(x)>y \}} (Mf(x))^{-\alpha} M_\mu( u \chi_{\{ g>y\}})(x) dx.
 \end{eqnarray*}
 
 Now, if $M_{\mu} (u \chi_{\{ g>y\}})(x)=\infty$ in a set of positive measure, the result is trivial. Otherwise, if $\alpha <p_0-1$,  then $ v(x) =(Mf(x))^{-\alpha} M_\mu( u \chi_{\{ g>y\}})(x)\in A_{p_0}$, with 
$ \Vert v\Vert _{A_{p_0}}\lesssim\bigg( { p_0-1-\alpha}\bigg)^{1-p_0}{1\over 1-\mu}$,
  and hence we can apply the hypothesis to get
 
\begin{eqnarray*}
\lambda^u_g(y)
&\lesssim& \lambda^u_{Mf}(y) + {1\over y^{p_0-\alpha}} \int_{\mathbb R^n} f^{p_0}(x)  (Mf(x))^{-\alpha} M_{\mu} (u \chi_{\{ g>y\}})(x) dx
\\
&\lesssim&
\lambda^u_{Mf}(y) +  {1\over y^{p_0-\alpha}} \int_{\mathbb R^n} f^{p_0-\alpha}(x) M_{\mu} (u \chi_{\{ g>y\}})(x) dx.
\end{eqnarray*}
\end{proof}

The next result will allow us to include the case $p_0=1$ and $\alpha=p_0-1$ in the previous theorem.

\begin{theorem} \label{teo2} Let $f$ and $g$ be two positive functions such that (\ref{debil1})  holds for every $w\in A_{p_0}$, for some  $1\le p_0<\infty$. Then, for every $0<\mu<1$, every $0<\delta<1$, and every positive locally integrable function $u$
\begin{equation}\label{dist2}
\lambda^u_g(y) \lesssim c_{p_0}\lambda^u_{M_{\delta}f}(y) + {1\over y} \int_{\mathbb R^n} f(x) M_{\mu} (u \chi_{\{ g>y\}})(x) dx,
\end{equation}
where
$c_{p_0}=1$ if $p_0>1$ and $0$ if $p_0=1$.
\end{theorem}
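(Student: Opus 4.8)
The plan is to mimic the proof of Theorem~\ref{teo1} but to split off the region where $Mf$ is \emph{small} with a different threshold, so that the endpoint case $\alpha=p_0-1$ (and in particular $p_0=1$) becomes accessible. First I would write, exactly as before,
\[
\lambda^u_g(y)\le \lambda^u_{M_\delta f}(y)+u(\{x:\ g(x)>y,\ M_\delta f(x)\le y\}),
\]
and then estimate the second term. On the set $\{g>y,\ M_\delta f\le y\}$ we have $(M_\delta f(x))^{p_0-1-\delta}\le y^{\,p_0-1-\delta}$ when $p_0>1$ (this is where the constant $c_{p_0}$ enters: for $p_0=1$ there is no such factor and we simply use $1\le y^{-1}\cdot\,$something, so that term is handled directly). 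Thus, writing $\chi=\chi_{\{g>y\}}$,
\[
u(\{g>y,\ M_\delta f\le y\})\le \frac{1}{y^{p_0}}\int_{\{g>y\}} y^{p_0}u(x)\,dx
\le \frac{c_{p_0}}{y}\int_{\{g>y\}}(M_\delta f(x))^{p_0-1-\delta}\,y^{\,\delta-p_0+2}\,u(x)\,dx
\]
for $p_0>1$; more cleanly, on that set $y^{p_0}\le y\,(M_\delta f(x))^{-(p_0-1)}\cdot y^{p_0-1}$ is the wrong direction, so instead one bounds $y^{p_0}u \le y\,(M_\delta f)^{-(p_0-1)+\text{something}}\cdots$. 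The cleanest route is: on $\{g>y,\ M_\delta f\le y\}$ one has $y^{\,p_0-1}\le (M_\delta f(x))^{-(p_0-1)}y^{\,2(p_0-1)}$ is again wrong; rather use $1\le y/Mf(x)$? Let me instead follow the genuine mechanism below.

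The real point: I want to produce a weight in $A_{p_0}$ of the form $v(x)=(M_\delta f(x))^{p_0-1}\,M_\mu(u\chi)(x)$ when $p_0>1$, and $v(x)=M_\mu(u\chi)(x)$ when $p_0=1$. The key algebraic fact to check is that for $0<\delta<1$ and $0<\mu<1$, the weight $(M_\delta f)^{p_0-1}M_\mu(u\chi)$ lies in $A_{p_0}$ with $\|v\|_{A_{p_0}}$ controlled by a constant depending only on $\delta,\mu,p_0$ — this uses the standard Coifman–Rochberg fact that a power $(Mh)^{\gamma}$ of a maximal function is an $A_1$ weight for $0\le\gamma<1$ (equivalently $(M_\delta f)^{p_0-1}=(M(f^{1/\delta}))^{\delta(p_0-1)}$ with exponent $\delta(p_0-1)$, which is $<1$ precisely because $\delta<1$; note $p_0-1$ could be large, so it is crucial that the exponent multiplying the plain maximal function is $\delta(p_0-1)$, and one needs $\delta(p_0-1)<1$ — hmm, this forces $\delta<1/(p_0-1)$, so one should just take $\delta$ small or phrase it with $M_\delta f=(M(f^{1/\delta}))^\delta$ and the Coifman–Rochberg exponent being $\delta$ itself times $(p_0-1)$; in any case for $p_0$ fixed one restricts to $\delta<\min(1,1/(p_0-1))$, which is harmless), together with the fact that $M_\mu(u\chi)^{1}$ is $A_1$ for the $A_\infty$-part. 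Then the $A_{p_0}$ bound follows from the factorization $A_1\cdot A_1^{1-p_0}\subset A_{p_0}$: write $v = w_0\, w_1^{1-p_0}$ with $w_0=M_\mu(u\chi)\in A_1$ and $w_1 = (M_\delta f)^{-(p_0-1)/(p_0-1)}=\,$... I would instead simply verify the $A_{p_0}$ condition directly by a standard computation, as the authors do for Theorem~\ref{teo1}, obtaining a constant bounded in terms of $(1-\mu)^{-1}$ and $\delta$.

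Once $v\in A_{p_0}$ with a controlled constant, I apply hypothesis~\eqref{debil1} to get $\|g\|_{L^{p_0,\infty}(v)}\lesssim \varphi(\|v\|_{A_{p_0}})\|f\|_{L^{p_0}(v)}$. Evaluating the weak-type quantity at level $y$ and using $\chi=\chi_{\{g>y\}}$ gives
\[
y^{p_0}\int_{\{g>y\}}(M_\delta f(x))^{p_0-1}M_\mu(u\chi)(x)\,dx
\lesssim \int_{\R^n} f(x)^{p_0}(M_\delta f(x))^{p_0-1}M_\mu(u\chi)(x)\,dx.
\]
On the left I bound below using $u\le M_\mu(u\chi)$ on $\{g>y\}$ (since $\mu<1$ and $0\le\phi\le M_\mu\phi$) and $(M_\delta f)^{p_0-1}\ge y^{p_0-1}$ is false on the bad set — rather, on $\{g>y,\ M_\delta f\le y\}$ I use $(M_\delta f)^{p_0-1}\le y^{p_0-1}$ in the \emph{opposite} display, namely to control $u(\{g>y,\ M_\delta f\le y\})$: there $y^{p_0}u(x) = y\cdot y^{p_0-1}u(x)\le y\,(M_\delta f(x))^{p_0-1}\,(y/M_\delta f(x))^{p_0-1}u(x)$ — circular again, so the honest statement is: on the bad set $M_\delta f\le y$ forces $y^{p_0-1}\ge (M_\delta f)^{p_0-1}$, hence
$y^{p_0}u \le y\,(M_\delta f)^{p_0-1}\,(y/M_\delta f)\cdot(y/M_\delta f)^{p_0-2}u$, and one absorbs all the $(y/M_\delta f)\ge 1$ factors only if there is exactly one of them — which is the case precisely when we also use $y^{-1}$ out front. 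I therefore organize it as: $u(\{g>y,M_\delta f\le y\})\le y^{-p_0}\int_{\{g>y\}}(Mf$-type bound$)$, and on the right, after the hypothesis, bound $f^{p_0}(M_\delta f)^{p_0-1}\le f^{p_0}\cdot f^{p_0-1}$ is false since $M_\delta f\ge f$; instead I use $(M_\delta f)^{p_0-1}$ on \emph{both} sides and cancel, ending with $y^{-1}\int f\cdot(\text{leftover})$. The honest leftover: after cancelling $(M_\delta f)^{p_0-1}$ formally is illegitimate because the left integrand has it as a genuine weight; rather one notes $f(x)^{p_0}(M_\delta f(x))^{p_0-1}\le f(x)\,(M_\delta f(x))^{2(p_0-1)}\le f(x)\,(M_\delta f(x))^{p_0-1}\cdot(M_\delta f(x))^{p_0-1}$, and this extra $(M_\delta f)^{p_0-1}$ is \emph{not} bounded. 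So the correct final manipulation, which I expect to be the main obstacle to state cleanly, is: on the right use $f\le M_\delta f$ nowhere, but instead absorb by choosing the weight to be $(M_\delta f)^{\alpha}$ with $\alpha=p_0-1-\delta'$ for a tiny $\delta'$ playing the role of $\delta$ in Theorem~\ref{teo1}'s $\alpha$, getting $f^{p_0}(M_\delta f)^{-(p_0-1-\delta')}\le f^{1+\delta'}$, then $\le f\cdot y^{\delta'}$ is wrong too. In summary, I expect the crux to be the correct bookkeeping of powers so that the right-hand side collapses to $y^{-1}\int f\,M_\mu(u\chi)$, which forces the weight to be $(M_\delta f)^{p_0-1}M_\mu(u\chi)$ and forces using, on the bad set, the inequality $(M_\delta f)^{\,p_0-1}\le y^{\,p_0-1}$ to pull out exactly one factor of $y^{-1}$ after the homogeneity of the weak-type norm is accounted for; verifying that $(M_\delta f)^{p_0-1}M_\mu(u\chi)\in A_{p_0}$ uniformly is the one genuinely technical ingredient, and everything else is the same four-line chain as in the proof of Theorem~\ref{teo1}.
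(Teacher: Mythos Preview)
Your overall plan is exactly the paper's: redo the Theorem~\ref{teo1} argument with $M_\delta$ in place of $M$ so that the choice $\alpha=p_0-1$ becomes admissible. But you have the \emph{sign} of the exponent on $M_\delta f$ wrong, and that single error is what generates all of the ``wrong direction'' loops in your write-up.

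The weight to use is
\[
v(x)=(M_\delta f(x))^{-(p_0-1)}\,M_\mu(u\chi_{\{g>y\}})(x),
\]
not $(M_\delta f)^{+(p_0-1)}M_\mu(u\chi)$. With the negative exponent the factorization is immediate: $v=w_0\,w_1^{1-p_0}$ with $w_0=M_\mu(u\chi)=(M((u\chi)^{1/\mu}))^{\mu}\in A_1$ and $w_1=M_\delta f=(M(f^{1/\delta}))^{\delta}\in A_1$, by Coifman--Rochberg, for \emph{every} $0<\mu,\delta<1$. So $v\in A_{p_0}$ with constant depending only on $\mu,\delta,p_0$; the restriction $\delta<1/(p_0-1)$ that worried you disappears.

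With this correction the chain is four clean lines for $p_0>1$. On $\{g>y,\ M_\delta f\le y\}$ one has $(y/M_\delta f)^{p_0-1}\ge 1$, hence
\[
u(\{g>y,\ M_\delta f\le y\})
\le y^{p_0-1}\int_{\{g>y\}} (M_\delta f)^{-(p_0-1)}\,M_\mu(u\chi)\,dx
= y^{p_0-1}\int_{\{g>y\}} v.
\]
Apply the hypothesis at level $y$ to get $y^{p_0}\int_{\{g>y\}}v\lesssim\int f^{p_0}v$, and then use $f\le M_\delta f$ to obtain $f^{p_0}(M_\delta f)^{-(p_0-1)}\le f$. This gives exactly $y^{-1}\int f\,M_\mu(u\chi)$.

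For $p_0=1$ no splitting is needed: on $\{g>y\}$ one has $u\le M_\mu(u\chi)$, so
\[
\lambda^u_g(y)\le \frac{1}{y}\cdot y\int_{\{g>y\}} M_\mu(u\chi)\,dx
\lesssim \frac{1}{y}\int_{\R^n} f\,M_\mu(u\chi)\,dx,
\]
directly from \eqref{debil1} with the $A_1$ weight $M_\mu(u\chi)$. This is why $c_1=0$.
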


\begin{proof} If $p_0>1$  the proof is completely similar to the previous one, except that now we work with $M_{\delta}$ instead of $M$ and hence we can take $\alpha=p_0-1$. 

On the other hand,  if $p_0=1$, then 
\begin{eqnarray*}
\lambda^u_g(y)&=& u(\{x; g(x)>y\}) 
\le 
{y\over y}  \int_{\{ g(x)>y \}} M_\mu( u \chi_{\{ g>y\}})(x) dx\\
&\le& {\varphi\big({C\over 1-\mu}\big)\over y} \int_{\mathbb R^n} f(x) M_{\mu} (u \chi_{\{ g>y\}})(x) dx.
 \end{eqnarray*}
\end{proof}

As mentioned in the introduction, there are also extrapolation results for $A_\infty$ weights. In this case, we have a similar estimate for the distribution function in which the parameter $\alpha$ can be taken up to the value $p_0$.  

\begin{theorem} \label{teoo3} Let $f$ and $g$ be two positive functions such that (\ref{debil1})  holds for every $w\in A_{\infty}$. Then, for every $0\le \alpha\le p_0$, every $0<\mu<1$, every $r>0$, and every  positive locally integrable function $u$, 
\begin{equation}\label{dist3}
\lambda^u_g(y)\lesssim  \lambda^u_{M_r f}(y) + {1\over y^{p_0-\alpha}} \int_{\mathbb R^n} f^{p_0-\alpha}(x) M_\mu(u \chi_{\{ g>y\}})(x) dx.
\end{equation}
\end{theorem}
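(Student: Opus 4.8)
The plan is to mimic the proof of Theorem~\ref{teo1}, but replacing the $A_{p_0}$ membership argument with an $A_\infty$ membership argument, which is what permits pushing the exponent $\alpha$ all the way up to $p_0$. First I would split, exactly as before,
\begin{equation*}
\lambda^u_g(y)\le \lambda^u_{M_rf}(y)+u(\{x:g(x)>y,\ M_rf(x)\le y\}),
\end{equation*}
and on the second set bound $1\le (y/M_rf(x))^\alpha$ for any $\alpha\ge 0$, then insert the pointwise inequality $u\chi_{\{g>y\}}\le M_\mu(u\chi_{\{g>y\}})$ to obtain
\begin{equation*}
u(\{x:g(x)>y,\ M_rf(x)\le y\})\le \frac{y^\alpha}{y^{p_0}}\,y^{p_0}\int_{\{g>y\}}(M_rf(x))^{-\alpha}M_\mu(u\chi_{\{g>y\}})(x)\,dx.
\end{equation*}
As in Theorem~\ref{teo1}, if $M_\mu(u\chi_{\{g>y\}})$ is infinite on a set of positive measure the claim is trivial, so assume it is finite a.e.

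The key point is then that the weight $v(x)=(M_rf(x))^{-\alpha}M_\mu(u\chi_{\{g>y\}})(x)$ lies in $A_\infty$ for \emph{every} $\alpha\ge 0$ (not just $\alpha<p_0-1$). This is where I expect the main work to be: one writes $v$ as a product of $(M_rf)^{-\alpha}$ and $M_\mu(u\chi_{\{g>y\}})$, notes that any power (positive or negative) of a maximal function $Mh$ of the form $(Mh)^\beta$ is an $A_\infty$ weight — indeed $(Mh)^{-\alpha r}\in A_1$ for $0\le \alpha r<1$ by the Coifman–Rochberg lemma, so $(M_rf)^{-\alpha}=(M(f^{1/r}))^{-\alpha r}$ is an $A_1$ weight once $r$ is chosen large (which we may, since $r>0$ is arbitrary), and similarly $M_\mu(u\chi_{\{g>y\}})=(M((u\chi_{\{g>y\}})^{1/\mu}))^{\mu}\in A_1$. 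A product of two $A_1$ weights is in $A_\infty$ (in fact in $A_2$), with $A_\infty$ constant controlled by the $A_1$ constants, hence uniformly bounded in $y$. (Even without the $A_1$ route, $(M_rf)^{-\alpha}\in A_\infty$ because its logarithm is BMO-type, and $A_\infty$ is stable under products of an $A_1$ weight and an $A_\infty$ weight.) Either way, $\|v\|_{A_\infty}$ is bounded by a constant independent of $y$.

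With $v\in A_\infty$ and $\|v\|_{A_\infty}\lesssim 1$, the hypothesis (\ref{debil1}) applies to $v$, giving
\begin{equation*}
y^{p_0}\int_{\{g>y\}}(M_rf)^{-\alpha}M_\mu(u\chi_{\{g>y\}})\,dx
= y^{p_0}\int_{\{g>y\}} v\,dx
\le y^{p_0}\lambda^v_g(y)
\le \|g\|_{L^{p_0,\infty}(v)}^{p_0}
\lesssim \int_{\rn} f^{p_0} v\,dx,
\end{equation*}
and since $\int f^{p_0}v=\int f^{p_0}(M_rf)^{-\alpha}M_\mu(u\chi_{\{g>y\}})\le \int f^{p_0-\alpha}M_\mu(u\chi_{\{g>y\}})$ using the trivial pointwise bound $f(x)\le Mf(x)\le (M_rf(x))$ — more precisely $f\le M_r f$ pointwise so $f^\alpha (M_rf)^{-\alpha}\le 1$ — we conclude
\begin{equation*}
\lambda^u_g(y)\lesssim \lambda^u_{M_rf}(y)+\frac{1}{y^{p_0-\alpha}}\int_{\rn} f^{p_0-\alpha}(x)\,M_\mu(u\chi_{\{g>y\}})(x)\,dx,
\end{equation*}
which is (\ref{dist3}). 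The main obstacle, as indicated, is verifying the uniform $A_\infty$ bound for $v$ for the full range $0\le\alpha\le p_0$; the freedom to choose $r$ (and $\mu$) small is precisely what makes the Coifman–Rochberg argument go through and removes the restriction $\alpha<p_0-1$ present in the $A_{p_0}$ case.
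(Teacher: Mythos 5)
Your overall plan is exactly the paper's: mimic the proof of Theorem~\ref{teo1}, observe that $v=(M_rf)^{-\alpha}M_\mu(u\chi_{\{g>y\}})$ is now only required to be in $A_\infty$ rather than $A_{p_0}$, apply (\ref{debil1}), and then use $f\le M_rf$ a.e.\ to absorb $(M_rf)^{-\alpha}$ into $f^{p_0}$. That part is fine, and your remark at the end ($f\le M_rf$ pointwise) is the correct one; the intermediate claim $Mf\le M_rf$ is false for $r>1$, but you don't use it.

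The gap is in the justification that $v\in A_\infty$ with a uniform constant. Three specific steps there are wrong. First, $(M_rf)^{-\alpha}=(M(f^{1/r}))^{-r\alpha}$ is \emph{not} an $A_1$ weight for $\alpha>0$: Coifman--Rochberg gives $(Mh)^{\delta}\in A_1$ only for $0\le\delta<1$, and the exponent here is $-r\alpha\le 0$, so it is not in the admissible range unless $\alpha=0$. Choosing $r$ large only makes $-r\alpha$ more negative (and in any case $r$ is a given parameter in the statement, not one you are free to tune). Second, a product of two $A_1$ weights is \emph{not} in general an $A_\infty$ weight; for instance $|x|^{-1/2}\cdot|x|^{-1/2}=|x|^{-1}$ on $\R$ fails to be locally integrable. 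The relevant factorization is the \emph{quotient} form $w\in A_p\iff w=w_1w_2^{1-p}$ with $w_1,w_2\in A_1$. Third, it is likewise false that the product of an $A_1$ weight and an $A_\infty$ weight must be $A_\infty$.

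The fix is short and is what the paper implicitly invokes. Take $w_1=M_\mu(u\chi_{\{g>y\}})$, which is $A_1$ by Coifman--Rochberg since $M_\mu h=(M(h^{1/\mu}))^{\mu}$ with $\mu<1$, with $A_1$ constant depending only on $\mu$. Then write $(M_rf)^{-\alpha}=w_2^{1-p}$ with
$$
w_2=(M_rf)^{\alpha/(p-1)}=\bigl(M(f^{1/r})\bigr)^{r\alpha/(p-1)} ,
$$
and choose $p>1+r\alpha$ so that $r\alpha/(p-1)<1$; then $w_2\in A_1$ by Coifman--Rochberg. Hence $v=w_1w_2^{1-p}\in A_p\subset A_\infty$, with $\Vert v\Vert_{A_\infty}\le C_{r,\mu,\alpha}$ independent of $y$, $u$, $f$. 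With this replacement the rest of your argument closes correctly.
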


\begin{proof} In this case  the weight $ v(x) =(M_r f(x))^{-\alpha} M_\mu( u \chi_{\{ g>y\}})(x)\in A_{\infty}$, for every $\alpha\ge 0$, and  
$ \Vert v\Vert _{A_\infty }\lesssim C_{r, \mu, \alpha}$. Hence we get easily the result as in the proof of Theorem \ref{teo1}. 
\end{proof}

The hypothesis of Theorem~\ref{teoo3} is satisfied by the following couples:

\begin{enumerate}
\item[(i)] $(Mf, Tf)$, with $T$ any Calder\'on-Zygmund operator (see \cite{coif:coif,cf:cf}).

\item[(ii)] $(M^2 f, C_{b}f)$, where $M^2$ is the second iteration of $M$, $C_b f(x)=T(bf)(x)- bTf(x)$ is the commutator of  any Calder\'on-Zygmund operator $T$, and $b\in BMO$. Similarly for the higher order commutator $(M^{m+1} f, C_{b}^mf)$  (see \cite{pe3}). 

\item[(iii)] $(M_\alpha f, I_\alpha f)$,  with $M_\alpha$ the fractional maximal operator and $I_\alpha$ the fractional integral, with $0<\alpha<n$ and $n$ is the dimension (see \cite{mw:mw}).
\end{enumerate}

In fact, if the function $f$ in the previous theorem satisfies that, for some $0<\beta<1$, $f^{\beta}\in A_1$, then the term $\lambda^u_{M_rf}(y)$ in (\ref{dist3}) can be substituted by $\lambda^u_{f}(y)$. This observation can be applied to the couple of example (i) to obtain the following result.

\begin{corollary}
 For every  Calder\'on-Zygmund operator $T$,  every $0<\mu<1$ and every $0<q<\infty$, 
\begin{equation*}\label{mwci}
\lambda^u_{Tf}(y)\lesssim  \lambda^u_{Mf}(y) + {1\over y^{q}} \int_{\mathbb R^n} (Mf)^q(x) M_\mu(u \chi_{\{|Tf| >y\}})(x) dx.
\end{equation*}
\end{corollary}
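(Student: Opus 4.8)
The plan is to derive the corollary directly from Theorem~\ref{teoo3} applied to the couple $(Mf, Tf)$, using the fact (established just after the statement of the theorem) that this couple satisfies hypothesis (\ref{debil1}) for every $w\in A_\infty$, with the classical Coifman--Fefferman exponent $p_0$ available in the full range; more precisely, for a Calder\'on--Zygmund operator $T$ one has $\Vert Tf\Vert_{L^{p_0,\infty}(w)}\lesssim \Vert Tf\Vert_{L^{p_0}(w)}\lesssim \Vert Mf\Vert_{L^{p_0}(w)}$ for all $w\in A_\infty$ and all $0<p_0<\infty$. The point is that $p_0$ here is a \emph{free} parameter, not a fixed one, so the conclusion of Theorem~\ref{teoo3} may be read with $g=Tf$, $f$ replaced by $Mf$, and both $p_0$ and $\alpha$ at our disposal subject to $0\le\alpha\le p_0$.

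First I would fix $0<\mu<1$ and $0<q<\infty$ as in the statement, and then choose the parameters in Theorem~\ref{teoo3} so that the exponent $p_0-\alpha$ appearing on the integral term equals $q$: for instance take $p_0=q$ and $\alpha=0$ (or any $p_0\ge q$ with $\alpha=p_0-q$, which lands in the admissible range $0\le\alpha\le p_0$). Applying (\ref{dist3}) with $g=Tf$ and with ``$f$'' there equal to $Mf$ then yields
\begin{equation*}
\lambda^u_{Tf}(y)\lesssim \lambda^u_{M_r(Mf)}(y) + \frac{1}{y^{q}}\int_{\mathbb R^n}(Mf)^q(x)\,M_\mu(u\chi_{\{|Tf|>y\}})(x)\,dx,
\end{equation*}
for every $r>0$. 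It remains only to replace $\lambda^u_{M_r(Mf)}(y)$ by $\lambda^u_{Mf}(y)$. Here I would invoke the remark preceding the corollary: if the ``input'' function satisfies $h^\beta\in A_1$ for some $0<\beta<1$, then $M_r h$ may be replaced by $h$ in (\ref{dist3}). Since $Mf$ is known to satisfy $(Mf)^\beta\in A_1$ for every $0<\beta<1$ (this is the classical Coifman--Rochberg fact that powers of the maximal function less than $1$ are $A_1$ weights, with uniformly controlled constant), the substitution is legitimate with $h=Mf$, and we obtain the claimed inequality.

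The only genuinely delicate point is making the two uses of ``the maximal function is an $A_1$ weight'' interact correctly: one needs $M_r(Mf)\lesssim Mf$ pointwise up to the $A_1$ mechanism, i.e. that $v=(M_r(Mf))^{-\alpha}M_\mu(u\chi_{\{|Tf|>y\}})$ and $v'=(Mf)^{-\alpha}M_\mu(u\chi_{\{|Tf|>y\}})$ are comparable $A_\infty$ weights, which is exactly what the Coifman--Rochberg estimate $M((Mf)^s)\lesssim (Mf)^s$ for $0<s<1$ provides after unravelling the definition $M_r h=(M(h^{1/r}))^r$. I would present this as a one-line consequence of the cited remark rather than reprove it. A secondary bookkeeping issue is checking that the couple $(Mf,Tf)$ really does verify (\ref{debil1}) for \emph{every} $p_0\in(0,\infty)$ and not just $p_0\ge 1$; this is standard (the Coifman--Fefferman inequality holds for all positive exponents and all $A_\infty$ weights), but it is worth a sentence since the statement of Theorem~\ref{teoo3} as written has $1\le p_0$ implicitly in mind. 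Modulo these remarks the proof is essentially immediate.
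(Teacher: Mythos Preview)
Your proposal is correct and follows exactly the approach the paper has in mind: the paper does not give a separate proof, but simply notes (in the paragraph immediately preceding the corollary) that one applies Theorem~\ref{teoo3} to the couple $(Mf,Tf)$ from example~(i) and then uses the Coifman--Rochberg fact $(Mf)^\beta\in A_1$ to replace $\lambda^u_{M_r(Mf)}(y)$ by $\lambda^u_{Mf}(y)$. Your only additional contribution is making explicit the choice of parameters ($p_0\ge q$, $\alpha=p_0-q$) and flagging the minor bookkeeping about the range of $p_0$, which the paper leaves implicit.
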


All the previous expressions are very useful to prove the boundedness of $T$ on several function spaces.
Moreover, the fact that no condition is imposed on the function $u$ allows us to obtain boundedness results for two weights and off-diagonal which, as far as we know, are new.

\subsection{Multilinear case}

In this Section, we shall be dealing with extrapolation results for multilinear operators (see \cite{gm:gm}). In fact, the operator $T$ plays no role and hence everything could  be formulated for a triple $(f_1, f_2, g)$. 

We shall give a distribution formula in the case $1\le p_1\le p_2$, but a similar result can be proved if 
$1\le p_2\le p_1$.

\begin{theorem}\label{multi}
Let  $T$ be an operator such that
$$
T:L^{p_1}(w_1)\times L^{p_2}(w_2)\longrightarrow L^{p, \infty}(w)
$$
is bounded, for every $w_1\in A_{p_1}$ and every $w_2\in A_{p_2}$, with  $1\le p_1\le p_2$, 
$$
{1\over p}={1\over p_1}+ {1\over p_2}, \qquad w=w_1^{p/p_1} w_2^{p/p_2}.
$$
Then, for every $0<\mu<1$, every $v\in A_1$,  every $u_1$, $u_2$ and $u=u_1^{\nu_1} u_2^{\nu_2}$,
we have that 
\begin{eqnarray*}\label{distmult}
\lambda^u_{T(f_1, f_2)}(y)& \lesssim& \lambda^u_{M_{\rho}f_1  M_{\rho}f_2}(y)
\\
&\quad &+ {1\over y^{p/p_1}}\bigg[ \bigg(\int_{\mathbb R^n} f_1(x)  M_\mu(u_1^{\beta_1}u_2^{\beta_2}
\chi_{\{ |T(f_1, f_2)|>y   \} } )(x) dx\bigg)^{p/p_1}
\\
&\quad&\quad\times
\bigg(\int_{\mathbb R^n} f_2(x)^{p_2/p_1}  v^{s}M_\mu(v^{-s} u_1^{\gamma_1}u_2^{\gamma_2}\chi_{\{ |T(f_1, f_2)|>y   \} } )(x)dx\bigg)^{p/p_2}\bigg],
\end{eqnarray*}
where  $0<\rho< 1$, 
$
 s=(1-p_2)\Big(1-{p'_2\over p'_1}\Big), 
$
$
\beta_1{p\over p_1}+\gamma_1 {p\over p_2}={\nu_1}, 
$ and
$\beta_2{p\over p_1}+\gamma_2{p\over p_2}={\nu_2}$,  (if $p_1=p_2=1$, then ${p'_2\over p'_1}=1$ and     $s=0$.) 
\end{theorem}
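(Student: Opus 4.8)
The plan is to mimic the proof of Theorem~\ref{teo1}, but now splitting the level set using the \emph{product} $M_\rho f_1 \cdot M_\rho f_2$ as the majorant, and then feeding into the multilinear hypothesis a cleverly factored weight so that the two-weight/off-diagonal bookkeeping ($p\le p_1\le p_2$) works out. First I would write
\begin{eqnarray*}
\lambda^u_{T(f_1,f_2)}(y) &\le& \lambda^u_{M_\rho f_1 M_\rho f_2}(y)\\
&\quad& + u\big(\{x : |T(f_1,f_2)(x)|>y,\ M_\rho f_1(x)\,M_\rho f_2(x)\le y\}\big),
\end{eqnarray*}
and on the second set use the pointwise bound $1\le (y/(M_\rho f_1 M_\rho f_2))^{p/p_1}$ (this is the analogue of the factor $(y/Mf)^\alpha$ in Theorem~\ref{teo1}, with the exponent $p/p_1$ chosen to match the order of homogeneity of $T$ on the first variable). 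Then
$$
u(\{\cdots\}) \le \frac{1}{y^{p/p_1}}\, y^{p/p_1}\int_{\{|T(f_1,f_2)|>y\}} \big(M_\rho f_1\,M_\rho f_2\big)^{-p/p_1}\, M_\mu\big(u\chi_{\{|T(f_1,f_2)|>y\}}\big)(x)\,dx,
$$
after the standard replacement of $u\chi_{\{g>y\}}$ by its $M_\mu$-regularization (so that the weight to be constructed is actually an $A_{p_i}$ weight).

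Next I would construct the two weights to which the hypothesis is applied. Write $w_0 := M_\mu(u\chi_{\{|T(f_1,f_2)|>y\}})$ and decompose $u = u_1^{\nu_1}u_2^{\nu_2}$ so that $w_0$ factors (up to $M_\mu$) as $(u_1^{\beta_1}u_2^{\beta_2})\cdot(u_1^{\gamma_1}u_2^{\gamma_2})$ with the stated relations $\beta_i p/p_1 + \gamma_i p/p_2 = \nu_i$. The idea is to take
$$
w_1 = \big(M_\rho f_1\big)^{-1}\,M_\mu\big(u_1^{\beta_1}u_2^{\beta_2}\chi_{\{|T(f_1,f_2)|>y\}}\big),\qquad
w_2 = \big(M_\rho f_2\big)^{-p_2/p_1}\,v^{s}\,M_\mu\big(v^{-s}u_1^{\gamma_1}u_2^{\gamma_2}\chi_{\{|T(f_1,f_2)|>y\}}\big),
$$
with $v\in A_1$ and $s=(1-p_2)(1-p_2'/p_1')$. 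One checks that $w_1\in A_{p_1}$ (a power $-1<0$ of $M_\rho f_1$ against an $A_1$-type factor, exactly as in Theorem~\ref{teo1} with $\alpha=1<p_1$ when $p_1>1$, and as in the $p_0=1$ case of Theorem~\ref{teo2} when $p_1=1$), and that $w_2\in A_{p_2}$: here the exponent $-p_2/p_1$ on $M_\rho f_2$ lies in $[-1,0]$ hence below $p_2-1$, and the factor $v^s$ is inserted precisely so that, after raising to powers $p/p_1$ and $p/p_2$ and multiplying, the product $w_1^{p/p_1}w_2^{p/p_2}$ recovers $(M_\rho f_1 M_\rho f_2)^{-p/p_1}\,w_0$ up to the harmless $v^s$ cancellation (this is why $s$ is chosen as above — it is the Hölder exponent that makes $w = w_1^{p/p_1}w_2^{p/p_2}$ come out right). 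Applying the boundedness hypothesis of $T$ to this pair and using Hölder's inequality on $\mathbb R^n$ with exponents $p_1/p$ and $p_2/p$ (note $1/p = 1/p_1 + 1/p_2$) to split $\int f_1 f_2^{p_2/p_1}\cdots$ gives the two-factor product on the right-hand side, each factor carrying the correct power $p/p_1$ or $p/p_2$.

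The main obstacle is the bookkeeping for the weight $w_2$, i.e. verifying that the exotic combination $(M_\rho f_2)^{-p_2/p_1}v^{s}M_\mu(v^{-s}u_1^{\gamma_1}u_2^{\gamma_2}\chi)$ genuinely lies in $A_{p_2}$ with constant controlled only by $\rho$, $\mu$, $\|v\|_{A_1}$ and the exponents. The delicate point is that $v^s$ with $s<0$ (when $p_2>1$) is \emph{not} generally in $A_\infty$, so one cannot just multiply $A_\infty$ factors; instead one must use that $v\in A_1$ together with the reverse relation between $A_{p_2}$ membership of $v^{1-p_2}$-type objects, and track how the $M_\mu$-regularization interacts with the $v^{-s}$ inside. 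Concretely one has to show the $A_{p_2}$ characteristic of $w_2$ is finite by estimating $\big(\fint_Q w_2\big)\big(\fint_Q w_2^{-1/(p_2-1)}\big)^{p_2-1}$, peeling off $(M_\rho f_2)^{-p_2/p_1}$ via the $A_1$/$A_\infty$ theory for powers of maximal functions (as in Theorem~\ref{teo1}) and handling the $v^s M_\mu(v^{-s}\cdots)$ block by the $A_1$ property of $v$ and boundedness of $M_\mu$. Everything else — the initial splitting of the level set, the pointwise $(y/\cdot)^{p/p_1}$ trick, the substitution of $u\chi$ by $M_\mu(u\chi)$, and the final Hölder step — is routine and parallels the linear proofs above; the degenerate case $p_1=p_2=1$ (so $s=0$, $p_2'/p_1'=1$) reduces to the elementary estimate used in the $p_0=1$ branch of Theorem~\ref{teo2}.
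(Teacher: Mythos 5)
Your high-level strategy is exactly the paper's: split the level set by the product $M_\rho f_1\cdot M_\rho f_2$, insert a pointwise factor $(y/(M_\rho f_1 M_\rho f_2))^\alpha$, regularize the indicator with $M_\mu$, factor the resulting weight as $w_1^{p/p_1}w_2^{p/p_2}$ with $w_1\in A_{p_1}$, $w_2\in A_{p_2}$, apply the weak-type hypothesis, and simplify by $f_i\le M_\rho f_i$. However, you chose the wrong exponent $\alpha$: you take $\alpha = p/p_1$, whereas the correct (and the paper's) choice is $\alpha = p/p_1'$, so that $p-\alpha = p/p_1$. This single slip propagates into several concrete errors. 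First, after the hypothesis is applied, the power of $y$ you would obtain is $y^{\alpha}\cdot y^{-p}=y^{-p/p_1'}$, not the $y^{-p/p_1}$ in the statement. Second, your $w_1 = (M_\rho f_1)^{-1}M_\mu(\cdots)$ replaces the paper's $(M_\rho f_1)^{1-p_1}M_\mu(\cdots)$; the latter is in $A_{p_1}$ for every $\rho<1$ by the factorization theorem since $M_\rho f_1 \in A_1$, but $(M_\rho f_1)^{-1}=\big[(M_\rho f_1)^{1/(p_1-1)}\big]^{1-p_1}$ requires $\rho/(p_1-1)<1$, which fails for small $p_1-1$ and so does not hold for every $\rho<1$. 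Third, the reduction $\int f_1^{p_1}w_1\,dx \lesssim \int f_1\, M_\mu(\cdots)\,dx$ uses $f_1^{p_1-1}\le (M_\rho f_1)^{p_1-1}$, i.e.\ the exponent on $M_\rho f_1$ must be $1-p_1$, not $-1$; with your $w_1$ you only get $f_1^{p_1-1}M_\mu(\cdots)$, which is the wrong power of $f_1$.

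Your bookkeeping for $w_2$ also goes astray. You assert the exponent $-p_2/p_1$ on $M_\rho f_2$ lies in $[-1,0]$, but since $p_1\le p_2$ we have $p_2/p_1\ge 1$ and hence $-p_2/p_1\le -1$, so the claim is backwards; in any case the correct exponent is $-p_2/p_1'$. You also leave the crucial $A_{p_2}$ verification as a "delicate point" without a mechanism. The paper resolves it by the algebraic identity
$$
(M_{\rho}f_2)^{-p_2/p'_1}\, v^{s}\,M_\mu(\cdots)
=\Big[(M_{\rho}f_2)^{p'_2/ p'_1 } v^{1-p'_2/p'_1}\Big]^{1-p_2 }M_\mu(\cdots),
$$
which works precisely because $s=(1-p_2)(1-p_2'/p_1')$ and $p_1\le p_2$ implies $p_2'/p_1'\in[0,1]$, so the bracketed factor is a convex combination of the two $A_1$ weights $M_\rho f_2$ and $v$ and hence in $A_1$; this, together with $M_\mu(\cdots)\in A_1$, gives $A_{p_2}$ by factorization. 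You should redo the computation with $\alpha = p/p_1'$, replicating this identity; everything else in your outline (the $M_\mu$-insertion, the $u = u_1^{\nu_1}u_2^{\nu_2}$ splitting via the $\beta_i,\gamma_i$ relations, and the final simplifications) is correct and matches the paper.
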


\begin{proof} Let $g=T(f_1, f_2)$. Then, for every $\alpha\ge 0$ 

\begin{eqnarray*}
\lambda^u_{g}(y)&\le& \lambda^u_{M_{\rho}f_1 M_{\rho}f_2}(y)+u(\{x; g(x)>y, \ M_{\rho}f_1(x) M_{\rho}f_2(x)\le y\}) 
\\
&=& 
 \lambda^u_{M_{\rho}f_1 M_{\rho}f_2}(y)+\int_{\{ g(x)>y, M_{\rho}f_1(x) M_{\rho}f_2(x)\le y\}} u(x) dx
 \\
& \le&  \lambda^u_{M_{\rho}f_1M_{\rho}f_2}(y)+\int_{\{ g(x)>y \}} \Big ({ y\over M_{\rho}f_1(x) M_{\rho}f_2(x)}\Big)^\alpha u(x) dx
 \\
& \le &
 \lambda^u_{M_{\rho}f_1 M_{\rho}f_2}(y)
 \\
 &+&{y^{p}\over y^{p-\alpha}} \int_{\{ g>y \}}\frac { v^{{sp\over p_2}} \big(M_\mu( u_1^{\beta_1}u_2^{\beta_2} \chi_{\{ g>y\}})\big)^{p/p_1}\big(
 M_\mu( v^{-s}u_1^{\gamma_1}u_2^{\gamma_2} \chi_{\{ g>y\}})\big)^{p/p_2}}{(M_{\rho}f_1)^{\alpha}(M_{\rho}f_2)^{\alpha}}\, dx.
 \end{eqnarray*}
 
 Now, if we take $\alpha={p\over p'_1}$, we have that  
 $$ 
 v_1(x) =(M_{\rho}f_1(x))^{-\alpha p_1\over p} M_\mu(  u_1^{\beta_1}u_2^{\beta_2} \chi_{\{ g>y\}})(x)\in A_{p_1},
 $$ 
 and also by definition of $s$, 
 \begin{eqnarray*}
 v_2(x) &=&(M_{\rho}f_2(x))^{-\alpha p_2\over p} v(x)^{{s}}M_\mu( v^{-s} u_1^{\gamma_1}u_2^{\gamma_2 }\chi_{\{ g>y\}})(x)
 \\
&=&
 \Big( (M_{\rho}f_2(x))^{-p_2\over p'_1 (1-p_2)} v(x)^{1-\frac{p'_2}{p'_1}}\Big)^{1-p_2 }M_\mu( v^{-s} u_1^{\gamma_1}u_2^{\gamma_2 }\chi_{\{ g>y\}})(x)
  \\
&=&
 \Big[(M_{\rho}f_2(x))^{p'_2\over p'_1 } v(x)^{1-\frac{p'_2}{p'_1}}\Big]^{1-p_2 }M_\mu( v^{-s} u_1^{\gamma_1}u_2^{\gamma_2 }\chi_{\{ g>y\}})(x)\in A_{p_2},
\end{eqnarray*}
since clearly the weight between brackets is in $A_1$. Hence,   we can apply the hypothesis to get
 
\begin{eqnarray*}
\lambda^u_g(y)\!
&\le&\! \lambda^u_{M_{\rho}f_1 M_{\rho}f_2}(y) 
\\
&\quad&+
 {1\over y^{p-\alpha}} \bigg[ \bigg( \int_{\mathbb R^n} f_1^{p_1}(x)  (M_{\rho}f_1(x))^{1-p_1} M_\mu(  u_1^{\beta_1}u_2^{\beta_2} \chi_{\{ g>y\}})(x) dx\bigg)^{p/p_1}
\\
&\quad&\quad\times
\bigg( \int_{\mathbb R^n} f_2^{p_2}(x)  (M_{\rho}f_2(x))^{- p_2\over p'_1} v^{{s}}(x) M_\mu(  v^{-s}u_1^{\gamma_1}u_2^{\gamma_2} \chi_{\{ g>y\}})(x) dx\bigg)^{p/p_2}\bigg],
\end{eqnarray*}
from which the result follows. 
\end{proof}

 \subsection{Two weights case} 
 
 Concerning the boundedness of the Hardy-Littlewood maximal operator with two weights, it is known that, if $p\ge 1$, then
 $$
 M:L^{p}(u)\longrightarrow L^{p,\infty}(v),
 $$
 if and only if $(u,v)\in A_p$; that is,
\begin{equation*}\label{app}
\Vert (u,v)\Vert _{A_p}=\bigg( {1\over |Q|} \int_Q v \bigg)\bigg( {1\over |Q|}\int_Q u^{1-p'}\bigg)^{p-1}<\infty,
 \end{equation*}
and that this condition is not sufficient for the strong boundedness. 
 
 In fact (see \cite{Duo}), it is very easy to see that the couple
\begin{equation}\label{label}
 ((Mw_0) w_1^{1-p}, w_0 (Mw_1)^{1-p})\in A_p,
 \end{equation}
 for every locally integrable functions $w_0$ and $w_1$, and hence if the $A_p$ condition were sufficient for the strong boundedness, we would have that, for every  locally integrable $w$
 $$
 \int_{\mathbb R^n} Mf(x)^p (Mw)^{1-p}(x)  (x) \le  \int_{\mathbb R^n} f(x)^p  w^{1-p} (x) dx,
 $$
 which, taking $w=f$, would imply that  $M:L^1\longrightarrow L^1$, which is a contradiction. 
 
 Observe that by changing, for example, $u$ by $u/\Vert (u,v)\Vert _{A_p}$ we can always assume that $\Vert (u,v)\Vert _{A_p}=1$. 
 
 In \cite{n:n}, some extrapolation results were proved for weights $(u,v)\in A_p$. The distribution formula in this case is the following. 
 
 \begin{theorem} \label{teo2pesos} Let $f$ and $g$ be two positive functions such that, for some  $1\le p_0<\infty$ and  for every 
 $(u_1,u_2)\in {A_{p_0}}$, with $\Vert (u_1,u_2)\Vert _{A_{p_0}}=1$, 
 $$
 \Vert g\Vert _{L^{p_0, \infty}(u_2)}\le  \Vert f\Vert _{L^{p_0}(u_1)}.
 $$
Then, for every  positive locally integrable function $u$, 
\begin{equation}\label{dist5}
\lambda^u_g(y)\le c_{p_0}\lambda^u_{Mf}(y) + {1\over y} \int_{\mathbb R^n} f(x) M (u \chi_{\{ g>y\}})(x) dx,
\end{equation}
and $c_{p_0}=1$ if $p_0>1$ and 0 if $p_0=1$.
\end{theorem}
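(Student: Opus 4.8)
The strategy mirrors the proofs of Theorems~\ref{teo1} and~\ref{teo2}: split the level set $\{g>y\}$ according to whether $Mf$ is large or small, and on the ``small'' part introduce a power of $y/Mf(x)$ in order to manufacture an auxiliary pair of weights in $A_{p_0}$ to which the two-weight hypothesis applies. Concretely, I would first write
\begin{eqnarray*}
\lambda^u_g(y) &\le& \lambda^u_{Mf}(y) + u(\{x; g(x)>y,\ Mf(x)\le y\})\\
&\le& \lambda^u_{Mf}(y) + {1\over y^{p_0}} y^{p_0}\int_{\{g>y\}}\Big({y\over Mf(x)}\Big)^{\alpha} u(x)\,dx,
\end{eqnarray*}
valid for every $\alpha\ge 0$, and then replace $u\chi_{\{g>y\}}$ by $M(u\chi_{\{g>y\}})$, which dominates it pointwise. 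Since the hypothesis of Theorem~\ref{teo2pesos} is normalized to $\Vert(u_1,u_2)\Vert_{A_{p_0}}=1$, the key point is to exhibit a \emph{pair} $(u_1,u_2)$ with the correct $A_{p_0}$-norm rather than a single $A_{p_0}$ weight.

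The construction of that pair is where the identity~(\ref{label}) enters. For $p_0>1$ I would take $\alpha=p_0-1$ (this is why only $M$, not $M_\delta$, is needed, and why $c_{p_0}=1$) and set, roughly,
$$
u_2(x) = (Mf(x))^{1-p_0}\, M(u\chi_{\{g>y\}})(x),\qquad u_1(x) = f(x)^{p_0-1}\, M(u\chi_{\{g>y\}})(x),
$$
so that $u_2 = (Mf)^{1-p_0}\,M(h)$ and $u_1^{1-p_0'} = f\,M(h)^{1-p_0'}$ with $h=u\chi_{\{g>y\}}$; comparing with~(\ref{label}) (with the roles $w_1=f$, $w_0=M(h)$, and using $M(M(h))\lesssim M(h)$ by the reverse inequality, or simply $M(h)^{1-p_0'}\le (Mh)^{1-p_0'}$) shows $(u_1,u_2)\in A_{p_0}$ with a \emph{bounded} norm, after which one normalizes by dividing by that constant. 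Applying the hypothesis to this pair gives
$$
y^{p_0}\int_{\{g>y\}} (Mf)^{-\alpha} M(h)\, dx \lesssim \int_{\mathbb R^n} f(x)^{p_0} (Mf(x))^{1-p_0} M(h)(x)\, dx \le \int_{\mathbb R^n} f(x)\, M(h)(x)\, dx,
$$
using $f\le Mf$ pointwise in the last step; dividing through by $y^{p_0-\alpha}=y$ yields~(\ref{dist5}).

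For the endpoint case $p_0=1$ one argues as in the second half of the proof of Theorem~\ref{teo2}: no splitting off of $\lambda^u_{Mf}$ is needed (hence $c_{p_0}=0$), and directly
$$
\lambda^u_g(y) = u(\{g>y\}) \le {y\over y}\int_{\{g>y\}} M(u\chi_{\{g>y\}})(x)\, dx,
$$
where the right-hand integral is $\Vert g\Vert_{L^{1,\infty}(M(h))}$-type data controlled by the hypothesis with $u_1=u_2=M(h)$ (the pair $(M(h),M(h))$ has $A_1$-norm comparable to $1$ by the weak $(1,1)$ bound for $M$, i.e.\ $M(M h)\lesssim Mh$ fails in general but the relevant $A_1$ quantity $\fint_Q Mh \big/ \essinf_Q Mh$ is bounded), giving $\lambda^u_g(y)\lesssim {1\over y}\int f\, M(h)\,dx$ after normalization.

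\textbf{Main obstacle.} The only delicate point is verifying that the constructed pair $(u_1,u_2)$ genuinely lies in $A_{p_0}$ with a norm bounded independently of $y$, $f$, and $u$ — that is, carrying out the cube-by-cube estimate for $\big(\fint_Q u_2\big)\big(\fint_Q u_1^{1-p_0'}\big)^{p_0-1}$ using the elementary two-weight $A_{p_0}$ identity~(\ref{label}) together with the pointwise bound $M(\,\cdot\,)\le$ its own maximal function and the fact that $f\le Mf$. Everything else is the now-familiar level-set splitting. One should also note that, exactly as in Theorem~\ref{teo2}, the reduction at the start of Section~\ref{S:main} lets us assume $g$ bounded with compact support, so that $\lambda^u_g(y)<\infty$ and the manipulations above are legitimate, and if $M(u\chi_{\{g>y\}})=\infty$ on a set of positive measure the inequality is trivial.
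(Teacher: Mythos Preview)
Your overall strategy---split the level set, insert the factor $(y/Mf)^{p_0-1}$, and invoke the two-weight hypothesis via a pair built from~(\ref{label})---is exactly the paper's. But your execution has a genuine gap: you replace $u\chi_{\{g>y\}}$ by $M(u\chi_{\{g>y\}})$ \emph{in the weight $u_2$}, and this is both unnecessary and harmful. With $h=u\chi_{\{g>y\}}$, your pair $u_2=(Mf)^{1-p_0}Mh$ does \emph{not} match the template~(\ref{label}) unless you take $w_0=Mh$, which forces $u_1=M(Mh)\,f^{1-p_0}$ and leads to $\int f\,M^2h$ on the right, not $\int f\,Mh$. Your attempted fix, ``$M(Mh)\lesssim Mh$,'' is simply false in general ($M^2\not\approx M$). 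The same problem recurs in your $p_0=1$ argument: the pair $(Mh,Mh)$ is \emph{not} in $A_1$, since that would again require $M(Mh)\lesssim Mh$. (There is also a sign slip---your stated $u_1=f^{p_0-1}Mh$ gives $u_1^{1-p_0'}=f^{-1}(Mh)^{1-p_0'}$, not $f(Mh)^{1-p_0'}$---and your final displayed inequality uses yet a third $u_1$; but these are symptoms of the same confusion.)

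The fix is to \emph{not} majorize $h$ by $Mh$ in $u_2$. Take $w_0=h$ and $w_1=f$ in~(\ref{label}) directly, so that
\[
u_2=h\,(Mf)^{1-p_0}=u\chi_{\{g>y\}}(Mf)^{1-p_0},\qquad u_1=(Mh)\,f^{1-p_0}=M(u\chi_{\{g>y\}})\,f^{1-p_0},
\]
and $(u_1,u_2)\in A_{p_0}$ with universally bounded norm. Then $y^{p_0}\int_{\{g>y\}}(Mf)^{1-p_0}h=y^{p_0}\lambda_g^{u_2}(y)\le\int f^{p_0}u_1=\int f\,M(u\chi_{\{g>y\}})$, which is precisely~(\ref{dist5}). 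For $p_0=1$ the same choice $w_0=h$ gives the pair $(Mh,h)\in A_1$ with norm~$1$, and the inequality follows in one line. The moral: $Mh$ should appear only in $u_1$, arising from~(\ref{label}), not be inserted by hand in $u_2$.
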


\begin{proof} Using (\ref{label}) we have that if $p_0>1$, 
\begin{eqnarray*}
\lambda^u_g(y)&\le & u(\{x; Mf(x)>y\}) + u(\{x; g(x)>y, Mf(x)\le y\})
\\
&\le & 
\lambda_{Mf}^u(y)+  \int_{\{ g(x)>y \}} \bigg({y\over Mf(x)}\bigg)^{p_0-1}u \chi_{\{ g>y\}}(x) dx
\\
&\le&
\lambda_{Mf}^u(y)+{y^{p_0}\over y}  \int_{\{ g(x)>y \}} Mf(x)^{1-p_0}u \chi_{\{ g>y\}}(x) dx
\\
&\le&
\lambda_{Mf}^u(y)+{1\over y}  \int_{\mathbb R^n} f^{p_0}(x) f(x)^{1-p_0}M(u \chi_{\{ g>y\}})(x) dx
\\
&=&
\lambda_{Mf}^u(y)+{1\over y}  \int_{\mathbb R^n} f(x) M(u \chi_{\{ g>y\}})(x) dx. 
 \end{eqnarray*}
  The case $p_0=1$ is completely similar. 
\end{proof}

\section{Applications}\label{S:Appl} 

We use the estimates shown in Section~\ref{S:main} to give 
very direct proofs of some already known results, including the (weak type) Rubio de 
Francia's extrapolation theorem, and to prove new ones in the setting of two 
weights inequalities.

\subsection{Rubio de Francia's extrapolation results:}

\ 

\ 

\centerline{\bf Extrapolation for $A_{p}$ weights}

\

\begin{theorem}\label{rf1} Let $f$ and $g$ be two positive functions such that (\ref{debil1})  holds for some  $1\le p_0<\infty$ and  for every $w\in A_{p_0}$. Then, for every $1<p<\infty$ and   every $w\in A_{p}$,  \begin{equation}
\label{pp}
\Vert g\Vert _{L^{p, \infty}(w)}\le C_w \Vert f\Vert _{L^{p}(w)}. 
\end{equation}

\end{theorem}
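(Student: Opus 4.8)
The plan is to use the distribution-function estimate \eqref{dist2} from Theorem~\ref{teo2} (which covers both $p_0>1$ and $p_0=1$ simultaneously, via $M_\delta f$ and the constant $c_{p_0}$) and integrate it against $y$ to recover the weak-type bound. Fix $w\in A_p$ with $1<p<\infty$. By the approximation argument at the start of Section~\ref{S:main} we may assume $g$ is bounded with compact support, so that $\lambda^w_g(y)<\infty$ for all $y>0$. Starting from
\[
\Vert g\Vert_{L^{p,\infty}(w)}^p=\sup_{y>0} y^p \lambda^w_g(y),
\]
I would plug in \eqref{dist2} with $u=w$, for suitable parameters $\delta,\mu\in(0,1)$ to be chosen in terms of $\Vert w\Vert_{A_p}$, obtaining
\[
y^p\lambda^w_g(y)\lesssim c_{p_0}\, y^p\lambda^w_{M_\delta f}(y)+y^{p-1}\int_{\mathbb R^n} f(x)\, M_\mu\big(w\chi_{\{g>y\}}\big)(x)\,dx.
\]

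For the first term, when $p_0>1$ I choose $\delta$ small enough (depending only on $p$) that $M_\delta$ is bounded on $L^p(w)$ for all $w\in A_p$ — this holds precisely when $1/\delta<p$, i.e. $\delta>1/p$, since $M_\delta f=(M(f^{1/\delta}))^\delta$ and $M$ is bounded on $L^{p/ (1/\delta)}=L^{p\delta}(w)$... more carefully, $M(f^{1/\delta})\in L^{p\delta}(w)$ requires $p\delta>1$, so any $1/p<\delta<1$ works. Then
\[
y^p\lambda^w_{M_\delta f}(y)\le \Vert M_\delta f\Vert_{L^{p,\infty}(w)}^p\le \Vert M_\delta f\Vert_{L^p(w)}^p\lesssim \Vert f\Vert_{L^p(w)}^p,
\]
with constant depending only on $\Vert w\Vert_{A_p}$. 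When $p_0=1$ this term is absent. The main work is the second term: I would use Hölder's inequality with exponents $p$ and $p'$,
\[
y^{p-1}\int f\, M_\mu(w\chi_{\{g>y\}})\le y^{p-1}\Vert f\Vert_{L^p(w)}\bigg(\int \big(M_\mu(w\chi_{\{g>y\}})\big)^{p'} w^{1-p'}\bigg)^{1/p'}.
\]
The key point is that $M_\mu$ is bounded from $L^{p'}(w^{1-p'})$ to itself whenever $w^{1-p'}\in A_{p'}$, which is exactly the $A_p$ duality $w\in A_p\iff w^{1-p'}\in A_{p'}$, again provided $\mu$ is chosen close enough to $1$ depending on $\Vert w\Vert_{A_p}$ (so that $M(\cdot^{1/\mu})$ acts on $L^{p'\mu}(w^{1-p'})$ with $p'\mu>1$, which forces $\mu>1/p'=1-1/p$). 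Applying this,
\[
\bigg(\int \big(M_\mu(w\chi_{\{g>y\}})\big)^{p'} w^{1-p'}\bigg)^{1/p'}\lesssim \bigg(\int (w\chi_{\{g>y\}})^{p'} w^{1-p'}\bigg)^{1/p'}=\bigg(\int_{\{g>y\}} w\bigg)^{1/p'}=\lambda^w_g(y)^{1/p'}.
\]

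Combining, I get
\[
y^p\lambda^w_g(y)\lesssim \Vert f\Vert_{L^p(w)}^p + y^{p-1}\Vert f\Vert_{L^p(w)}\,\lambda^w_g(y)^{1/p'},
\]
and here is where the finiteness \eqref{finito} is essential: writing $A(y)=y^p\lambda^w_g(y)<\infty$, the inequality reads $A(y)\lesssim \Vert f\Vert_{L^p(w)}^p+\Vert f\Vert_{L^p(w)}\,A(y)^{1/p'}$, since $y^{p-1}\lambda^w_g(y)^{1/p'}=(y^p\lambda^w_g(y))^{1/p'}=A(y)^{1/p'}$. As $1/p'<1$, Young's inequality absorbs the $A(y)^{1/p'}$ term into the left side — e.g. $\Vert f\Vert_{L^p(w)} A(y)^{1/p'}\le \tfrac12 A(y)+C\Vert f\Vert_{L^p(w)}^p$ — yielding $A(y)\lesssim \Vert f\Vert_{L^p(w)}^p$ uniformly in $y$, hence \eqref{pp}. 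The main obstacle, and the only place care is needed, is the bookkeeping of the parameters $\delta,\mu$: they must be chosen strictly between $1/p$ (resp.\ $1-1/p$) and $1$, and the constants in the Buckley-type bounds for $M_\delta$ on $L^p(w)$ and $M_\mu$ on $L^{p'}(w^{1-p'})$ must be tracked so the final constant depends only on $\Vert w\Vert_{A_p}$; once one observes that the absorption step legitimately uses $A(y)<\infty$, the rest is routine.
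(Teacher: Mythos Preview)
Your proof is essentially the paper's: same use of \eqref{dist2} with $u=w$, same H\"older step followed by the boundedness of $M_\mu$ on $L^{p'}(w^{1-p'})$ via the duality $w\in A_p\Leftrightarrow w^{1-p'}\in A_{p'}$ and self-improvement, and the same absorption argument using \eqref{finito}. One small correction: your claim that \emph{any} $\delta\in(1/p,1)$ makes $M_\delta$ bounded on $L^p(w)$ for \emph{all} $w\in A_p$ is not right, since boundedness of $M$ on $L^{p\delta}(w)$ requires $w\in A_{p\delta}\subsetneq A_p$ when $\delta<1$; just as you (correctly) do for $\mu$, the parameter $\delta$ must be chosen close to $1$ depending on $\|w\|_{A_p}$ via self-improvement---the paper phrases this as ``there exists $0<\delta<1$ such that $w\in A_{\delta p}$''.
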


\begin{proof} Let us prove (\ref{pp}) by using Theorem \ref{teo2}. By (\ref{dist2}) we have, for  $u=w\in A_p$, 
\begin{eqnarray*}
\lambda^w_g(y)&\lesssim&  \lambda^w_{M_\delta f}(y) + {1\over y} \int_{\mathbb R^n} f(x) M_{\mu} (w \chi_{\{ g>y\}})(x) dx
\\
&\le&
 \lambda^w_{M_\delta f}(y) 
+ {1\over y} \bigg( \int_{\mathbb R^n} f^p w dx\bigg)^{1/p} 
 \bigg( \int_{\mathbb R^n} M_{\mu} ( w\chi_{\{ g>y\}})^{p'} w^{1-p'}dx\bigg)^{1/p'}.
\end{eqnarray*}
Now, if $w\in A_p$,  $w^{1-p'}\in A_{p'}$ and hence, we can take $0<\mu<1$ so that  $w^{1-p'}\in A_{\mu p'}$. Therefore, we can estimate the last term by
\begin{eqnarray*}
& &\bigg( \int_{\mathbb R^n} M_{\mu} ( w\chi_{\{ g>y\}})^{p'} w^{1-p'}dx\bigg)^{1/p'}
\lesssim \bigg( \int_{{\{ g>y\}}} w (x) dx\bigg)^{1/p'},
\end{eqnarray*}
and thus, since there exists $0<\delta <1$ such that $w\in A_{\delta p}$, 
\begin{eqnarray*}
& & \lambda^w_g(y)\lesssim {1\over y^p}  \Vert f\Vert _{L^p(w)}^p +{1\over y^{p}}  \Vert f\Vert _{L^p(w)}\bigg(y^p \int_{{\{ g>y\}}} w (x) dx\bigg)^{1/p'}.
\end{eqnarray*}
Then, 
\begin{eqnarray*}
 y^p \lambda^w_g(y)&\lesssim& \Vert f\Vert _{L^p(w)}^p  +  \Vert f\Vert _{L^p(w)}\bigg(y^p  \lambda^w_g(y)\bigg)^{1/p'},
\end{eqnarray*}
from which it  follows, recalling that we may assume $g$ bounded with compact support and using  (\ref{finito}),   that
$ y  \lambda^w_g(y)^{1/p}\lesssim  \Vert f\Vert _{L^p(w)}$ as we wanted to see. 
\end{proof}

From Theorems \ref{teo1} and \ref{rf1}, we have the following.

\begin{corollary} \label{conclu} Let $f$ and $g$ satisfy (\ref{debil1})  for every $w\in A_{p_0}$, with $1< p_0<\infty$. 
 Then, for every $s>1$,  every $0<\mu<1$ and every  locally integrable positive function $u$
\begin{equation}\label{distF}
\lambda^u_g(y)\lesssim  \lambda^u_{Mf}(y) + {1\over y^{s}} \int_{\mathbb R^n} f^{s}(x) M_{\mu} (u \chi_{\{ g>y\}})(x) dx.
\end{equation}

\end{corollary}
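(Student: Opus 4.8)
The plan is to derive \eqref{distF} from \eqref{dist2} of Theorem~\ref{teo2} in exactly the same way that Theorem~\ref{rf1} is derived from it, only now carrying the auxiliary cutoff $M_\mu(u\chi_{\{g>y\}})$ along instead of discarding it into an $L^{p'}(w)$-pairing. Since $s>1$, I would apply Theorem~\ref{rf1} first: the hypothesis \eqref{debil1} for $w\in A_{p_0}$ yields, for every $w\in A_s$, the weak-type bound $\Vert g\Vert_{L^{s,\infty}(w)}\le C_w\Vert f\Vert_{L^s(w)}$. In other words, the pair $(f,g)$ again satisfies a hypothesis of the same form as \eqref{debil1} but now with exponent $s>1$ in place of $p_0$.

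Next I would feed this into Theorem~\ref{teo1} with the roles relabeled: apply Theorem~\ref{teo1} to the pair $(f,g)$ with $p_0$ replaced by $s$ (legitimate since $s>1$) and with the parameter $\alpha$ chosen to be $0$. With $\alpha=0$, the conclusion of Theorem~\ref{teo1} reads precisely
$$
\lambda^u_g(y)\lesssim \lambda^u_{Mf}(y) + \frac{1}{y^{s}}\int_{\mathbb R^n} f^{s}(x)\, M_\mu(u\chi_{\{g>y\}})(x)\,dx,
$$
for every $0<\mu<1$ and every positive locally integrable $u$, which is exactly \eqref{distF}. So the corollary is really just the composition ``Theorem~\ref{teo2} (or \eqref{debil1}) $\Rightarrow$ Theorem~\ref{rf1} $\Rightarrow$ Theorem~\ref{teo1} with $\alpha=0$''; the $M_\mu$ term is untouched because Theorem~\ref{teo1} already produces it.

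The only genuine point requiring care — the ``main obstacle'' such as it is — is checking that one may legitimately re-enter Theorem~\ref{teo1} with base exponent $s$: that theorem requires the weak-type inequality \eqref{debil1} at exponent $s$ for \emph{all} $w\in A_s$ with constant controlled by a locally bounded function of $\Vert w\Vert_s$, and this is exactly what the proof of Theorem~\ref{rf1} delivers (the constant $C_w$ there depends only on $\Vert w\Vert_{A_s}$ through the various $A_p$/$A_{p'}$ constants used, hence is locally bounded in $\Vert w\Vert_s$). One should also note the hypothesis $1<p_0<\infty$ in the corollary is what lets us invoke Theorem~\ref{rf1} in the range $s>1$ and then Theorem~\ref{teo1}, and that the condition $0\le\alpha<s-1$ needed in Theorem~\ref{teo1} is trivially met by $\alpha=0$. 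Everything else is the routine bookkeeping already carried out in the two cited proofs, so I would simply write: ``It suffices to apply Theorem~\ref{teo1}, with $p_0$ replaced by $s$ and $\alpha=0$, to the pair $(f,g)$, which satisfies \eqref{debil1} at exponent $s$ by Theorem~\ref{rf1}.''
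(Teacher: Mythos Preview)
Your argument is correct and is exactly the approach the paper takes: it states the corollary as an immediate consequence of Theorems~\ref{teo1} and~\ref{rf1}, i.e., use Theorem~\ref{rf1} to upgrade the hypothesis to exponent $s$ and then apply Theorem~\ref{teo1} with $p_0$ replaced by $s$ and $\alpha=0$. Your opening reference to Theorem~\ref{teo2} is unnecessary (and slightly misleading), since the proof only needs Theorems~\ref{rf1} and~\ref{teo1}; otherwise the write-up is fine.
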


We can also obtain a new boundedness result for two weights even in the off-diagonal case. 

\begin{definition} Given $0<p,q<\infty$, we  say that a pair of weights $(u,v)\in A_{p, q}$ if 
$$
M: L^p(u)\longrightarrow L^{q, \infty} (v),
$$
and we say that 
$(u,v)\in S_{p, q}$ if 
$$
M: L^p(u)\longrightarrow L^{q} (v). 
$$
If $p=q$ we will write $S_p=S_{p,p}$.
\end{definition}

\begin{remarks}
  Regarding conditions $A_{p, q}$ and $S_{p,q}$, the following facts are known (see \cite{CRS}):

\begin{enumerate}
\item[(a)]  If $(u,v)\in A_{p, q}$, then
\begin{enumerate}
\item[{(a.1)}] $u\notin L^1$.

\noindent
\item[{(a.2)}] $p\ge 1$.

\noindent
\item[{(a.3)}] If $u=v$, then $p=q$. 

\noindent
\item[{(a.4)}]
$
\Vert u^{-1} \chi_Q\Vert _{L^{p'}(u)}\Vert  \chi_Q\Vert _{L^{q}(v)}\lesssim |Q|.
$

\noindent
\item[{(a.5)}] The case  $p=1$ and $q<1$ was characterized by Lai (see \cite{lai}).
\end{enumerate}
\noindent
\item[{(b)}] 
If $p<1$ and $0<q<\infty$, then $A_{p, q}=\emptyset$.
\noindent
\item[{(c)}] 
The condition $(v^{1-q'}, u^{1-p'})\in S_{\mu q', \mu p'}$ was characterized by Sawyer in the range $1<p\le q<\infty$ and $\mu q'>1$ (see \cite{s:s}). 
\item[{(d)}] Other sufficient conditions for $(u,v)\in S_p$  can be found in \cite{pe4}.

\end{enumerate}
\end{remarks}

\begin{theorem} \label{rfdp} Let $f$ and $g$ be two positive functions such that (\ref{debil1})  holds for some  $1\le p_0<\infty$ and every $w\in A_{p_0}$,  and let $1< p, q<\infty$.  If
 $(u,v)\in A_{\delta p, \delta q}$ for some $0<\delta<1$ and $(v^{1-q'}, u^{1-p'})\in S_{\mu q', \mu p'}$ for some  $0<\mu<1$, 
then
$$
\Vert g\Vert _{L^{q,\infty}(v)} \lesssim \Vert f\Vert _{L^p(u)}.
$$
\end{theorem}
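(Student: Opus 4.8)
The plan is to use the distribution function estimate (\ref{dist2}) from Theorem~\ref{teo2} with the weight $u=v$, and then control each of the two resulting terms using the two hypotheses on the pairs of weights. Starting from
$$
\lambda^v_g(y) \lesssim c_{p_0}\lambda^v_{M_{\delta}f}(y) + {1\over y} \int_{\mathbb R^n} f(x) M_{\mu} (v \chi_{\{ g>y\}})(x)\, dx,
$$
I would treat the first term directly: since $(u,v)\in A_{\delta p,\delta q}$ means $M_\delta:L^{p}(u)\to L^{q,\infty}(v)$ (after raising to the power $1/\delta$ this is exactly the statement $M:L^{\delta p}(u^?)\to L^{\delta q,\infty}(v^?)$ — one must be slightly careful with the normalization of $M_\delta$, but this is precisely the content of the definition of $A_{\delta p,\delta q}$), we get $\lambda^v_{M_\delta f}(y)\lesssim y^{-q}\Vert f\Vert_{L^p(u)}^{q}$.

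For the second term, I would apply Hölder's inequality in the form
$$
\int_{\mathbb R^n} f(x) M_{\mu}(v\chi_{\{g>y\}})(x)\,dx \le \Vert f\Vert_{L^p(u)}\,\bigg(\int_{\mathbb R^n} \big(M_\mu(v\chi_{\{g>y\}})\big)^{p'} u^{1-p'}\,dx\bigg)^{1/p'}.
$$
The key point is now to recognize that $\big(M_\mu(h)\big)^{p'} = \big(M(h^{1/\mu})\big)^{\mu p'}$ and that the hypothesis $(v^{1-q'},u^{1-p'})\in S_{\mu q',\mu p'}$ says precisely $M:L^{\mu q'}(v^{1-q'})\to L^{\mu p'}(u^{1-p'})$. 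Applying this with input function $(v\chi_{\{g>y\}})^{1/\mu}$ and checking that $\big((v\chi_{\{g>y\}})^{1/\mu}\big)^{\mu q'}\, v^{1-q'} = v^{q'}\chi_{\{g>y\}}\cdot v^{1-q'}=v\chi_{\{g>y\}}$ gives
$$
\bigg(\int_{\mathbb R^n} \big(M_\mu(v\chi_{\{g>y\}})\big)^{p'} u^{1-p'}\,dx\bigg)^{1/p'} = \bigg(\int_{\mathbb R^n} \big(M((v\chi_{\{g>y\}})^{1/\mu})\big)^{\mu p'} u^{1-p'}\,dx\bigg)^{1/p'} \lesssim \bigg(\int_{\{g>y\}} v\,dx\bigg)^{q'/(\mu p' \mu q')\cdot\mu} = \big(\lambda^v_g(y)\big)^{q'/p'}\cdot(\text{power bookkeeping}).
$$
Carrying the exponents through carefully, the second term contributes $y^{-1}\Vert f\Vert_{L^p(u)}\big(\lambda^v_g(y)\big)^{1/p'\cdot(q'/q')}$ type expression; matching this with the desired conclusion requires that the powers of $\lambda^v_g(y)$ work out so that one can absorb a fractional power of $\lambda^v_g(y)$ — exactly the self-improving trick already used in the proof of Theorem~\ref{rf1}.

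Collecting both estimates yields an inequality of the shape
$$
y^{q}\lambda^v_g(y) \lesssim \Vert f\Vert_{L^p(u)}^{q} + \Vert f\Vert_{L^p(u)}\,\big(y^{q}\lambda^v_g(y)\big)^{\theta}
$$
for some $\theta<1$, and since $g$ may be assumed bounded with compact support so that $\lambda^v_g(y)<\infty$ by (\ref{finito}), one absorbs the last term to conclude $\Vert g\Vert_{L^{q,\infty}(v)}\lesssim\Vert f\Vert_{L^p(u)}$. The main obstacle — and the step I would be most careful about — is the exponent bookkeeping in the second term: one has to verify that the power of $v$ produced when unwinding $M_\mu$ and applying the $S_{\mu q',\mu p'}$ bound really is $v\chi_{\{g>y\}}$ and not some other power, and that the resulting exponent $\theta$ on $y^q\lambda^v_g(y)$ is genuinely strictly less than $1$ (this is where the off-diagonal relation between $p,q$ and the choice of $\delta,\mu$ enters and where the condition $\mu q'>1$ implicit in Sawyer's characterization becomes relevant). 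Everything else is a routine combination of Hölder's inequality, the definitions of $A_{\delta p,\delta q}$ and $S_{\mu q',\mu p'}$, and the absorption argument already present in the paper.
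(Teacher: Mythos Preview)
Your proposal is correct and follows exactly the paper's approach: apply (\ref{dist2}) with the weight $v$, bound the first term via $(u,v)\in A_{\delta p,\delta q}$, and bound the second via H\"older plus the $S_{\mu q',\mu p'}$ hypothesis, then absorb. The exponent bookkeeping you flagged as the delicate point in fact comes out cleanly as $\big(\int M_\mu(v\chi_{\{g>y\}})^{p'}u^{1-p'}\big)^{1/p'}\lesssim \lambda^v_g(y)^{1/q'}$ (so $\theta=1/q'<1$, independent of $p$), exactly as in the paper.
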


\begin{proof}
Since $(u,v)\in A_{\delta p, \delta q}$, it holds that 
$$
\lambda^v_{M_{\delta}f}(y)\lesssim {\Vert f\Vert _{L^p(u)}^q \over y^q},
$$
and since $(v^{1-q'}, u^{1-p'})\in S_{\mu q', \mu p'}$
\begin{eqnarray*}
 \int_{\mathbb R^n} f(x) M_{\mu} (v \chi_{\{ g>y\}})(x) dx&\lesssim& \Vert f\Vert _{L^p(u)} \bigg(\int_{\mathbb R^n} M_{\mu} (v \chi_{\{ g>y\}})^{p'} u^{1-p'}(x) dx \bigg)^{1/p'}
 \\
 &\lesssim&  \Vert f\Vert _{L^p(u)} \lambda^v_g(y)^{1/q'}.
\end{eqnarray*}

Then  using (\ref{dist2}) we get the result. 
\end{proof}

\begin{remark} \label{duo}
In the case $p=q$ the previous result is trivial since, under the assumed hypothesis it can be proved that there exists 
$s>1$ such that $M:L^p(u^s)\rightarrow L^p(v^s)$ and hence (see \cite{n1:n1,duo:duo}) there exists $w\in A_p$ such that $v\lesssim w\lesssim u$. 

\end{remark}

\begin{theorem} \label{rfdp2} Let $f$ and $g$ be two positive functions such that (\ref{debil1})  holds for some  $1\le p_0<\infty$ and every $w\in A_{p_0}$, and let  $q< 1<p$ and $v\in L^1$. If 

\begin{enumerate}
\item[{(a)}] $(u,v)\in A_{\delta p, \delta q}$, for some $0<\delta<1$, and $M_\mu(v)\in L^{p'}(u^{1-p'})$, for some $0<\mu<1$, 
\end{enumerate}
or 

\begin{enumerate}
\item[{(b)}] $(u,v)\in A_{ p,  q}$ and there exists $1<s  <p$ such that  $M_\mu(v)\in L^{{p\over p-s}}(u^{1-{p\over p-s}})$, for some $0<\mu<1$, 
\end{enumerate}
then 
$$
\Vert g\Vert _{L^{q,\infty}(v)} \lesssim \Vert f\Vert _{L^p(u)}.
$$

\end{theorem}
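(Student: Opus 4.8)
The plan is to use the distribution-function estimate (\ref{dist2}) from Theorem~\ref{teo2}, exactly as in the proof of Theorem~\ref{rfdp}, but now adapting the two factors to the off-diagonal, sub-unit exponent $q<1$ setting. Taking $u=v$ in (\ref{dist2}), we get
$$
\lambda^v_g(y)\lesssim c_{p_0}\lambda^v_{M_\delta f}(y)+\frac1y\int_{\R^n} f(x)\,M_\mu\big(v\chi_{\{g>y\}}\big)(x)\,dx,
$$
and we must bound both terms by (a constant multiple of) $y^{-q}\Vert f\Vert_{L^p(u)}^q$, after which raising to the power $1/q$ and recalling (\ref{finito}) (so that a term $y^q\lambda^v_g(y)$ appearing on the right can be absorbed or handled as in the proof of Theorem~\ref{rf1}) will give the claim.

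For the first term, in case (a) the hypothesis $(u,v)\in A_{\delta p,\delta q}$ says precisely that $M_\delta:L^p(u)\to L^{q,\infty}(v)$, so $\lambda^v_{M_\delta f}(y)\lesssim y^{-q}\Vert f\Vert_{L^p(u)}^q$ immediately; in case (b), $(u,v)\in A_{p,q}$ gives the same with $\delta=1$, and one then uses that $M_\delta f\le M f$ for $\delta<1$ (or a routine comparison) so that the $M_\delta f$ appearing in (\ref{dist2}) is controlled — here I would pick $\delta$ close to $1$ and note $M_\delta f\lesssim Mf$ pointwise. For the integral term I would apply Hölder. In case (a), with exponents $p$ and $p'$:
$$
\frac1y\int_{\R^n} f\,M_\mu\big(v\chi_{\{g>y\}}\big)\,dx
\le\frac1y\,\Vert f\Vert_{L^p(u)}\bigg(\int_{\R^n} M_\mu\big(v\chi_{\{g>y\}}\big)^{p'}u^{1-p'}\,dx\bigg)^{1/p'}.
$$
Since $\chi_{\{g>y\}}\le 1$ and $v\in L^1$ with $M_\mu(v)\in L^{p'}(u^{1-p'})$ by hypothesis, the last integral is bounded by $\Vert M_\mu v\Vert_{L^{p'}(u^{1-p'})}^{p'}$, a finite constant; hence this term is $\lesssim y^{-1}\Vert f\Vert_{L^p(u)}$. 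Combining, $y^q\lambda^v_g(y)\lesssim\Vert f\Vert_{L^p(u)}^q+y^{q-1}\Vert f\Vert_{L^p(u)}$, and since $g$ is (without loss of generality) bounded with compact support one concludes $\Vert g\Vert_{L^{q,\infty}(v)}\lesssim\Vert f\Vert_{L^p(u)}$ after the usual absorption/rescaling; actually, because $q<1$ the exponent bookkeeping is slightly different from Theorem~\ref{rf1}, so I would instead split the region $\{g>y\}$ in the integral and feed back $\lambda^v_g$ with the correct power $1/q'$ (noting $q'<0$ requires care — more precisely one keeps $\chi_{\{g>y\}}$ and uses it only through $v\in L^1$, which is exactly why the hypothesis $v\in L^1$ is imposed).

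In case (b) the only change is the choice of Hölder exponents in the integral term: instead of pairing $f$ with $M_\mu(v\chi_{\{g>y\}})$ via $(p,p')$, I would interpolate using the extra room coming from $1<s<p$, writing $\frac1y\int f\,M_\mu(v\chi_{\{g>y\}})$ and applying Hölder with exponents chosen so that $f$ sits in an $L^s(u)$-type factor controlled by $\Vert f\Vert_{L^p(u)}$ together with $\lambda^v_{\{g>y\}}$-information, while $M_\mu(v)$ sits in the dual factor $L^{p/(p-s)}(u^{1-p/(p-s)})$, which is finite by hypothesis. The main obstacle, and the place to be careful, is the exponent arithmetic when $q<1$: here $q'<0$, so one cannot naively run the absorption argument of Theorem~\ref{rf1}; the role of $v\in L^1$ and $\chi_{\{g>y\}}\le1$ is precisely to replace the factor $\lambda^v_g(y)^{1/q'}$ that would appear in the diagonal argument by a genuine constant, decoupling the two terms. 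Once that is recognized, everything else is Hölder plus the definitions of $A_{p,q}$, $A_{\delta p,\delta q}$, and the integrability hypotheses on $M_\mu v$.
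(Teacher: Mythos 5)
Your written-out estimate for the integral term in case (a) has a real gap. After Hölder, you simply drop $\chi_{\{g>y\}}$ and bound
$\bigl(\int M_\mu(v\chi_{\{g>y\}})^{p'}u^{1-p'}\bigr)^{1/p'}\le\Vert M_\mu v\Vert_{L^{p'}(u^{1-p'})}$,
a constant, arriving at $y^q\lambda^v_g(y)\lesssim\Vert f\Vert_{L^p(u)}^q+y^{q-1}\Vert f\Vert_{L^p(u)}$. Since $q-1<0$, for $y<\Vert f\Vert_{L^p(u)}$ the second term is larger than the first, so this does \emph{not} give $\sup_y y^q\lambda^v_g(y)\lesssim\Vert f\Vert_{L^p(u)}^q$; the bound degenerates at small $y$. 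You recognize there is an issue but the fix you sketch goes in the wrong direction: you say the role of $v\in L^1$ is to ``replace the factor $\lambda^v_g(y)^{1/q'}$ by a genuine constant,'' whereas the paper's argument does precisely the opposite — it \emph{reinstates} the factor $\lambda^v_g(y)^{1/q'}$ in order to run the same absorption as in Theorem~\ref{rfdp}. Concretely, since $(1-q)/q>0$, $v(E)\le\Vert v\Vert_1$ and $M_\mu(v\chi_E)\le M_\mu(v)$ give
$$
\sup_{E}\Bigl(\int M_\mu(v\chi_E)^{p'}u^{1-p'}\Bigr)^{1/p'}v(E)^{(1-q)/q}\le\Vert M_\mu v\Vert_{L^{p'}(u^{1-p'})}\Vert v\Vert_1^{(1-q)/q}<\infty,
$$
so $\bigl(\int M_\mu(v\chi_{\{g>y\}})^{p'}u^{1-p'}\bigr)^{1/p'}\lesssim\lambda^v_g(y)^{(q-1)/q}=\lambda^v_g(y)^{1/q'}$. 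Feeding this back gives $\lambda^v_g(y)\lesssim y^{-q}\Vert f\Vert^q+y^{-1}\Vert f\Vert\,\lambda^v_g(y)^{1/q'}$, and the usual dichotomy (either the first term dominates, or $\lambda^v_g(y)^{1-1/q'}=\lambda^v_g(y)^{1/q}\lesssim\Vert f\Vert/y$) closes the argument even though $q'<0$. That step — keeping $\lambda^v_g(y)^{1/q'}$ and showing the ratio is uniformly bounded — is the crux, and it is missing from your proposal.

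For part (b) your sketch also misses the point: you keep the estimate (\ref{dist2}) with $f$ to the first power and try to compensate with ``interpolated'' Hölder exponents, but the correct move (and what the paper does) is to invoke Corollary~\ref{conclu}, which puts $f^s$ into the integrand with $s>1$; then Hölder with exponents $p/s$ and $(p/s)'=p/(p-s)$ matches exactly the integrability hypothesis $M_\mu(v)\in L^{p/(p-s)}(u^{1-p/(p-s)})$, and the rest proceeds as in (a). Without changing the power on $f$, the exponents do not line up with the assumed hypothesis. Finally, a small point: the hypothesis $(u,v)\in A_{p,q}$ alone does not control $\lambda^v_{M_\delta f}(y)$ for $\delta<1$ via a pointwise comparison $M_\delta f\lesssim Mf$ (that comparison goes the other way); in (b) one simply takes $\delta=1$ in Corollary~\ref{conclu}, using $(u,v)\in A_{p,q}$ directly.
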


\begin{proof} We only prove (a), since (b) follows similarly using Corollary \ref{conclu}. As before, we have that 
\begin{eqnarray*}
 \int_{\mathbb R^n} f(x) M_{\mu} (v \chi_{\{ g>y\}})(x) dx&\lesssim& \Vert f\Vert _{L^p(u)} \bigg(\int_{\mathbb R^n} M_{\mu} (v \chi_{\{ g>y\}})^{p'} u^{1-p'}(x) dx \bigg)^{1/p'}.
 \end{eqnarray*}
 Now since $q< 1$, we have by hypothesis that
 \begin{align*}
 \sup_{E}&\bigg(\int_{\mathbb R^n} M_{\mu} (v \chi_{E})^{p'} u^{1-p'}(x) dx \bigg)^{1/p'}v(E)^{{1-q\over q}}
\\
\le&\bigg(\int_{\mathbb R^n} M_{\mu} (v )^{p'} u^{1-p'}(x) dx \bigg)^{1/p'}||v||_1^{{1-q\over q}}<\infty
\end{align*}
 and hence
 $$
 \bigg(\int_{\mathbb R^n} M_{\mu} (v \chi_{\{ g>y\}})^{p'} u^{1-p'}(x) dx \bigg)^{1/p'}\lesssim v(\{ g>y\})^{{q-1\over q}}=\lambda^v_g(y)^{{q-1\over q}},
 $$
and the result follows as in Theorem~\ref{rfdp}. 
\end{proof}

\bigbreak

\begin{remarks}
\
\begin{enumerate}
\item[{(a)}] One can immediately see from the proof that the previous theorem holds for $q=1$ without the hypothesis  $v\in L^1$. 

\item[{(b)}] Observe that Theorem \ref{rf1}  is a particular case of Theorem \ref{rfdp}, since if $u=v\in A_p$, the hypotheses of that theorem hold. 

\end{enumerate}
\end{remarks}

If we consider the case when $f=\chi_E$, then something more can be said since it is known (see \cite{CRS}) that 
$$
\Vert M \chi_E \Vert _{L^{q, \infty}(v)}\le \Vert \chi_E\Vert _{L^{p}(u)},
$$
for every measurable set $E$ if and only if 
\begin{equation}\label{JA}
{v^{1/q}(\cup_j Q_j)\over u^{1/p}(\cup_j E_j) }\lesssim \max {|Q_j|\over |E_j|},
\end{equation}
for every finite collection of pairwise disjoint cubes $Q_j$ and measurable sets $E_j\subset Q_j$. We shall denote condition (\ref{JA}) by $(u,v)\in R_{p,q}$.

\begin{theorem} \label {restricted} Let $T$ be an operator such that
$$
T:L^{p_0}(w) \longrightarrow L^{p_0, \infty}(w),
$$
for every $w\in A_{p_0}$. Then, for every $p> 1$,  
$$
\Vert T \chi_F \Vert _{L^{q, \infty}(v)}\le \Vert \chi_F\Vert _{L^{p}(u)},
$$
for every measurable set $F$,  if the following conditions hold: 

\begin{enumerate}
\item[{(a)}] Case $1\le  q<\infty$:  $(u,v)$ satisfies (\ref{JA}) and for some $0<\mu<1$,  some $s>1$ and every measurable sets $E$ and $F$, 
$$
\int_F M_\mu(v\chi_E)(x)dx \lesssim v(E)^{1-{s\over q}}u(F)^{{s\over p}},
$$
or  $(u,v)\in R_{\delta p, \delta q}$ for some $0<\delta<1$  and for some $0<\mu<1$ and every measurable sets $E$ and $F$, 
$$
\int_F M_\mu(v\chi_E)(x)dx \lesssim v(E)^{{1\over q'}}u(F)^{{1\over p}}.
$$

\item[{(b)}] Case $q\le 1$: $v\in L^1$   and either $(u,v)\in R_{ p,  q}$ and  for some $s>1$
$$
\int_F M_\mu(v)(x)dx \lesssim  u(F)^{{s\over p}},
$$
or $(u,v)\in R_{\delta p, \delta q}$ for some $0<\delta<1$ and  
$$
\int_F M_\mu(v)(x)dx \lesssim  u(F)^{{1\over p}}. 
$$
\end{enumerate}
\end{theorem}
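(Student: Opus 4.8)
The plan is to apply the distribution-function estimate for $g = T\chi_F$ with the weight $u$ replaced by $v$, using the hypothesis that $T : L^{p_0}(w) \to L^{p_0,\infty}(w)$ for $w \in A_{p_0}$. Concretely, I would invoke Theorem~\ref{teo2} (or Corollary~\ref{conclu} when a power $s > 1$ is needed) with $u := v$ and $f := \chi_F$, yielding
$$
\lambda^v_{T\chi_F}(y) \lesssim c_{p_0}\lambda^v_{M_\delta \chi_F}(y) + \frac{1}{y}\int_{\mathbb R^n} \chi_F(x)\, M_\mu(v\chi_{\{|T\chi_F|>y\}})(x)\, dx,
$$
so the proof splits into controlling the maximal term $\lambda^v_{M_\delta \chi_F}(y)$ and the integral term $\frac{1}{y}\int_F M_\mu(v\chi_{\{g>y\}})$.

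For the maximal term: when $(u,v) \in R_{p,q}$ (or $R_{\delta p,\delta q}$) the characterization of $\|M\chi_E\|_{L^{q,\infty}(v)} \le \|\chi_E\|_{L^p(u)}$ quoted just before the theorem gives exactly $\lambda^v_{M_\delta\chi_F}(y) \lesssim y^{-q} u(F)^{q/p}$ (after absorbing the power $\delta$ into the $R$-condition, which is why one uses $R_{\delta p,\delta q}$ in the second alternative of each case); in the first alternative of case (a), $s > 1$ plays the role that $\delta$ plays elsewhere and Corollary~\ref{conclu} is used instead of Theorem~\ref{teo2}. For the integral term, set $E := \{|T\chi_F| > y\} = \{g > y\}$ and use the hypothesis of the form $\int_F M_\mu(v\chi_E)\,dx \lesssim v(E)^{1-s/q}u(F)^{s/p}$ (or the $1/q'$ version, i.e.\ $s=1$), which rewrites the second term as $\frac{1}{y}\lambda^v_g(y)^{1-s/q} u(F)^{s/p}$. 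In case (b), where $q \le 1$ and $v \in L^1$, one first dominates $M_\mu(v\chi_E) \le M_\mu(v)$ and uses the simpler hypotheses $\int_F M_\mu(v)\,dx \lesssim u(F)^{s/p}$, together with the crude bound $v(E)^{1-s/q} \ge \|v\|_1^{1-s/q}$ or the corresponding bootstrap, exactly as in the proof of Theorem~\ref{rfdp2}(a).

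Putting the two estimates together yields an inequality of the shape
$$
y^q \lambda^v_g(y) \lesssim u(F)^{q/p} + u(F)^{sq/p}\bigl(y^q\lambda^v_g(y)\bigr)^{1-s/q},
$$
and since $1 - s/q < 1$ (because $s > 1$; or $= 1/q'$ when $s = 1$, still $< 1$ as $q > 1$ in case (a)), and $y^q\lambda^v_g(y) < \infty$ by the reduction to $g$ bounded with compact support and \eqref{finito}, one can absorb the last factor and conclude $y^q\lambda^v_g(y) \lesssim u(F)^{q/p}$, i.e.\ $\|T\chi_F\|_{L^{q,\infty}(v)} \lesssim \|\chi_F\|_{L^p(u)}$. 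This absorption step is identical in spirit to the end of the proofs of Theorems~\ref{rf1} and~\ref{rfdp}.

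The main obstacle is bookkeeping rather than conceptual: one must verify that in each of the four sub-cases the two hypothesized testing inequalities are precisely what is needed to match the two terms produced by the distribution formula, and that the correct version of the formula (Theorem~\ref{teo2} versus Corollary~\ref{conclu}, i.e.\ $M_\delta$ versus an $f^s$ with $s>1$) and the correct $R$-condition ($R_{p,q}$ with an auxiliary $s$, versus $R_{\delta p,\delta q}$) are paired up consistently; the case $q \le 1$ additionally requires the $L^1$ hypothesis on $v$ to make the quantity $v(\{g>y\})^{1-s/q}$ (with negative exponent) harmless, exactly as in Theorem~\ref{rfdp2}. Once the pairing is set up, each sub-case is a one-line substitution followed by the standard absorption argument.
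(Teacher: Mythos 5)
Your proposal is correct and follows essentially the same route as the paper: invoke formula (\ref{distF}) (or (\ref{dist2}) in the second alternative of each case) with $u:=v$, $f:=\chi_F$, bound the maximal term via the $R_{p,q}$-type hypothesis and the integral term via the testing hypothesis, then absorb, noting that $\lambda^v_g(y)<\infty$ by the reduction to bounded compactly supported $g$. The only divergence is cosmetic: for case (b) the paper uses the one-line observation that $v\in L^1$ and $q<s$ give $\lambda_{T\chi_F}^v(y)^{1/q}\lesssim\lambda_{T\chi_F}^v(y)^{1/s}$ and then concludes directly, whereas you insert the harmless factor $v(E)^{1-s/q}\gtrsim\|v\|_1^{1-s/q}$ to reduce to the absorption scheme of case (a); both are valid and of the same flavor. (One typo: in your displayed absorption inequality the coefficient of the second term should be $u(F)^{s/p}$, not $u(F)^{sq/p}$, though the conclusion you draw from it is the right one.)
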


\begin{proof} Using formula (\ref{distF}) with $f=\chi_F$, we get
$$
\lambda_{T\chi_F}^v(y)\lesssim \lambda_{M\chi_F}^v(y)+{1\over y^s}\int_F M_\mu(v\chi_{\{|T\chi_F|>y\}})(x) dx.
$$

Now to prove the first part of (a) we see that by (\ref{JA}), 
$$
\lambda_{M\chi_F}^v(y)\lesssim {1\over y^q} u(F)^{q/p}
$$
and by hypothesis
$$
\int_F M_\mu(v\chi_{\{|T\chi_F|>y\}})(x) dx\lesssim \lambda_{T\chi_F}^v(y)^{1-s/q} u(F)^{s/p}.
$$
Therefore, 
$$
\lambda_{T\chi_F}^v(y)\lesssim {1\over y^q} u(F)^{q/p}
+ {1\over y^s}\lambda_{T\chi_F}^v(y)^{1-s/q} u(F)^{s/p},
$$
from which the result follows. 

For the second part of (a) we proceed as before but we now use formula (\ref{dist2}). 

To prove (b) we proceed as in a) and we use that since $v\in L^1$ and $q<s$, 
$$
\lambda_{T\chi_F}^v(y)^{1/q}\lesssim \lambda_{T\chi_F}^v (y)^{1/s}
$$
for every $s\ge 1$. 

\end{proof}

\centerline{\bf Extrapolation for $A_\infty$ weights}

\ 

Our next theorem is the weak type extrapolation result for weights in $A_\infty$, which can be found in \cite{cmp:cmp}. 

\begin{theorem}\label{nola} Let $f$ and $g$ be two positive functions such that (\ref{debil1})  holds for every $w\in A_{\infty}$. Then, for every $0<p<\infty$ and every $w\in A_{\infty}$ we have that 
$$
\Vert g\Vert _{L^{p, \infty}(w)}\le C_w \Vert f\Vert _{L^{p}(w)}.
$$

\end{theorem}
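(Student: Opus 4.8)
The plan is to derive Theorem~\ref{nola} from the $A_\infty$ distribution estimate (\ref{dist3}) of Theorem~\ref{teoo3} by essentially the same argument used to prove Theorem~\ref{rf1}, but now exploiting the extra flexibility that in (\ref{dist3}) the parameter $\alpha$ may be taken all the way up to $p_0$ and $r>0$ is arbitrary. Fix $0<p<\infty$ and $w\in A_\infty$. Applying (\ref{dist3}) with $u=w$, with a choice of exponent $s:=p_0-\alpha$ that we are free to place anywhere in $(0,p_0]$, and with $r>0$ to be chosen, we get
\begin{equation*}
\lambda^w_g(y)\lesssim \lambda^w_{M_r f}(y)+\frac{1}{y^{s}}\int_{\mathbb R^n} f^{s}(x)\,M_\mu(w\chi_{\{g>y\}})(x)\,dx.
\end{equation*}
The idea is to pick $s$ strictly smaller than $p$ (possible since $p>0$ and $s$ can be as small as we like), so that the integral on the right can be split by Hölder with exponents $p/s$ and $(p/s)'$, producing a factor $\|f\|_{L^p(w)}^s$ times an integral of $M_\mu(w\chi_{\{g>y\}})^{(p/s)'}$ against $w^{1-(p/s)'}$.

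The next step is to absorb that last integral. Since $w\in A_\infty$, we have $w\in A_t$ for some $t>1$, hence $w^{1-t'}\in A_{t'}$; choosing $s<p$ close enough to $p$ so that $(p/s)'\ge t'$ and then choosing $\mu$ close enough to $1$ so that $w^{1-(p/s)'}\in A_{\mu (p/s)'}$, the vector-valued / boundedness properties of $M_\mu$ on the dual weighted space give
\begin{equation*}
\Big(\int_{\mathbb R^n} M_\mu(w\chi_{\{g>y\}})^{(p/s)'} w^{1-(p/s)'}\,dx\Big)^{1/(p/s)'}\lesssim \Big(\int_{\{g>y\}} w\,dx\Big)^{1/(p/s)'}=\lambda^w_g(y)^{s/p}.
\end{equation*}
For the first term, since $w\in A_\infty\subset A_\tau$ for some $\tau$, one can pick $r>0$ small enough that $w\in A_{p/(r\cdot\text{something})}$—more precisely, choose $r$ so small that $M_r$ satisfies $\lambda^w_{M_r f}(y)\lesssim y^{-p}\|f\|_{L^p(w)}^p$; this is exactly the statement that $M_r\colon L^p(w)\to L^{p,\infty}(w)$, which holds once $w\in A_{p/r}$ (equivalently $w^{?}\in A_{p}$ after the usual change of variables), and such an $r$ exists because $w\in A_\infty$.

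Putting these together yields
\begin{equation*}
y^{p}\lambda^w_g(y)\lesssim \|f\|_{L^p(w)}^p+\|f\|_{L^p(w)}^{s}\big(y^{p}\lambda^w_g(y)\big)^{1-s/p},
\end{equation*}
and since we have reduced at the outset to $g$ bounded with compact support, (\ref{finito}) guarantees that the quantity $y^p\lambda^w_g(y)$ is finite, so it can be absorbed into the left-hand side (using Young's inequality in the form $ab^{1-s/p}\le \tfrac12 b+C a^{p/s}$), giving $y^p\lambda^w_g(y)\lesssim \|f\|_{L^p(w)}^p$, i.e. the desired weak-type bound. The main obstacle I expect is the bookkeeping of the parameter choices: one must simultaneously arrange $s<p$ (so Hölder works and the absorption exponent $1-s/p$ is positive), $(p/s)'$ large enough and $\mu$ close enough to $1$ that $w^{1-(p/s)'}\in A_{\mu(p/s)'}$ so that $M_\mu$ is bounded on the dual space, and $r$ small enough that $M_r$ is weak-type $(p,p)$ with respect to $w$—all of which is possible precisely because $w\in A_\infty$ gives room on every side, but it requires care to verify the constants depend only on $\|w\|_{A_t}$ for the relevant $t$. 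A secondary point is that when $p\le 1$ the Hölder step with exponent $p/s$ still makes sense only because $s<p\le 1$ forces $s<1$; this is fine, but worth checking that no step implicitly assumed $p>1$.
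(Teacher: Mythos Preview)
Your approach is exactly the one the paper intends (its proof is the single line ``use (\ref{dist3}) with $r$ such that $w\in A_{rp}$''), and the absorption argument you outline is the correct way to finish. However, your parameter bookkeeping is backwards in the two places you yourself flagged as delicate. First, with the paper's convention $M_r f=(M(f^{1/r}))^r$, the bound $M_r:L^p(w)\to L^{p,\infty}(w)$ is equivalent to $M:L^{rp}(w)\to L^{rp,\infty}(w)$, i.e.\ to $w\in A_{rp}$ (not $A_{p/r}$); since $w\in A_t$ for some fixed $t$, you must take $r$ \emph{large} (namely $rp\ge t$), not small. Second, the H\"older/dual step needs $w^{1-(p/s)'}\in A_{(p/s)'}$, which is equivalent to $w\in A_{p/s}$; this follows from $w\in A_t$ only when $p/s\ge t$, i.e.\ when $s\le p/t$ is \emph{small}, not when $s$ is close to $p$. (Your stated condition $(p/s)'\ge t'$ is the reverse inequality $p/s\le t$ and does not yield $w\in A_{p/s}$.) With these corrections---choose $0<s\le\min(p_0,p/t)$ and $r\ge t/p$, then $\mu<1$ by the openness of $A_{(p/s)'}$---the argument goes through exactly as you describe.
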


\begin{proof} The proof follows using (\ref{dist3}) with $r$ such that $w\in A_{rp}$.

\end{proof}

From this result we can conclude that under the hypothesis of Theorem~\ref{nola} we have that, for every $s, r>0$, 
\begin{equation}\label{*}
\lambda^v_g(y)\lesssim  \lambda^v_{M_rf}(y) + {1\over y^{s}} \int_{\mathbb R^n} f^{s}(x) M_\mu(v \chi_{\{ g>y\}})(x) dx,
\end{equation}\
 and hence:

\begin{theorem}  Let $f$ and $g$ be two positive functions such that (\ref{debil1})  holds for every $w\in A_{\infty}$, for some  $1\le p_0<\infty$. Then, for every $1\le p<\infty$ we have that 
$$
\Vert g\Vert _{L^{q,\infty}(v)} \lesssim \Vert f\Vert _{L^p(u)},
$$
if $(u,v)\in A_{p, q}$  and one the following condition  holds: 

\begin{enumerate}
\item[{(a)}] there exists $0<s<\min ({p,q})$ such that $(v^{1-{q\over q-s}}, u^{1-{p\over p-s}})\in S_{\mu {q\over q-s}, \mu {p\over p-s}}$, for some  $0<\mu<1$,  

\item[{(b)}]$q\le 1$  and  $M_\mu(v)\in L^{{p\over p-q}}(u^{1-{p\over p-q}})$, for some $0<\mu<1$. 
\end{enumerate}
\end{theorem}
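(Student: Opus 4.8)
The plan is to start from the distributional inequality (\ref{*}), which under the present hypotheses is available for the weight $v$ appearing in the statement, for every $s>0$ and $r>0$, and every $0<\mu<1$. I would use it with $r=1$ (so that $M_rf=Mf$) and with $\mu$ the exponent furnished by hypothesis (a) or (b):
$$
\lambda^v_g(y)\lesssim \lambda^v_{Mf}(y)+\frac{1}{y^{s}}\int_{\mathbb R^n}f^{s}(x)\,M_\mu\big(v\chi_{\{g>y\}}\big)(x)\,dx .
$$
Since $(u,v)\in A_{p,q}$ means precisely that $M:L^p(u)\to L^{q,\infty}(v)$, the first term is controlled by $\lambda^v_{Mf}(y)\lesssim y^{-q}\|f\|_{L^p(u)}^{q}$. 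For the second term, Hölder's inequality with exponents $p/s$ and $p/(p-s)$ gives
$$
\frac{1}{y^{s}}\int_{\mathbb R^n}f^{s}\,M_\mu\big(v\chi_{\{g>y\}}\big)\,dx\le \frac{\|f\|_{L^p(u)}^{s}}{y^{s}}\,\Big(\int_{\mathbb R^n}M_\mu\big(v\chi_{\{g>y\}}\big)^{p/(p-s)}u^{1-p/(p-s)}\,dx\Big)^{(p-s)/p},
$$
so everything reduces to estimating that last factor.

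In case (a) I would take $s$ to be the exponent supplied by the hypothesis, so $0<s<\min(p,q)$, and abbreviate $Q_0=q/(q-s)$ and $P_0=p/(p-s)$. Writing $M_\mu h=\big(M(h^{1/\mu})\big)^{\mu}$ and invoking the boundedness encoded in $(v^{1-Q_0},u^{1-P_0})\in S_{\mu Q_0,\,\mu P_0}$, namely $M:L^{\mu Q_0}(v^{1-Q_0})\to L^{\mu P_0}(u^{1-P_0})$, applied to $h=(v\chi_{\{g>y\}})^{1/\mu}$, the powers of $\mu$ cancel and — using $h^{Q_0}v^{1-Q_0}=v\chi_{\{g>y\}}$ — one gets
$$
\Big(\int_{\mathbb R^n}M_\mu\big(v\chi_{\{g>y\}}\big)^{P_0}u^{1-P_0}\,dx\Big)^{1/P_0}\lesssim \Big(\int_{\{g>y\}}v\,dx\Big)^{1/Q_0}=\lambda^v_g(y)^{(q-s)/q}.
$$
Feeding this back into (\ref{*}) and multiplying by $y^{q}$, with $Z=y^{q}\lambda^v_g(y)$ and $A=\|f\|_{L^p(u)}$, we arrive at $Z\lesssim A^{q}+A^{s}Z^{(q-s)/q}$. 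Because we may assume $g$ bounded with compact support, (\ref{finito}) guarantees $Z<\infty$, and since $0<(q-s)/q<1$ a standard absorption argument yields $Z\lesssim A^{q}$; that is, $y\,\lambda^v_g(y)^{1/q}\lesssim\|f\|_{L^p(u)}$ for all $y>0$, which is the assertion.

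In case (b) I would instead take $s=q$ (admissible since $q\le1<p$, so $p/q>1$). Then, by monotonicity of $M_\mu$, the last factor in the Hölder estimate above is dominated by the finite constant $\big(\int_{\mathbb R^n}M_\mu(v)^{p/(p-q)}u^{1-p/(p-q)}\,dx\big)^{(p-q)/p}$ guaranteed by the hypothesis. Since now $s=q$, the two terms in (\ref{*}) carry the same power $y^{-q}$, and one concludes directly that $\lambda^v_g(y)\lesssim y^{-q}\|f\|_{L^p(u)}^{q}$, with no bootstrap required.

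The step I expect to be the crux is the exponent bookkeeping in case (a): one must check that the exponents in the Sawyer-type condition $S_{\mu q/(q-s),\,\mu p/(p-s)}$ are exactly those that, after stripping off $M_\mu$ and absorbing the factors $\chi_{\{g>y\}}^{1/\mu}$ and $v$, recombine into a single power of $\lambda^v_g(y)$ that is strictly less than $1$ — this is what makes the final absorption argument close, and it explains why the hypothesis is phrased in that precise form. The remaining estimates (the $A_{p,q}$ bound on $\lambda^v_{Mf}$, Hölder, and the absorption lemma) are routine.
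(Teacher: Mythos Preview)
Your proposal is correct and follows exactly the route indicated in the paper: the paper's entire proof reads ``(a) follows as usual using (\ref{*}) with $r=1$ and (b) with $r=1$ and $s=q$,'' and what you have written is precisely the unpacking of that sentence --- the H\"older step with exponents $p/s$ and $p/(p-s)$, the use of the $S_{\mu q/(q-s),\,\mu p/(p-s)}$ condition to recover $\lambda^v_g(y)^{(q-s)/q}$, and the absorption argument, are the ``usual'' steps already carried out in Theorems~\ref{rfdp} and~\ref{rfdp2}. One minor remark: you write ``admissible since $q\le 1<p$'', but the statement only assumes $p\ge 1$; the argument for (b) still goes through provided $q<p$, which is forced by the requirement that $p/(p-q)$ make sense.
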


\begin{proof}  (a) follows as usual using (\ref{*}) with $r=1$ and (b)  with $r=1$ and $s=q$.
\end{proof}
 
\medskip

\centerline{\bf Extrapolation for multilinear operators}

\ 

We give as an application the weak extrapolation result proved in \cite{gm:gm}. In Section \ref{strong} we shall give the strong-type version.

\begin{theorem}\label {rfml} Let  $T$ be an operator such that, for some  $p_1, p_2\ge 1$ and 
$$
{1\over p}={1\over p_1}+ {1\over p_2}
$$
we have that 
$$
T:L^{p_1}(w_1)\times L^{p_2}(w_2)\longrightarrow L^{p, \infty}(w)
$$
is bounded for every $w_1\in A_{p_1}$ and  every $w_2\in A_{p_2}$, where $w=w_1^{p/p_1} w_2^{p/p_2}$. Then, for every $q_1, q_2> 1$, 
$$
{1\over q}={1\over q_1}+ {1\over q_2}, \qquad w=w_1^{q/q_1} w_2^{q/q_2}
$$
with $w_1\in A_{q_1}$ and every $w_2\in A_{q_2}$ we have that 
$$
T:L^{q_1}(w_1)\times L^{q_2}(w_2)\longrightarrow L^{q,\infty}(w)
$$
is bounded.
\end{theorem}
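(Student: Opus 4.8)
The plan is to deduce Theorem~\ref{rfml} from the multilinear distribution estimate of Theorem~\ref{multi}, in exactly the same spirit as Theorem~\ref{rf1} was deduced from Theorem~\ref{teo2}. First I would fix $q_1,q_2>1$ with $1/q=1/q_1+1/q_2$, fix $w_1\in A_{q_1}$, $w_2\in A_{q_2}$, set $w=w_1^{q/q_1}w_2^{q/q_2}$, and apply Theorem~\ref{multi} with $u=w$, $u_1=w_1^{q/q_1}$, $u_2=w_2^{q/q_2}$ (so $\nu_1,\nu_2$ are chosen accordingly), and with the exponents $p_1\le p_2$ for which the weak hypothesis holds; I should treat the case $1\le q_1\le q_2$, the other being symmetric. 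This gives
\begin{eqnarray*}
\lambda^{w}_{T(f_1,f_2)}(y)&\lesssim& \lambda^{w}_{M_\rho f_1\,M_\rho f_2}(y)\\
&&+\,\frac{1}{y^{p/p_1}}\Big(\int_{\mathbb R^n} f_1\, M_\mu(u_1^{\beta_1}u_2^{\beta_2}\chi_{\{|T(f_1,f_2)|>y\}})\Big)^{p/p_1}\\
&&\qquad\times\Big(\int_{\mathbb R^n} f_2^{p_2/p_1}\, v^{s}M_\mu(v^{-s}u_1^{\gamma_1}u_2^{\gamma_2}\chi_{\{|T(f_1,f_2)|>y\}})\Big)^{p/p_2},
\end{eqnarray*}
where $v\in A_1$ and $\rho,\mu\in(0,1)$ are still at our disposal, and I would reserve the freedom in $v$ to absorb a maximal function later.

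The second step is to estimate the first term. Since $w=w_1^{q/q_1}w_2^{q/q_2}$ with $w_i\in A_{q_i}$, the product $M_\rho f_1\cdot M_\rho f_2$ should be controlled in $L^{q,\infty}(w)$ by $\|f_1\|_{L^{q_1}(w_1)}\|f_2\|_{L^{q_2}(w_2)}$: choosing $\rho$ close enough to $1$ that $w_i\in A_{\rho q_i}$, the operator $M_\rho$ is bounded on $L^{q_i}(w_i)$, and then a Hölder argument in the weak-type quasinorm (using $1/q=1/q_1+1/q_2$) gives $\lambda^{w}_{M_\rho f_1 M_\rho f_2}(y)\lesssim y^{-q}\|f_1\|_{L^{q_1}(w_1)}^{q}\|f_2\|_{L^{q_2}(w_2)}^{q}$. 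For the second term I would apply Hölder to each integral: in the first, split $f_1=f_1 w_1^{1/q_1}\cdot w_1^{-1/q_1}$ with exponents $q_1,q_1'$ to get $\|f_1\|_{L^{q_1}(w_1)}$ times $\big(\int M_\mu(\cdots)^{q_1'}w_1^{1-q_1'}\big)^{1/q_1'}$; since $w_1\in A_{q_1}$ forces $w_1^{1-q_1'}\in A_{q_1'}$, choosing $\mu$ so that $w_1^{1-q_1'}\in A_{\mu q_1'}$ makes $M_\mu$ bounded there, leaving (after unravelling that $u_1^{\beta_1}u_2^{\beta_2}$ with the constraint $\beta_1 p/p_1+\gamma_1 p/p_2=\nu_1$ matches $w$ on the relevant support) a factor of the form $w(\{|T(f_1,f_2)|>y\})^{1/q_1'}$. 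The analogous manipulation on the $f_2$-integral, using Hölder with exponents $q_1/p_2$-type weights and the $A_1$ weight $v$ together with $v^{s}, v^{-s}$ to neutralise the $v$-powers, should produce $\|f_2\|_{L^{q_2}(w_2)}$ times $w(\{|T(f_1,f_2)|>y\})^{\text{(something)}/q_2'}$.

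Assembling the pieces and tracking exponents, the upshot should be an inequality of the schematic form
$$
y^{q}\,\lambda^{w}_{g}(y)\lesssim \|f_1\|_{L^{q_1}(w_1)}^{q}\|f_2\|_{L^{q_2}(w_2)}^{q}
+\|f_1\|_{L^{q_1}(w_1)}^{q}\|f_2\|_{L^{q_2}(w_2)}^{q}\,\big(y^{q}\lambda^{w}_{g}(y)\big)^{\theta},
$$
with $g=T(f_1,f_2)$ and some $0<\theta<1$; since $\lambda^w_g(y)<\infty$ (reducing first to $g$ bounded with compact support, as in Section~\ref{S:main}, and using (\ref{finito})), the finite quantity $X=y^q\lambda^w_g(y)$ satisfies $X\lesssim A+A X^{\theta}$, hence $X\lesssim A$, i.e.\ $\|T(f_1,f_2)\|_{L^{q,\infty}(w)}\lesssim \|f_1\|_{L^{q_1}(w_1)}\|f_2\|_{L^{q_2}(w_2)}$. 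The main obstacle I anticipate is purely bookkeeping: verifying that the parameters $\rho,\mu,v,s$ and the splitting exponents $\beta_i,\gamma_i$ can be chosen simultaneously so that (i) every maximal operator $M_\mu$, $M_\rho$ that appears lands on a weight in the right Muckenhoupt class, (ii) the $v$-powers cancel exactly (this is what the definition of $s$ in Theorem~\ref{multi} is designed for), and (iii) the leftover powers of $w$ on $\{g>y\}$ combine — via $1/q_1'+1/q_2'=1+1/q'-1/q = \ldots$ — to an exponent $\theta<1$ so that the self-improving absorption closes. Checking that last arithmetic identity on the exponents, and confirming $\theta<1$, is the one place where a sign or a conjugate-exponent slip would break the argument, so I would do it carefully; everything else is a routine Hölder-plus-weighted-maximal-bound computation parallel to the proof of Theorem~\ref{rf1}.
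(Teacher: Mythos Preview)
Your overall strategy---apply Theorem~\ref{multi}, bound $\lambda^w_{M_\rho f_1\,M_\rho f_2}(y)$ via the weighted boundedness of $M_\rho$, estimate each integral by H\"older plus a weighted $M_\mu$-bound, and close via $X\lesssim A+AX^\theta$---matches the paper's. The treatment of the first term and of the $f_1$-integral would indeed go through as you describe. The gap is in the $f_2$-integral, and it is not ``purely bookkeeping''.

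That integral carries $f_2^{p_2/p_1}$, not $f_2$. To land on $\|f_2\|_{L^{q_2}(w_2)}$ by H\"older you would need the exponent $r=q_2p_1/p_2$, which need not exceed $1$ (take $p_1<p_2$ and $q_2$ close to $1$), so H\"older is unavailable in general. Even when $r>1$, the $v^s$ cancellation you invoke is tuned to one specific target: with $w_2\in A_{p_2}$ one factors $w_2=w_{2,1}^{1-p_2}w_{2,2}$ with $w_{2,i}\in A_1$, sets $v=w_{2,1}$, and then $w_2^{1-p_1'}v^{sp_1'}=w_{2,2}^{1-p_1'}w_{2,1}\in A_{p_1'}$ precisely because the exponent of $w_{2,1}$ collapses via $(1-p_2)(1-p_1')+sp_1'=(1-p_2)(1-p_2')=1$. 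If instead $w_2\in A_{q_2}$ with $q_2\neq p_2$, the factorization exponent is $1-q_2$ and this identity fails, so no choice of $v\in A_1$ forces the dual weight into the required Muckenhoupt class.

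The paper's fix is a two-step extrapolation. First go from $(p_1,p_2)$ to the \emph{diagonal} $(p_2,p_2)$: there the target weight $w_2$ lies in $A_{p_2}$, so the $v$-trick above works, and H\"older with exponent $p_1$ on $f_2^{p_2/p_1}$ gives exactly $\|f_2\|_{L^{p_2}(w_2)}$. Second, extrapolate from the diagonal---where now $p_2/p_1=1$ and $s=0$, so the second integral contains just $f_2$ and no $v$---to an arbitrary $(q_1,q_2)$; at that stage your one-step scheme runs cleanly with $\beta_j,\gamma_j$ chosen by $\beta_1q_1'+1-q_1'=q/q_1$, $\beta_2q_1'=q/q_2$, $\gamma_1q_2'=q/q_1$, $\gamma_2q_2'+1-q_2'=q/q_2$. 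The detour through the diagonal is the missing idea.
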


\begin{proof}  We can assume, without loss of generality, that  $p_1\le p_2$ and also by truncating  $T(f_1, f_2)$ if necessary we can assume that, for every $y>0$, 
$$
\lambda_{T(f_1, f_2)}^w(y)<\infty.
$$

The proof follows two steps. First, we see that  the result is true for $q_1=q_2=p_2$ and then we extrapolate from this diagonal point to any other. We shall use the distribution formula of Theorem \ref{multi}. 

Let $w_1, w_2\in A_{p_2}$ and $w=w_1^{1/2}w_2^{1/2}$. Let us  start by estimating 
\begin{eqnarray*}
& &\int_{\mathbb R^n} f_1(x) M_\mu(w_1^{\beta_1}w_2^{\beta_2}
\chi_{\{ |T(f_1, f_2)|>y   \} } )(x) dx\\
&\le& \bigg( \int_{\mathbb R^n} f_1^{p_2} w_1 dx\bigg)^{1/p_2} 
 \bigg( \int_{\mathbb R^n} M_\mu(w_1^{\beta_1}w_2^{\beta_2}
\chi_{\{ |T(f_1, f_2)|>y   \} } )(x)^{p'_2}  w_1^{1-p'_2}dx\bigg)^{1/p'_2}.
\end{eqnarray*}
Now, if $w_1\in A_{p_2}$, then $w_1^{1-p'_2}\in A_{p'_2}$ and then $w_1^{1-p'_2}\in A_{\mu p'_2}$, for some $0<\mu<1$. 
Therefore, we can estimate the last term by
\begin{eqnarray*}
 \bigg( \int_{{\{ T(f_1, f_2)>y\}}} w_1 (x)^{\beta_1 p'_2+1- p'_2} w_2(x)^{\beta_2  p'_2} dx\bigg)^{1/ p'_2}\end{eqnarray*}
 and hence we have to choose the parameters involved in such a way that
\begin{equation}\label{EQ1}
\beta_1 p'_2+1-p'_2={1\over 2}, \qquad {\beta_2 p'_2} ={1\over 2}.
\end{equation}

Now,  we deal with the second term in Theorem \ref{multi}, 
\begin{eqnarray*}
& &\int_{\mathbb R^n} f_2(x)^{p_2/p_1}  v(x)^{s} M_\mu(v^{-s} w_1^{\gamma_1}w_2^{\gamma_2}
\chi_{\{ |T(f_1, f_2)|>y   \} } )(x) dx
\\
&\le&
\bigg(\int_{\mathbb R^n} f_2(x)^{p_2} w_2(x) dx\bigg)^{1/p_1}
\\
&\quad&\quad\times
\bigg(\int_{\mathbb R^n} w_2(x)^{1-p'_1} v(x)^{sp'_1} M_\mu(v^{-s} w_1^{\gamma_1}w_2^{\gamma_2}
\chi_{\{ |T(f_1, f_2)|>y   \} } )(x)^{p'_1} dx\bigg)^{1/p'_1}.
\end{eqnarray*}

Since $w_2\in A_{p_2}$, we have that there exist two weights in $A_1$ such that $w_2= w_{2,1}^{1-p_2} w_{2,2}$. Let us take $v= w_{2,1}$. Then, by definition of $s$ in Theorem \ref{multi}, 
$$
w_2(x)^{1-p'_1} v(x)^{sp'_1}= w_{2,1}(x)^{(1-p_2)(1-p'_1)}w_{2,2}(x)^{1-p'_1}w_{2,1}(x)^{ sp'_1}=
w_{2,2}(x)^{1-p'_1} w_{2,1}(x),
$$
and consequently $w_2(x)^{1-p'_1} v(x)^{sp'_1} \in A_{p'_1}$. Therefore, we can estimate the last term by 
$$
\bigg(\int_{\{ |T(f_1, f_2)|>y   \}} w_2(x)^{1-p'_1}  w_1^{p'_1\gamma_1}w_2^{p'_1 \gamma_2}
 dx\bigg)^{1/p'_1}.
$$
Hence,  we have to choose 
\begin{equation}\label{EQ2}
\gamma_2 p'_1+1-p'_1={1\over 2}, \qquad {\gamma_1 p'_1} ={1\over 2}. 
\end{equation}
Since equations (\ref{EQ1}) and (\ref{EQ2}) are compatible with the fact that 
$$
\beta_j {p\over p_1}+\gamma_j {p\over p_2}={1\over 2}, 
$$
we obtain,  by Theorem \ref{multi},  that 
\begin{eqnarray*}
\lambda^w_{T(f_1, f_2)}(y)& \lesssim& \lambda^w_{M_{\rho}f_1 M_{\rho}f_2}(y)
\\
&\quad&+ {1\over y^{p/p_1}}\bigg( \int_{\mathbb R^n} f_1^{p_2} w_1 dx\bigg)^{{p\over p_1p_2}}  \bigg( \int_{\mathbb R^n} f_2^{p_2} w_2 dx\bigg)^{{p\over p_1p_2}} 
\lambda^w_{T(f_1, f_2)}(y)^{1-{2p\over p_1p_2}},
\end{eqnarray*}
from which we easily obtain, taking $\rho$ sufficiently near 1 to have that $w_1\in A_{\rho p_1}$ and $w_2\in A_{\rho p_2}$,  that
$$
T:L^{p_2}(w_1)\times L^{p_2}(w_2)\longrightarrow L^{p_2/2, \infty}(w),
$$
is bounded and the first step is finished.

Now, we have to extrapolate from the diagonal point $(p_2, p_2)$ to any other point. In this case, we proceed exactly as before. In fact, this case is easier since  the parameter $s$ in Theorem \ref{multi} is zero. 

Taking now

\begin{equation*}
\beta_1 q_1'+1-q'_1={q\over q_1}, \qquad {\beta_2 q'_1} ={q\over q_2}
\end{equation*}
\begin{equation*}
\gamma_2 q_2'+1-q'_2={q\over q_2}, \qquad {\gamma_1 q'_2} ={q\over q_1},
\end{equation*}
we only have to see that these equations are compatible  with the fact that, in this case,  
$$
\beta_j+\gamma_j=2{q\over q_j},
$$
which is easy to see. 
\end{proof}

\begin{remark}
From the above result and applying Theorem \ref{multi}, we can deduce (taking $p_1=p_2=2$) that under the hypothesis of Theorem~\ref{rfml}, we have that, for every $0<\mu<1$,   every $u_1$, $u_2$ and $u=u_1^{\nu_1} u_2^{\nu_2}$:
\begin{eqnarray}\notag
\lambda^u_{T(f_1, f_2)}(y)& \lesssim& \lambda^u_{M_{\rho}f_1  M_{\rho}f_2}(y)
\\ \label{distmult2}
&\quad & +{1\over y^{1/2}}\bigg[ \bigg(\int_{\mathbb R^n} f_1(x)  M_\mu(u_1^{\beta_1}u_2^{\beta_2}
\chi_{\{ |T(f_1, f_2)|>y   \} } )(x) dx\bigg)^{1/2}
\\ \notag
&\quad&\quad\times
\bigg(\int_{\mathbb R^n} f_2(x) M_\mu(u_1^{\gamma_1}u_2^{\gamma_2}\chi_{\{ |T(f_1, f_2)|>y   \} } )(x)dx\bigg)^{1/2}\bigg],
\end{eqnarray}
where  $0<\rho< 1$, 
$$
{\beta_1 \over 2}+{\gamma_1  \over  2}={\nu_1}, 
\qquad\qquad
{\beta_2 \over 2}+{\gamma_2 \over 2}={\nu_2}.
$$
\end{remark} 

This formula is quite useful to obtain  new results concerning the three weights problem for multilinear operators. We omit the proof since it follows the standard technique already developed in the linear case.

\begin{definition} We say that a triple of weights $(u_1, u_2;v)\in A_{p_1, p_2}$ if 
the operator $M(f_1, f_2)= M(f_1)M( f_2)$
satisfies
$$
M: L^{p_1}(u_1)\times L^{p_2}(u_2)\longrightarrow L^{p, \infty} (v),
$$
where $1/p=1/p_1+1/p_2.$
\end{definition}

It is easy to see that, for example, if $(u_j,v)\in A_{p_j}$, then $(u_1,u_2;v)\in A_{p_1,p_2}$.

\begin{theorem}  Let $T$ be a multilinear operator satisfying the hypothesis of Theorem \ref{rfml} and let $q_1, q_2>1$. 
If  $(u_1, u_2;v)\in A_{\rho q_1, \rho q_2}$, for some $0<\rho<1$, and $(v^{1-q_j'}, u_j^{1-q_j'})\in S_{\mu q_j'}$, for some 
$0<\mu<1$ and $j=1,2$,  then
$$
\Vert T(f_1, f_2)\Vert _{L^{q,\infty}(v)} \lesssim \Vert f_1\Vert _{L^{q_1}(u_1)}\Vert f_2\Vert _{L^{q_2}(u_2)},
$$
where $1/q=1/q_1+1/q_2.$
\end{theorem}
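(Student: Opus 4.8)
The plan is to mimic exactly the linear two-weight arguments (Theorems~\ref{rfdp} and \ref{rfdp2}), using the multilinear distribution formula \eqref{distmult2} in place of \eqref{dist2}. First I would fix $f_1, f_2$ and (by the same truncation argument used in the proof of Theorem~\ref{rfml}) assume $\lambda^v_{T(f_1,f_2)}(y)<\infty$ for every $y>0$. Apply \eqref{distmult2} with $u=v$, $u_1=u_2=v$ (so $\nu_1=\nu_2=\tfrac12$, and one may take $\beta_1=\gamma_1=\beta_2=\gamma_2=\tfrac12$), which gives
\begin{equation*}
\lambda^v_{T(f_1,f_2)}(y)\lesssim \lambda^v_{M_\rho f_1 \, M_\rho f_2}(y)+\frac{1}{y^{1/2}}\,I_1(y)^{1/2}\,I_2(y)^{1/2},
\end{equation*}
where $I_j(y)=\int_{\R^n} f_j(x)\,M_\mu\big(v\,\chi_{\{|T(f_1,f_2)|>y\}}\big)(x)\,dx$.

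Next I would bound each of the three terms. For the first, since $(u_1,u_2;v)\in A_{\rho q_1,\rho q_2}$ means precisely $M(h_1)M(h_2):L^{\rho q_1}(u_1)\times L^{\rho q_2}(u_2)\to L^{\rho q,\infty}(v)$, and $M_\rho f_j=(M(f_j^{1/\rho}))^\rho$, a direct substitution gives
\begin{equation*}
\lambda^v_{M_\rho f_1\, M_\rho f_2}(y)\lesssim \frac{1}{y^{q}}\,\Vert f_1\Vert_{L^{q_1}(u_1)}^{q}\,\Vert f_2\Vert_{L^{q_2}(u_2)}^{q}.
\end{equation*}
For $I_j(y)$, apply Hölder with exponents $q_j, q_j'$ against the weight $u_j$:
\begin{equation*}
I_j(y)\le \Vert f_j\Vert_{L^{q_j}(u_j)}\bigg(\int_{\R^n} M_\mu\big(v\,\chi_{\{|T(f_1,f_2)|>y\}}\big)^{q_j'}\,u_j^{1-q_j'}\,dx\bigg)^{1/q_j'},
\end{equation*}
and then invoke the hypothesis $(v^{1-q_j'},u_j^{1-q_j'})\in S_{\mu q_j'}$, i.e. $M_\mu:L^{q_j'}(v^{1-q_j'})\to L^{q_j'}(u_j^{1-q_j'})$ (here one uses that $v=(v^{1-q_j'})^{1-(q_j')'}$, the natural duality), which yields
\begin{equation*}
I_j(y)\lesssim \Vert f_j\Vert_{L^{q_j}(u_j)}\,\bigg(\int_{\{|T(f_1,f_2)|>y\}} v\,dx\bigg)^{1/q_j'}=\Vert f_j\Vert_{L^{q_j}(u_j)}\,\lambda^v_{T(f_1,f_2)}(y)^{1/q_j'}.
\end{equation*}

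Combining, and writing $\Lambda=\lambda^v_{T(f_1,f_2)}(y)$, $A=\Vert f_1\Vert_{L^{q_1}(u_1)}\Vert f_2\Vert_{L^{q_2}(u_2)}$, I get
\begin{equation*}
\Lambda\lesssim \frac{A^{q}}{y^{q}}+\frac{A}{y^{1/2}}\,\Lambda^{\frac{1}{2}\left(\frac{1}{q_1'}+\frac{1}{q_2'}\right)}.
\end{equation*}
Since $\tfrac{1}{q_1'}+\tfrac{1}{q_2'}=2-\tfrac{1}{q_1}-\tfrac{1}{q_2}=2-\tfrac1q$, the exponent of $\Lambda$ on the right is $1-\tfrac{1}{2q}$, so multiplying through by $y^q$ gives $y^q\Lambda\lesssim A^q+A\,(y^q\Lambda)^{1-\frac{1}{2q}}$; because $\Lambda<\infty$ this self-improving inequality forces $y^q\Lambda\lesssim A^q$, i.e. $y\,\lambda^v_{T(f_1,f_2)}(y)^{1/q}\lesssim A$, which is the claimed weak-type bound. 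The only delicate point is bookkeeping the parameters in \eqref{distmult2}: one must check that the choice $\beta_j=\gamma_j=\tfrac12$ is admissible (it satisfies $\tfrac{\beta_j}{2}+\tfrac{\gamma_j}{2}=\tfrac12=\nu_j$) and that $\rho$ can be chosen close enough to $1$ so the $A_{\rho q_1,\rho q_2}$ hypothesis is the one invoked; everything else is the same Hölder-plus-$S_{p}$-plus-self-improvement scheme already used in the linear case, which is why the proof can legitimately be omitted as the authors do.
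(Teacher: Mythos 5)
Your approach is exactly the one the paper takes: apply the specialization of formula \eqref{distmult2} with $u_1=u_2=v$, bound the maximal-function term by the $A_{\rho q_1,\rho q_2}$ hypothesis, bound each factor $I_j(y)$ via H\"older against $u_j$ followed by the $S_{\mu q_j'}$ hypothesis, and close the self-improving inequality as in Theorem~\ref{rfdp}. (The paper's parameter choice $\beta_1=\gamma_2=1$, $\beta_2=\gamma_1=0$ gives the same $M_\mu(v\chi_{\cdots})$ factors as your symmetric choice $\beta_j=\gamma_j=\tfrac12$, since $u_1=u_2=v$.)

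One arithmetic slip: from $I_j(y)\lesssim \Vert f_j\Vert_{L^{q_j}(u_j)}\Lambda^{1/q_j'}$ the product
$I_1(y)^{1/2}I_2(y)^{1/2}$ gives $A^{1/2}\Lambda^{\frac12(1/q_1'+1/q_2')}$, not $A\Lambda^{\cdots}$. The correct inequality is therefore
\begin{equation*}
y^q\Lambda \lesssim A^q + A^{1/2}\,(y^q\Lambda)^{1-\frac{1}{2q}},
\end{equation*}
and the self-improvement gives $(y^q\Lambda)^{\frac{1}{2q}}\lesssim A^{1/2}$, i.e.\ $y^q\Lambda\lesssim A^q$, which is what you want. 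As you wrote it (with $A$ instead of $A^{1/2}$), the same absorption argument would only yield $(y^q\Lambda)^{\frac{1}{2q}}\lesssim A$, i.e.\ $y\Lambda^{1/q}\lesssim A^2$, which is the wrong power. So the factor $\tfrac12$ in the exponent of $A$ is not just bookkeeping; it is precisely what makes the exponents match. With that correction the argument is complete and coincides with the paper's.
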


\begin{proof} Taking in formula (\ref{distmult2}), $u_1=u_2=v=u$, $\nu_j={p\over p_j}$, we have that $\beta_1=\gamma_2=1$, $\beta_2=\gamma_1=0$ and obtain:
\begin{eqnarray}\notag
\lambda^v_{T(f_1, f_2)}(y)& \lesssim& \lambda^v_{M_{\rho}f_1  M_{\rho}f_2}(y)
\\ \notag
&\quad & +{1\over y^{1/2}}\bigg[ \bigg(\int_{\mathbb R^n} f_1(x)  M_\mu(v
\chi_{\{ |T(f_1, f_2)|>y   \} } )(x) dx\bigg)^{1/2}
\\ \notag
&\quad&\quad\times
\bigg(\int_{\mathbb R^n} f_2(x) M_\mu(v\chi_{\{ |T(f_1, f_2)|>y   \} } )(x)dx\bigg)^{1/2}\bigg],
\end{eqnarray}
and we now  
proceed as in the linear case (see Theorem \ref{rfdp}). 
\end{proof}

\bigbreak
 
 \centerline{\bf Extrapolation with two weights}

\ 

Using (\ref{dist5}) we can prove the following result for two weights which also holds in the non-diagonal case, (see \cite{n:n} for several results in  the diagonal case $p=q$). 

\begin{theorem} Let $f$ and $g$ be two positive functions satisfying the hypothesis of Theorem \ref{teo2pesos}.  Then, for every $1<p, q<\infty$ and every $(u,v)\in A_{p,q}$ such  that $(v^{1-q'}, u^{1-p'})\in S_{q', p'}$, we have that 
$$
\Vert g\Vert _{L^{q, \infty}(v)}\le C  \Vert f \Vert _{L^p(u)},
$$
with  $C$ depending on $\Vert (u,v)\Vert _{A_p}$.

\end{theorem}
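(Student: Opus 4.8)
The plan is to feed the target weight $v$ into the distribution inequality (\ref{dist5}) of Theorem~\ref{teo2pesos}, estimate the two resulting terms separately by Hölder's inequality together with the two hypotheses on the pair $(u,v)$, and close the argument by a self-improvement (absorption) step. By the standing reduction we may assume $g$ is bounded with compact support, so that $\lambda^v_g(y)<\infty$ for every $y>0$ by (\ref{finito}) (note $v$ is locally integrable since $(u,v)\in A_{p,q}$); in particular (\ref{dist5}) is available with the locally integrable weight $u$ there taken to be our $v$.

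From (\ref{dist5}) with $u$ replaced by $v$ we obtain $\lambda^v_g(y)\lesssim \lambda^v_{Mf}(y)+\frac1y\int_{\mathbb R^n}f(x)\,M(v\chi_{\{g>y\}})(x)\,dx$ (the coefficient $c_{p_0}$, equal to $1$ or $0$, is irrelevant). For the first term, $(u,v)\in A_{p,q}$ is by definition the boundedness $M\colon L^{p}(u)\to L^{q,\infty}(v)$, so $\lambda^v_{Mf}(y)\lesssim y^{-q}\Vert f\Vert_{L^{p}(u)}^{q}$. For the second term, I would split $f\cdot M(v\chi_{\{g>y\}})=(f\,u^{1/p})\,(M(v\chi_{\{g>y\}})\,u^{-1/p})$ and apply Hölder with exponents $p$ and $p'$, bounding it by $\Vert f\Vert_{L^p(u)}\big(\int_{\mathbb R^n}M(v\chi_{\{g>y\}})^{p'}u^{1-p'}\,dx\big)^{1/p'}$. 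Then the hypothesis $(v^{1-q'},u^{1-p'})\in S_{q',p'}$, which is exactly the boundedness $M\colon L^{q'}(v^{1-q'})\to L^{p'}(u^{1-p'})$, applied to the function $v\chi_{\{g>y\}}$ — which lies in $L^{q'}(v^{1-q'})$ because $\int(v\chi_{\{g>y\}})^{q'}v^{1-q'}=\int_{\{g>y\}}v=\lambda^v_g(y)<\infty$ — gives $\big(\int_{\mathbb R^n}M(v\chi_{\{g>y\}})^{p'}u^{1-p'}\,dx\big)^{1/p'}\lesssim\lambda^v_g(y)^{1/q'}$.

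Combining the three estimates yields $\lambda^v_g(y)\lesssim y^{-q}\Vert f\Vert_{L^p(u)}^q+y^{-1}\Vert f\Vert_{L^p(u)}\,\lambda^v_g(y)^{1/q'}$. Multiplying by $y^{q}$, using $q-1=q/q'$, and writing $A:=y^{q}\lambda^v_g(y)\in[0,\infty)$, this becomes $A\lesssim\Vert f\Vert_{L^p(u)}^q+\Vert f\Vert_{L^p(u)}\,A^{1/q'}$; since $1/q'<1$, examining the two cases according to which term on the right is larger forces $A\lesssim\Vert f\Vert_{L^p(u)}^q$, i.e. $y\,\lambda^v_g(y)^{1/q}\lesssim\Vert f\Vert_{L^p(u)}$ uniformly in $y$, which is the asserted bound. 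Tracking constants, $C$ depends only on the $A_{p,q}$-constant of $(u,v)$ and the $S_{q',p'}$-constant of $(v^{1-q'},u^{1-p'})$. The only delicate point is this last absorption: it is legitimate precisely because $A$ is a priori finite, which is why the reduction to $g$ bounded with compact support is used; everything else is a plain substitution into (\ref{dist5}) followed by Hölder's inequality and the two hypotheses.
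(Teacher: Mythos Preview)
Your proof is correct and follows essentially the same route as the paper: apply (\ref{dist5}) with the weight $v$, bound $\lambda^v_{Mf}(y)$ via the $A_{p,q}$ hypothesis, control the integral term by H\"older and the $S_{q',p'}$ hypothesis to get $\lambda^v_g(y)^{1/q'}$, and finish with the absorption argument using the a priori finiteness of $\lambda^v_g(y)$. The paper's proof is more terse but identical in substance; your additional care in justifying that $v\chi_{\{g>y\}}\in L^{q'}(v^{1-q'})$ and in spelling out the absorption step is welcome but introduces no new ideas.
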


\begin{proof} Using (\ref{dist5}), if $(u, v)\in A_{p,q}$, 
\begin{eqnarray*}
 \lambda^v_g(y)&\le&  \lambda^v_{Mf}(y) + {1\over y } \int_{\mathbb R^n} f(x) M (v \chi_{\{ g>y\}})(x) dx
\\
&\le&
\bigg({\Vert f\Vert _{L^p(u)}\over y}\bigg)^q
 + {1\over y}  \Vert f \Vert _{L^{p}(u)} \Vert M( v\chi_{\{ g>y\}})\Vert _{L^{p'}(u^{1-p'})}
 \\
&\le&
\bigg({\Vert f\Vert _{L^p(u)}\over y}\bigg)^q
 + {1\over y}  \Vert f \Vert _{L^{p}(u)} \lambda^v_g(y)^{1/q'},
\end{eqnarray*}
from which the result follows.
\end{proof}

 \subsection{Boundedness of operators defined in Rearrangement Invariant Banach Fun\-ction Spaces (RIBFS)  with weights}

 \ 
 
 \ 
  
 The results  in this subsection are closely related to those proved in \cite{CGMP}. Our main contribution is that we will only assume that (\ref{debil1}) holds for every $w\in A_{p_0}$, which should be compared with the hypothesis made in  \cite{CGMP}, where the couple of functions $(f,g)$ satisfies (\ref{main}) for every $w\in A_\infty$.

 Some standard definitions and notations on an rearrangement invariant space  $X$ (r.i.\ from now on),  that we  will need are the following (see \cite{BS}): $\bar X$ is the representation space such that $\Vert f\Vert _X=\Vert f^*\Vert _{\bar X}$,  $\varphi_X(t)=\Vert \chi_E\Vert _X$ is the fundamental function of $X$, where $E$ is any measurable set such that  $|E|=t$. $X'$ is the associate space defined by
$$
\Vert f\Vert _{X'}=\sup_{\Vert g\Vert _X\le 1} \bigg| \int_{\mathbb R^n} f(x) g(x) dx \bigg|. 
$$
Also, $p_X$ and $q_X$ are the lower and upper Boyd-indices defined by 
$$
p_X=\lim_{t\to\infty}{\log t\over \log h_X(t)},\qquad q_X=\lim_{t\to 0^+}{\log t\over \log h_X(t)},
$$
where $h_X(t)$ is the norm in $\bar X$ of the dilation operator $D_t(f)(s)=f(s/t)$, $0<t<\infty$. It is easy to see that if $X$ is a Banach space, then
\begin{equation}\label{funf}
\varphi_X(t)\varphi_{X'}(t)\approx t.
\end{equation} 
 
 From formula (\ref{dist2}) we can also obtain results concerning the boundedness of operators on r.i.\ spaces with weights. To this end, given an RIBFS $X$ let us recall that  the Hardy-Littlewood maximal operator is bounded in $X$ if and only if $p_X>1$ (see \cite[Theorem 5.17]{BS}). Also, recall that the Marcinkiewicz space is defined by
 $$
 \Vert f\Vert _{M_X}=\sup_{t>0} f^{**}(t) \varphi_X(t).
 $$
 
It is proved  in \cite{ms} that if $X$ is  a quasi-Banach rearrangement-invariant space with lower Boyd index $p_X$  and upper Boyd index $q_X$, and we define the operators
 
$$
S_p f (t) = \bigg( {1\over t} \int_0^t f^*(s)^{p}ds\bigg)^{1/p}
$$
and
$$
S^*_q f (t) = 
t^{-1/q}\bigg ( \int_t^\infty f^*(s)^q ds\bigg)^{1/q},
$$
then  $S:X\rightarrow X$  if and only 
if $p_X > p$ and  $S^*_q: X \rightarrow X$   if and only if $q_X < q$. 
 
 Let us now define $f^*_w$ to be the nonincreasing rearrangement of $f$ with respect to the measure $w(x)\,dx$, and 
 $$
 X(w)=\{ f; \ f^*_w\in \bar X\}.
 $$
 Then the following result follows (see \cite{CGMP} for related questions):
 
 \begin{theorem} 
  If $f$ and $g$ satisfy (\ref{debil1}) for every $w\in A_{p_0}$, then, for every  RIBFS $X$  such that $1<p_X\le q_X <\infty$, and every $w\in A_{p_X}$: 
 $$
\Vert g\Vert _{M_X(w)}\lesssim \Vert f\Vert _{X(w)},
$$
with constant depending on $\Vert w\Vert _{A_{p_X}}$. 
 
 \end{theorem}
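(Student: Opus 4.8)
The plan is to insert the distribution estimate (\ref{dist2}) of Theorem~\ref{teo2} into the rearrangement invariant machinery, bound the two resulting terms by $\|f\|_{X(w)}$, and then solve for $\lambda^w_g$ by a dichotomy. First we reduce. Since $1<p_X$, the lower fundamental index of $\varphi_X$ exceeds $1$, i.e.\ $\frac{\varphi_X(t)}{t}\int_0^t\frac{ds}{\varphi_X(s)}\lesssim1$ (equivalently, the averaging operator $S_1$ is bounded on the space quasi-normed by $\sup_t h^*(t)\varphi_X(t)$; see \cite{BS}), so
$$
\|g\|_{M_X(w)}=\sup_{t>0}g^{**}_w(t)\varphi_X(t)\ \lesssim\ \sup_{t>0}g^*_w(t)\varphi_X(t)\ =\ \sup_{y>0}y\,\varphi_X(\lambda^w_g(y)),
$$
and it suffices to bound $y\,\varphi_X(\lambda^w_g(y))$ by $\|f\|_{X(w)}$ uniformly in $y>0$. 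We may assume $p_0>1$, since when $p_0=1$, (\ref{debil1}) also holds for every $w\in A_{p_0'}$ and any $p_0'>1$, with the same $\varphi$, because $\|w\|_{A_{p_0'}}\le\|w\|_{A_1}$. Applying (\ref{dist2}) with $u=w$ and parameters $0<\delta,\mu<1$ to be fixed, we get for every $y>0$
$$
\lambda^w_g(y)\ \lesssim\ \lambda^w_{M_\delta f}(y)+\frac1y\int_{\R^n}f(x)\,M_\mu(w\chi_{\{g>y\}})(x)\,dx.
$$

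For the first term, $X\hookrightarrow M_X$ gives $\sup_{y>0}y\,\varphi_X(\lambda^w_{M_\delta f}(y))=\sup_{t>0}(M_\delta f)^*_w(t)\varphi_X(t)\le\|M_\delta f\|_{M_X(w)}\le\|M_\delta f\|_{X(w)}$. Writing $M_\delta f=(M(f^{1/\delta}))^\delta$ and letting $Z$ be the r.i.\ space with $\|k\|_Z=\||k|^\delta\|_X^{1/\delta}$, so that $\|M_\delta f\|_{X(w)}=\|M(f^{1/\delta})\|_{Z(w)}^\delta$, $\|f^{1/\delta}\|_{Z(w)}^\delta=\|f\|_{X(w)}$, and $p_Z=\delta p_X$, $q_Z=\delta q_X$, we have for $\delta$ close enough to $1$ that $1<p_Z\le q_Z<\infty$ and $w\in A_{p_Z}$ (by openness of $A_{p_X}$); hence $M\colon Z(w)\to Z(w)$ by the weighted boundedness of $M$ on r.i.\ spaces (see \cite{CGMP}), and therefore $\|M_\delta f\|_{X(w)}\lesssim\|f\|_{X(w)}$. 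For the second term, Hölder's inequality in $X(w)$ with respect to $w\,dx$ gives
$$
\frac1y\int_{\R^n}f\,M_\mu(w\chi_{\{g>y\}})\,dx=\frac1y\int_{\R^n}f\cdot\frac{M_\mu(w\chi_{\{g>y\}})}{w}\,w\,dx\ \le\ \frac{\|f\|_{X(w)}}{y}\,\Big\|\frac{M_\mu(w\chi_{\{g>y\}})}{w}\Big\|_{X'(w)}.
$$

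The heart of the matter is the estimate
$$
\Big\|\frac{M_\mu(w\chi_{E})}{w}\Big\|_{X'(w)}\ \lesssim\ \varphi_{X'}(w(E))\qquad\text{for every measurable }E,
$$
valid for a suitable $\mu<1$. This is the statement that the (quasi\nobreakdash-)sublinear operator $R_wh:=M_\mu(hw)/w$ maps $X'(w)$ into itself, applied to $h=\chi_E$ (note $\|\chi_E\|_{X'(w)}=\varphi_{X'}(w(E))$). Now $R_w$ is bounded on $L^q(w)$ exactly when $w^{1-q}\in A_{\mu q}$; since $w\in A_{p_X}$ lies in an open class, $w\in A_{q'}$ — equivalently $w^{1-q}\in A_q$ — for all $1<q<(p_X-\eta_0)'$, for some $\eta_0>0$, and hence $w^{1-q}\in A_{\mu q}$ for $\mu$ close enough to $1$. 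Since the Boyd indices of $X'$ are $p_{X'}=q_X'$ and $q_{X'}=p_X'$, with $1<q_X'\le p_X'<(p_X-\eta_0)'$, we may choose exponents $q_0<q_X'\le p_X'<q_1<(p_X-\eta_0)'$ on which $R_w$ is bounded, and Boyd's interpolation theorem (see \cite{BS}) yields $R_w\colon X'(w)\to X'(w)$. Taking $E=\{g>y\}$ and using $\varphi_{X'}(t)\approx t/\varphi_X(t)$ (by (\ref{funf})), the second term above is
$$
\frac1y\int_{\R^n}f\,M_\mu(w\chi_{\{g>y\}})\,dx\ \lesssim\ \frac{\|f\|_{X(w)}}{y}\cdot\frac{\lambda^w_g(y)}{\varphi_X(\lambda^w_g(y))}.
$$

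Collecting the estimates, there is a constant $C>0$ such that for every $y>0$
$$
\lambda^w_g(y)\ \le\ C\,\lambda^w_{M_\delta f}(y)+C\,\frac{\|f\|_{X(w)}}{y}\cdot\frac{\lambda^w_g(y)}{\varphi_X(\lambda^w_g(y))}.
$$
Fix $y$ with $\lambda^w_g(y)>0$. If the first summand on the right is the larger one, then $\lambda^w_g(y)\le2C\,\lambda^w_{M_\delta f}(y)$, and quasi-concavity of $\varphi_X$ gives $y\,\varphi_X(\lambda^w_g(y))\lesssim y\,\varphi_X(\lambda^w_{M_\delta f}(y))\le\|M_\delta f\|_{M_X(w)}\lesssim\|f\|_{X(w)}$. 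If instead the second summand is the larger one, then $\varphi_X(\lambda^w_g(y))\le2C\,\|f\|_{X(w)}/y$, i.e.\ $y\,\varphi_X(\lambda^w_g(y))\le2C\,\|f\|_{X(w)}$. In either case $y\,\varphi_X(\lambda^w_g(y))\lesssim\|f\|_{X(w)}$; taking the supremum over $y$ and invoking the reduction of the first paragraph completes the proof. The step I expect to be the main obstacle is the displayed key estimate: one must verify that the exponents straddling the Boyd indices of $X'$ lie inside the open range of $q$'s for which $w^{1-q}\in A_{\mu q}$ with $\mu<1$ — this is where the openness of $A_{p_X}$ and the full hypothesis $1<p_X\le q_X<\infty$ are used, and it is what pins down the admissible values of $\mu$ (and of $\delta$). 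The remaining steps are routine manipulations with quasi-concave functions and rearrangement invariant norms.
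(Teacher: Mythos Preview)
Your proof is correct and follows essentially the same route as the paper: apply the distribution estimate (\ref{dist2}), bound the maximal term via $M_\delta\colon X(w)\to X(w)$, and control the integral term by duality and the key estimate $\|w^{-1}M_\mu(w\chi_E)\|_{X'(w)}\lesssim\varphi_{X'}(w(E))$, which both you and the paper obtain by interpolating the $L^q(w)$-boundedness of $h\mapsto w^{-1}M_\mu(wh)$ across the Boyd range of $X'$ (the paper does this via $K$-functionals and Montgomery--Smith in its Theorem~\ref{bien}, you via Boyd's theorem directly). The only cosmetic differences are that you justify the reduction from $\|g\|_{M_X(w)}$ to $\sup_y y\,\varphi_X(\lambda^w_g(y))$ explicitly (the paper simply identifies them) and you close with a dichotomy where the paper writes ``from which it follows''.
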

 
 \begin{proof}   Since $w\in A_{p_X}$, we have that $w\in A_p$ for some $p<p_X$ and  hence it is known (see \cite{CRS}) that
 $$
 (M f)^*_w(t) \lesssim \bigg({1\over t} \int_0^t f^*_w(s)^{p} ds\bigg)^{1/p}.
 $$
 Now, using the result in \cite{ms} mentioned above, we have that since $p<p_X$
 $$
 \Vert Mf\Vert _{X(w)}\le \Vert S_p f^*_w\Vert _{\bar X}\le \Vert f^*_w\Vert _{\bar X} =\Vert f\Vert _{X(w)};
 $$
 that is,
 $$
 M:X(w)\longrightarrow X(w).
 $$
 By taking $\delta$ sufficiently near $1$ we have the same boundedness for $M_\delta$, and hence
 $$
 \lambda_{M_\delta f(y)}^w\lesssim \varphi_X^{-1}\bigg({\Vert f\Vert _{X(w)}\over y}\bigg). 
 $$
 
To estimate the second term in (\ref{dist2}) we use duality to conclude  
$$
\int_{\mathbb R^n} f(x) M_{\mu} (w \chi_{\{ g>y\}})(x) dx\le \Vert f\Vert _{X(w)} \Vert  w^{-1} M_{\mu} (w \chi_{\{ g>y\}})\Vert _{X'(w)}.
$$

Now we claim that 
\begin{equation}\label{mmu}
\Vert  w^{-1} M_{\mu} (w \chi_{\{ g>y\}})\Vert _{X'(w)}\lesssim \varphi_{X'}(\lambda_g^w(y)),
\end{equation}
and hence, using (\ref{funf}), we get
\begin{eqnarray*}
\lambda_g^w(y) &\lesssim&  \varphi_X^{-1}\bigg({\Vert f\Vert _{X(w)}\over y}\bigg) +{1\over y}  \Vert f\Vert _{X(w)} \varphi_{X'}(\lambda_g^w(y))
\\
&\lesssim&
 \varphi_X^{-1}\bigg({\Vert f\Vert _{X(w)}\over y}\bigg) +{1\over y}  \Vert f\Vert _{X(w)} {\lambda_g^w(y)\over \varphi_{X}(\lambda_g^w(y))},
\end{eqnarray*}
from which it follows that 
$$
\Vert g\Vert _{M_X(w)}=\sup_{y>0} y\varphi_X(\lambda_g^w(y))\lesssim \Vert f\Vert _{X(w)},
$$
as we wanted to see. 

To finish the proof we need to prove the claim (\ref{mmu}). This will be a consequence of the following more general theorem. 

  \end{proof}
  
  \begin{theorem}\label{bien}   For every RIBFS  $X$ such that $1<p_X\le q_X<\infty $ and every $w\in A_{p_X}$, there exists   $0<\mu<1 $, for which  the operator
  $ T(g)=w^{-1} M_\mu(w g)$ satisfies 
$$
  T:X'(w) \longrightarrow X'(w).
  $$

   \end{theorem}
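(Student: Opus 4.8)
The plan is to obtain the boundedness of $T_\mu(g)=w^{-1}M_\mu(wg)$ on $X'(w)$ by interpolating between two weighted Lebesgue estimates, via the Boyd interpolation theorem. Recall that the Boyd indices of the associate space are $p_{X'}=q_X'$ and $q_{X'}=p_X'$, so the hypothesis $1<p_X\le q_X<\infty$ becomes $1<p_{X'}\le q_{X'}<\infty$; moreover $X'(w)$ is a rearrangement invariant space over the non-atomic $\sigma$-finite measure space $(\mathbb R^n,w\,dx)$, with representation space $\overline{X'}$ (hence with Boyd indices $p_{X'},q_{X'}$). Since $w>0$ a.e.\ one has $T_\mu(g)=T_\mu(|g|)$, so we may restrict to $g\ge 0$; then $T_\mu$ is positively homogeneous and quasi-subadditive, with $T_\mu(g_1+g_2)\le 2^{1-\mu}\big(T_\mu(g_1)+T_\mu(g_2)\big)$ (from $(a+b)^{1/\mu}\le 2^{1/\mu-1}(a^{1/\mu}+b^{1/\mu})$ together with the subadditivity of $t\mapsto t^{\mu}$). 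Consequently, by Boyd's theorem it suffices to produce a single $\mu\in(0,1)$ and exponents $1<r_0<p_{X'}$ and $q_{X'}<r_1<\infty$ for which $T_\mu:L^{r_i}(w)\to L^{r_i}(w)$, $i=0,1$.

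The second step recasts weighted $L^r(w)$-boundedness of $T_\mu$ as a Muckenhoupt condition. Setting $h=(wg)^{1/\mu}$, which is a bijection of the cone of nonnegative measurable functions, one computes $\|g\|_{L^r(w)}^r=\int_{\mathbb R^n}h^{\mu r}\,w^{1-r}\,dx$ and $\|T_\mu g\|_{L^r(w)}^r=\int_{\mathbb R^n}(Mh)^{\mu r}\,w^{1-r}\,dx$. Hence $T_\mu$ is bounded on $L^r(w)$ if and only if $M$ is bounded on $L^{\mu r}(w^{1-r})$, which, by the Muckenhoupt characterization, happens precisely when $\mu r>1$ and $w^{1-r}\in A_{\mu r}$.

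The third step is the choice of $r_0,r_1,\mu$, and it rests on the elementary identity $(1-r)(1-r')=1$, which gives $(w^{1-r})^{1-r'}=w$, so that by $A_p$-duality $w^{1-r}\in A_r\iff w\in A_{r'}$. Choose any $r_0\in(1,p_{X'})$ (recall $p_{X'}=q_X'$): then $r_0'>q_X\ge p_X$, hence $w\in A_{p_X}\subset A_{r_0'}$ and so $w^{1-r_0}\in A_{r_0}$. By openness of the Muckenhoupt classes, $w\in A_{p_X}$ forces $w\in A_{p_X-\delta}$ for some $\delta\in(0,p_X-1)$; choosing $r_1\in\big(p_X',(p_X-\delta)'\big)$, a nonempty interval contained in $(q_{X'},\infty)$, gives $r_1'\in(p_X-\delta,p_X)$, hence $w\in A_{r_1'}$ and $w^{1-r_1}\in A_{r_1}$. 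Applying openness once more, now to the weights $w^{1-r_i}$, we obtain $\eta_i\in(0,r_i-1)$ with $w^{1-r_i}\in A_{r_i-\eta_i}$, $i=0,1$. Setting $\mu=\max\{1-\eta_0/r_0,\,1-\eta_1/r_1\}\in(0,1)$, we have $\mu r_i\ge r_i-\eta_i>1$ and $w^{1-r_i}\in A_{r_i-\eta_i}\subset A_{\mu r_i}$, so by the second step $T_\mu$ is bounded on $L^{r_0}(w)$ and on $L^{r_1}(w)$. Since $r_0<p_{X'}$ and $r_1>q_{X'}$, Boyd's interpolation theorem on the r.i.\ space $X'(w)$ yields $T_\mu:X'(w)\to X'(w)$.

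The delicate point is the simultaneous placement of $r_0$ and $r_1$: Boyd's theorem forces them into the ranges $(1,q_X')$ and $(p_X',\infty)$ dictated by the Boyd indices of $X'$, while at the same time $w^{1-r}$ must be a Muckenhoupt weight of index strictly below $r$. The identity $w^{1-r}\in A_r\iff w\in A_{r'}$ is exactly what reconciles these constraints against the single hypothesis $w\in A_{p_X}$, and the self-improvement of the $A_p$ classes supplies the room needed for the strict inequalities $\mu<1$ and $\mu r_i>1$. Everything else—the rearrangement computation of Step 2 and the verification of the quasi-subadditivity of $T_\mu$—is routine.
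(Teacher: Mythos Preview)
Your proof is correct and follows the same strategy as the paper's: obtain boundedness of $T_\mu$ on two weighted Lebesgue spaces $L^{r_0}(w)$ and $L^{r_1}(w)$ with $r_0<p_{X'}\le q_{X'}<r_1$ (via the equivalence $w^{1-r}\in A_r\iff w\in A_{r'}$ together with the openness of the $A_p$ classes), and then interpolate. The only cosmetic difference is that you invoke Boyd's theorem as a black box, whereas the paper carries out the interpolation explicitly through the Peetre $K$-functional and Montgomery-Smith's boundedness criteria for $S_{q_0}$ and $S^*_{q_1}$; you also track the choice of $\mu$ explicitly, while the paper argues first for $M$ and then remarks that the passage to $M_\mu$ for $\mu$ close to $1$ is immediate.
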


\begin{proof}  For simplicity, we shall give the proof for $M$ instead of $M_\mu$, but everything can be immediately checked for $M_\mu$ (where $\mu$ is chosen appropriately).  

First of all we observe that if $w\in A_s$, then $w^{1-s'}\in A_{s'}$ and hence  we have that
$T:L^{s'}(w)\longrightarrow L^{s'}(w)$. 

Now, since $w\in A_{p_X}$,   if $q_0'>q_X$,  then $w\in A_{q'_0}$ and we can also take $q_1'<p_X$ such that $w\in A_{q_1'}$.

Hence, we have that
$$
T:L^{q_0}(w) \longrightarrow L^{q_0}(w),
$$
and also
$$
T:L^{ q_1}(w) \longrightarrow L^{q_1}(w).
$$
Then
$$
K(Tf, t; L^{q_0}(w), L^{q_1}(w)) \lesssim K(f, t; L^{q_0}(w), L^{q_1}(w)),
$$
with $K$ the Peetre $K$-functional (see \cite[Definition 1.1]{BS}). Now, it is known that 
\begin{eqnarray*}
K(f, t; L^{q_0}(w), L^{q_1}(w))&\approx& \bigg(\int_0^{t^{q_1 q_0\over q_1-q_0}} f^*_w(s)^{q_0} ds\bigg)^{1/q_0}+
t\bigg(\int_{t^{q_1 q_0\over q_1-q_0}}^\infty f^*_w(s)^{q_1} {ds}\bigg)^{1/q_1}
\\
&\approx&
 t^{{q_1 \over q_1-q_0}}\Big(S_{q_0} f^*_w(t^{{q_1 q_0\over q_1-q_0}})+ S^*_{ q_1} f^*_w (t^{{q_1 q_0\over q_1-q_0}})\Big),
\end{eqnarray*}
  (this is a consequence of  \cite[Theorem 2.1]{BS}). Now, observe that, for every decreasing  function $h$
 $$
 h (t)\le
 S_{q_0} h(t)+ S^*_{ q_1} h(t),
$$
and hence, since $p_{X'}>q_0$ and $q_1>q_{X'}=p_X'$
\begin{eqnarray*}
\Vert Tf\Vert _{X'(w)}&=& \Vert (Tf)^*_w\Vert _{X'}\le \Vert S_{q_0} (Tf)^*_w+ S^*_{q_1}(Tf)^*_w\Vert _{X'}
\\
&\le&  \Vert S_{q_0} f^*_w+S^*_{ q_1}f^*_w\Vert _{X'}\lesssim \Vert f^*_w\Vert _{X'}= \Vert f\Vert _{X'(w)},
\end{eqnarray*}
as we wanted to see. 
\end{proof}

\begin{remark} Observe that from the above proof we have, in fact,  that if $u\in A_{p_0}$ and
$p_0<p_X\le q_X<p_1$, then $X(u)$ is an interpolation space between $L^{p_0}(u)$ and $L^{p_1}(u)$
and hence, we could apply Rubio de Francia's extrapolation theorem to deduce the following stronger result (see also \cite{cmpn:cmpn}). 

\end{remark}

\medskip

\begin{theorem} If $T$ is a sublinear operator such that
$$
T:L^{p_0}(w)\longrightarrow L^{p_0, \infty}(w),
$$
for every $w\in A_{p_0}$, then for every RIBFS $X$  such that $1< p_X\le q_X<\infty$ and every $w\in A_{p_X}$:
$$
T:X(w)\longrightarrow X(w).
$$
\end{theorem}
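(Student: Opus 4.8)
The plan is to derive this strong-type boundedness from the weak-type hypothesis by the standard interpolation-with-extrapolation scheme laid out in the preceding remark. First I would apply the weak-type extrapolation theorem (Theorem~\ref{rf1} with the pair $(f,g)=(f,Tf)$, which is legitimate since the hypothesis is exactly \eqref{debil1} for every $w\in A_{p_0}$) to obtain, for every $w\in A_{p_1}$, the weak-type bound $T:L^{p_1}(w)\to L^{p_1,\infty}(w)$, where $p_1$ is chosen with $q_X<p_1<\infty$. Combined with the original hypothesis at $p_0$, pick a fixed $p_0$ with $1\le p_0<p_X$ (if the given $p_0$ is already $<p_X$ keep it, otherwise re-extrapolate down to such a value via Theorem~\ref{rf1} or Theorem~\ref{teo2}); then for a fixed $w\in A_{p_X}$ we have $w\in A_{p_0}$ as well, so $T$ is simultaneously of weak type $(p_0,p_0)$ and $(p_1,p_1)$ with respect to $w(x)\,dx$.

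Next I would invoke the interpolation identification established in the proof of Theorem~\ref{bien}: since $w\in A_{p_X}$ and $p_0<p_X\le q_X<p_1$, the space $X(w)$ is an interpolation space between $L^{p_0}(w)$ and $L^{p_1}(w)$. Indeed that proof shows, via the $K$-functional formula
$$
K(f,t;L^{p_0}(w),L^{p_1}(w))\approx t^{\frac{p_1}{p_1-p_0}}\Big(S_{p_0}f^*_w+S^*_{p_1}f^*_w\Big)\big(t^{\frac{p_1p_0}{p_1-p_0}}\big),
$$
together with the boundedness of $S_{p_0}$ and $S^*_{p_1}$ on $X'$ (which holds precisely because $p_{X'}>p_0$ and $q_{X'}=p_X'<p_1$, equivalently $p_X>p_0$ and $q_X<p_1$), that $\|f\|_{X(w)}\approx\|K(f,\cdot;L^{p_0}(w),L^{p_1}(w))\text{-type expression}\|$, so $X(w)$ arises by a $K$-method construction from the couple $(L^{p_0}(w),L^{p_1}(w))$ and is therefore an (exact, up to constants) interpolation space.

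Finally, since $T$ is sublinear and of weak types $(p_0,p_0)$ and $(p_1,p_1)$ with respect to the measure $w\,dx$, the (extended Marcinkiewicz / Calder\'on) interpolation theorem for $K$-monotone spaces gives $T:X(w)\to X(w)$ boundedly, with constant controlled by the two weak-type constants and hence by $\|w\|_{A_{p_X}}$ (through $\|w\|_{A_{p_0}}$ and $\|w\|_{A_{p_1}}$, both of which are dominated by a function of $\|w\|_{A_{p_X}}$ by the openness of the $A_p$ classes). I expect the main obstacle to be purely bookkeeping: making the choice of $p_0$ and $p_1$ uniform enough that all the implied constants depend only on $\|w\|_{A_{p_X}}$, and citing the correct form of the interpolation theorem for the nonlinear operator $T$ on the (possibly only quasi-Banach, if $X$ is merely an r.i.\ space) interpolation space $X(w)$; the analytic content has already been done inside Theorem~\ref{bien}.
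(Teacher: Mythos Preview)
Your approach is exactly the one the paper sketches in the remark immediately preceding this theorem: extrapolate via Theorem~\ref{rf1} to weak-type bounds at two exponents $p_0<p_X\le q_X<p_1$, recognize $X(w)$ as an interpolation space for the couple $(L^{p_0}(w),L^{p_1}(w))$ using the $K$-functional computation from the proof of Theorem~\ref{bien}, and then interpolate. One slip to correct: the $A_p$ inclusion runs the other way ($A_{p_0}\subset A_{p_X}$ when $p_0<p_X$), so you must first fix $w\in A_{p_X}$ and \emph{then} invoke the openness property to choose $p_0<p_X$ with $w\in A_{p_0}$ (you cite openness later, but the quantifier order in your first paragraph is off); also, since it is $X(w)$ rather than $X'(w)$ you want as the interpolation space, the operators $S_{p_0}$ and $S^*_{p_1}$ should be taken bounded on $\bar X$, the relevant conditions being simply $p_X>p_0$ and $q_X<p_1$.
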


\bigskip

\subsection{Boundedness of operators in Rearrangement Invariant Spaces}

\

\ 

In this Section we consider the case $u=1$ and the goal is to prove the boundedness on r.i.\ spaces for an operator $T$ such that
$$
T:L^{p_0}(w) \longrightarrow L^{p_0, \infty}(w),
$$
for every $w\in A_{p_0}$. Observe that, in this case, using (\ref{distF}) and the fact that
$$
Mf^*(t)\approx {1\over t} \int_0^t f^*(s)\, ds,
$$
we have  that, for every $s>1$, 
\begin{equation}\label{rii}
\lambda_g(y)\lesssim  \lambda_{Mf}(y) + {1\over y^{s}} \int_0^\infty  f^*(t)^{s}\min\Big( {\lambda_g(y)\over t}, 1\Big)^{\mu} dt,
\end{equation}
and also, by  (\ref{dist2})
$$
\lambda_g(y)\lesssim  \lambda_{M_\delta f}(y) + {1\over y} \int_0^\infty  f^*(t) \min\Big( {\lambda_g(y)\over t}, 1\Big)^{\mu} dt.
$$

The parameter $s$ can be taken bigger than zero if we have the boundedness for every $w\in A_\infty$.

\begin{theorem}\label{324} Let $X$ be a quasi-Banach r.i.\ space satisfying that

\noindent
(i) $\varphi_{X'}(t) \lesssim {t\over  \varphi_{X}(t) }$.

\noindent
(ii)  There exists $0<\delta<1$ such that $M_{\delta}:X\rightarrow M^\infty(X)$, where 
$$
\Vert f\Vert _{M^\infty(X)}=\sup_{t>0} t \varphi_X (\lambda_f(t))=\sup_{s>0} \varphi_X(s) f^*(s).
$$
 
 \noindent
(iii)  There exists $0<\mu<1$ such that
$$
\bigg\Vert{1\over (1+{\cdot \over s})^\mu}\bigg\Vert_{\bar {X'}} \lesssim \varphi_{X'}(s). 
$$
Then, if $(f,g)$ satisfies (\ref{debil1}) for some $1\le p_0<\infty$, we have that
$$
\Vert g\Vert _{M^\infty(X)}\lesssim \Vert f\Vert _{X}.
$$
\end{theorem}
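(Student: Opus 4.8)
The plan is to start from the distribution-function estimate in the form (\ref{dist2}) (or its $s>1$ analogue (\ref{distF}) when $p_0>1$, which the hypotheses via $p_X>1$ will permit), specialized to the unweighted case $u=1$. This gives, using $Mf^*(t)\approx\frac1t\int_0^t f^*$ and the decreasing rearrangement of $M_\mu(\chi_{\{g>y\}})$, exactly inequality (\ref{rii}), namely
\begin{equation*}
\lambda_g(y)\lesssim \lambda_{M_\delta f}(y)+\frac1y\int_0^\infty f^*(t)\,\min\Big(\frac{\lambda_g(y)}{t},1\Big)^{\mu}\,dt.
\end{equation*}
The goal is then to bound $y\,\varphi_X(\lambda_g(y))$ uniformly in $y$, i.e.\ to control $\Vert g\Vert_{M^\infty(X)}$.

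First I would handle the first term: by hypothesis (ii), $M_\delta\colon X\to M^\infty(X)$, so $\lambda_{M_\delta f}(y)\lesssim \varphi_X^{-1}\big(\Vert f\Vert_X/y\big)$ — equivalently $y\,\varphi_X(\lambda_{M_\delta f}(y))\lesssim\Vert f\Vert_X$. Next, for the second (integral) term I would use the Hölder-type duality for r.i.\ spaces: writing $\lambda=\lambda_g(y)$,
\begin{equation*}
\int_0^\infty f^*(t)\,\min\Big(\frac{\lambda}{t},1\Big)^{\mu}dt\le \Vert f^*\Vert_{\bar X}\,\Big\Vert \min\big(\tfrac{\lambda}{\cdot},1\big)^{\mu}\Big\Vert_{\bar X'}=\Vert f\Vert_X\,\Big\Vert \big(1+\tfrac{\cdot}{\lambda}\big)^{-\mu}\Big\Vert_{\bar X'},
\end{equation*}
since $\min(\lambda/t,1)^\mu\le (1+t/\lambda)^{-\mu}$ up to a constant. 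Now hypothesis (iii) applied with $s=\lambda$ gives $\big\Vert(1+\cdot/\lambda)^{-\mu}\big\Vert_{\bar X'}\lesssim\varphi_{X'}(\lambda)$, and hypothesis (i) gives $\varphi_{X'}(\lambda)\lesssim \lambda/\varphi_X(\lambda)$. Combining, the integral term is $\lesssim \Vert f\Vert_X\,\lambda_g(y)/\varphi_X(\lambda_g(y))$.

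Putting the two estimates together yields
\begin{equation*}
\lambda_g(y)\lesssim \varphi_X^{-1}\Big(\frac{\Vert f\Vert_X}{y}\Big)+\frac1y\,\Vert f\Vert_X\,\frac{\lambda_g(y)}{\varphi_X(\lambda_g(y))}.
\end{equation*}
The standard absorption/bootstrap argument finishes: multiply through by $\varphi_X(\lambda_g(y))/\lambda_g(y)$ (using that $\varphi_X$ is increasing and $t/\varphi_X(t)$ essentially increasing, so $\varphi_X(\lambda_g(y))\cdot\varphi_X^{-1}(\Vert f\Vert_X/y)\lesssim \lambda_g(y)\cdot(\Vert f\Vert_X/y)/\Vert f\Vert_X\cdot\ldots$), to split into the two regimes according to which term dominates; in either case one concludes $y\,\varphi_X(\lambda_g(y))\lesssim\Vert f\Vert_X$, i.e.\ $\Vert g\Vert_{M^\infty(X)}\lesssim\Vert f\Vert_X$. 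One should keep in mind the reduction (noted at the start of Section~\ref{S:main}) that $g$ may be assumed bounded with compact support so that $\lambda_g(y)<\infty$ and the absorption is legitimate.

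The main obstacle I anticipate is not any single deep step but rather the careful bookkeeping in the final absorption argument in a \emph{quasi}-Banach r.i.\ space: one must make sure that $\varphi_X$ and $t/\varphi_X(t)$ have the needed monotonicity/doubling properties so that the two-regime split is valid, and that the passage from (\ref{dist2})/(\ref{distF}) to (\ref{rii}) (which needs $w=1$, the rearrangement identity for $M$, and $(M_\mu\chi_E)^*(t)\approx\min(|E|/t,1)^\mu$) is justified; choosing the parameter $\mu$ in (\ref{rii}) to match the one in hypothesis (iii), and $\delta$ to match hypothesis (ii), is where the flexibility in the distribution formulas of Section~\ref{S:main} is essential.
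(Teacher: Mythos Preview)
Your proposal is correct and follows essentially the same route as the paper's proof: both start from (\ref{dist2}) with $u=1$, control the first term via hypothesis (ii), bound the integral term by duality using (iii) and (i) to get $\big\Vert\min(\lambda/\cdot,1)^\mu\big\Vert_{\bar X'}\lesssim\lambda/\varphi_X(\lambda)$, and conclude by the absorption argument (legitimate after the reduction to $g$ bounded with compact support). The only cosmetic difference is that the paper bounds $\big\Vert\min(s/\cdot,1)^\mu\big\Vert_{\bar X'}$ by splitting into $\chi_{(0,s)}$ and $(s/t)^\mu\chi_{(s,\infty)}$, whereas you use the pointwise domination $\min(s/t,1)^\mu\lesssim(1+t/s)^{-\mu}$ and apply (iii) directly; both are equivalent.
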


\begin{proof} By conditions (iii) and (i) we have that (the norms are taken with respect to the $t$ variable):
\begin{eqnarray*}
 \Big\Vert\min\Big({s\over t}, 1\Big)^\mu \Big\Vert_{\bar X'}&= & \Big\Vert \chi_{(0,s)}(t)+ \Big({s\over t}\Big)^\mu \chi_{(s, \infty}(t)\Big\Vert_{\bar X'}
\\
&\lesssim&  \varphi_{X'}(s)+ \Big\Vert\Big({s\over t}\Big)^\mu \chi_{(s, \infty}(t)\Big\Vert_{\bar X'}
\lesssim\varphi_{X'}(s)\lesssim {s\over  \varphi_{X}(s) },
\end{eqnarray*}
and  by (ii)  $\lambda_{M_\delta f}(y)\lesssim \varphi_X^{-1}\bigg({\Vert f\Vert _X\over y}\bigg)$. Hence using formula (\ref{dist2})  we obtain that 
$$
\lambda_g(y) \lesssim \varphi_X^{-1}\bigg({\Vert f\Vert _X\over y}\bigg)+ {\Vert f\Vert _X\over y}{\lambda_g(y)\over  \varphi_{X}(\lambda_g(y))},
$$ 
from which the result follows. 
\end{proof}

\medskip

\begin{remarks}\ 

\begin{enumerate}

\item[{(a)}] Conditions (i) and (iii) could be replaced by:

\medskip
\noindent
(i') There exists $0<\mu<1$ such that
$$
\Big\Vert\min\Big({s\over t}, 1\Big)^\mu \Big\Vert_{X'}\lesssim {t\over  \varphi_{X}(t) }. 
$$

\item[{(b)}] Condition (iii) implies that $(1+t)^{-\mu}\in \bar {X'}$, which is equivalent to the embedding $(L^\infty\cap L^{1/\mu, \infty})\subset X'$. 

\item[{(c)}] Condition (i) always holds if $X$ is Banach (see (\ref{funf})). 
\end{enumerate}
\end{remarks}

Let us now consider  the particular case of weighted Lorentz spaces (see \cite{CRS}): 
 $$
 \Lambda^p(w)=\bigg\{ f; \Vert f\Vert _{\Lambda^p(w)}= \bigg(\int_0^\infty f^*(t)^p w(t) dt\bigg)^{1/p}<\infty\bigg\},
 $$
 with $0<p<\infty$. In this case 
 $$
 M^\infty (X)=\Lambda^{p,\infty}(w)=\bigg\{ f; \Vert f\Vert _{\Lambda^{p,\infty}(w)}= \sup_{t>0} f^*(t) W^{1/p}(t) <\infty\bigg\},
 $$
 with $W(t)=\int_0^t w(s) ds$.
 
 \, 
 
 The boundedness of the Hardy-Littlewood maximal operator on the weighted Lorentz spaces was first characterized by M.A.\ Ari\~{n}o and B.\ Muckenhoupt in \cite{ArMu} by the condition that $w\in B_p$; that is,
 $$
 r^p \int_r^\infty \frac{w(t)}{t^p} dt \lesssim \int_0^r w(t) dt. 
 $$
 
 In fact, the result holds true for every $p>0$. 
 
 \, 
 
 This class of weights has many properties in common with the class $A_p$. In particular, if $w\in B_p$, there exists $\varepsilon>0$ such that $w\in B_{p-\varepsilon}$. 
 
 Another important condition for us proved in \cite{s1:s1} is that, if $p> 1$,  $w\in B_p$ if and only if $\Lambda^p(w)$ is a Banach space. Also, several equivalent characterizations of $B_p$ weights are given in \cite{so:so}.

  \begin{theorem} Let  $T$ be an operator such that 
$$
T:L^{p_0}(u) \longrightarrow L^{p_0, \infty}(u),
$$
for every $u\in A_{p_0}$, with constant depending on $\Vert u\Vert _{A_{p_0}}$.  If 

\begin{enumerate}
\item[{(a)}] $1<p<\infty$,  $w\in B_p$, $W(\infty)=\infty$ and for some $0<\mu<1$, 
 \begin{equation}\label{cond}
 \bigg(  \int_s^\infty \bigg({t^{1-\mu} \over W(t)} \bigg)^{p'} w(t) dt\bigg)^{1/p'}\lesssim {s^{1-\mu}\over W^{1/p}(s)}, 
  \end{equation}
  \end{enumerate}
  or 
  \begin{enumerate}
  \item[{(b)}]  $0<p\le 1$ and $w$ satisfies that, for some $0<\delta<1$, $W^{1/p}(t)/t^{\delta}$ is quasi-decreasing and for some $0<\mu<1$, $W^{1/p}(t)/t^{1-\mu}$ is quasi-increasing,
    \end{enumerate}
    then,
$$
T:\Lambda^{p}(w) \longrightarrow \Lambda^{p,\infty}(w) 
$$
is bounded.

  \end{theorem}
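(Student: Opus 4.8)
The plan is to apply the distribution-function estimate \eqref{rii} (which is valid here because the hypothesis \eqref{debil1} holds for every $w\in A_{p_0}$) with $u=1$, $g=Tf$, and then bound each of the two resulting terms by $\big(\Vert f\Vert_{\Lambda^p(w)}/y\big)$ to an appropriate power, exactly in the spirit of the proof of Theorem~\ref{324}. Concretely, in case (a) I would take $s$ slightly bigger than $1$ so that $w\in B_{p/s}$ (possible since $B_p$ is open from below), use the Ari\~no--Muckenhoupt theorem to control the maximal term $\lambda_{M_\delta f}(y)$, and use the weighted Hardy-type inequality \eqref{cond} to control the integral term involving $\min(\lambda_g(y)/t,1)^\mu$.

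First I would treat the maximal term. Since $w\in B_p$ there is $\e>0$ with $w\in B_{p-\e}\supset B_{p/s}$ for $s$ close to $1$, so $M\colon\Lambda^{p/s}(w)\to\Lambda^{p/s,\infty}(w)$ and, taking $\delta$ near $1$, also $M_\delta$ maps $\Lambda^p(w)$ boundedly into $\Lambda^{p,\infty}(w)$; hence $\lambda_{M_\delta f}(y)\lesssim \big(\Vert f\Vert_{\Lambda^p(w)}/y\big)^p/W^{-1}$, more precisely $W(\lambda_{M_\delta f}(y))\lesssim (\Vert f\Vert_{\Lambda^p(w)}/y)^p$. For the integral term I would write, using $Mf^*(t)\approx \frac1t\int_0^t f^*$, the estimate from \eqref{rii} and then H\"older's inequality in the variable $t$ with exponents $p/s$ and $(p/s)'$ against the weight $w$: denoting $R=\lambda_g(y)$,
\[
\int_0^\infty f^*(t)^s \min\Big(\frac{R}{t},1\Big)^{\mu}dt
\le \Big(\int_0^\infty f^*(t)^p w(t)\,dt\Big)^{s/p}\Big(\int_0^\infty \min\Big(\frac Rt,1\Big)^{\mu (p/s)'} w(t)\,dt\Big)^{1-s/p}.
\]
Splitting $\int_0^\infty$ at $t=R$ and using \eqref{cond} (applied with the exponent $\mu$ replaced by a suitable $\mu'$, and with $p'$ replaced by $(p/s)'$, which only requires rechoosing $\mu$) gives that the last factor is $\lesssim \big(R/W(R)^{s/p}\big)^{1-s/p}\cdot$(lower order), so altogether
\[
\lambda_g(y)=R \lesssim \Big(\frac{\Vert f\Vert_{\Lambda^p(w)}}{y}\Big)^p + \frac{1}{y^s}\Vert f\Vert_{\Lambda^p(w)}^{s}\,\frac{R}{W(R)^{s/p}} .
\]
Since $W(\infty)=\infty$ and we may assume (by the truncation reduction at the start of Section~\ref{S:main}) that $R<\infty$, the second term can be absorbed on the left when $y W(R)^{1/p}$ is large, which yields $W(\lambda_g(y))\lesssim (\Vert f\Vert_{\Lambda^p(w)}/y)^p$, i.e.\ $\sup_y y\,W^{1/p}(\lambda_{Tf}(y))=\Vert Tf\Vert_{\Lambda^{p,\infty}(w)}\lesssim\Vert f\Vert_{\Lambda^p(w)}$.

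For case (b), $0<p\le1$, I would instead start from the $A_\infty$-free argument, using \eqref{dist2} with $M_\delta$ and the quasi-monotonicity hypotheses on $W^{1/p}(t)/t^\delta$ and $W^{1/p}(t)/t^{1-\mu}$: the first makes $\Lambda^p(w)$ behave like a space on which a suitable $M_\delta$ is bounded into weak-$\Lambda^p(w)$, and the second plays the role that \eqref{cond} plays in case (a) when estimating $\int_0^\infty f^*(t)\min(R/t,1)^\mu dt$ via the ``Marcinkiewicz'' bound $f^*(t)\le \Vert f\Vert_{\Lambda^p(w)}/W^{1/p}(t)$. The main obstacle I anticipate is precisely the bookkeeping of the exponents $\mu,\delta,s$: one must verify that the auxiliary $\mu$ appearing in the term $M_\mu(u\chi_{\{g>y\}})$ of \eqref{dist2}/\eqref{rii} can be chosen small enough (and $s$, $\delta$ close enough to their limits) so that the weighted Hardy inequality \eqref{cond} (resp.\ the quasi-increasing hypothesis in (b)) applies with the shifted exponents, and that the self-improvement $w\in B_{p-\e}$ gives enough room; once the parameters are aligned, each individual estimate is a routine one-line Hardy-type computation and the absorption step is standard.
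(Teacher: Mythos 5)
Your high-level strategy -- start from the distribution estimate, bound the maximal term via the Ari\~no--Muckenhoupt theorem and the integral term via the structural hypothesis on $w$, then absorb -- is the right one, and you correctly identify that Theorem~\ref{324} is the relevant abstract framework. However, you then deviate from it in a way that introduces real problems. The paper's proof simply verifies conditions (i), (ii), (iii) of Theorem~\ref{324} for $X=\Lambda^p(w)$: condition (ii) follows from the $B_p$ openness ($w\in B_{\delta p}$ for some $\delta<1$ gives $M_\delta\colon\Lambda^p(w)\to\Lambda^{p,\infty}(w)$); condition (i) is automatic since $\Lambda^p(w)$ is Banach when $w\in B_p$; and condition (iii) is verified by computing $\Vert (1+\cdot/s)^{-\mu}\Vert_{\bar X'}$ via \emph{Sawyer's duality formula} for the associate norm of $\Lambda^p(w)$ (using $W(\infty)=\infty$), splitting the resulting integral at $t=s$ and using $w\in B_p$ on $(0,s)$ and exactly the hypothesis \eqref{cond} on $(s,\infty)$. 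Case (b) is analogous using the Carro--Soria description of the associate norm when $0<p\le 1$.

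Your version replaces this duality computation by applying \eqref{rii} with $s>1$ and then a weighted H\"older inequality, and there is a concrete error there. The inequality
\[
\int_0^\infty f^*(t)^s \min\Big(\tfrac{R}{t},1\Big)^{\mu}dt
\le \Big(\int_0^\infty f^{*p} w\,dt\Big)^{s/p}\Big(\int_0^\infty \min\Big(\tfrac Rt,1\Big)^{\mu (p/s)'} w\,dt\Big)^{1-s/p}
\]
is not H\"older: to have $w$ in the first factor one must insert $w^{s/p}\cdot w^{-s/p}$, and the second factor then carries $w^{1-(p/s)'}=w^{-s/(p-s)}$, i.e.\ a \emph{negative} power of $w$, not $w$ itself. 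Consequently, the subsequent step -- ``using \eqref{cond} with $p'$ replaced by $(p/s)'$, which only requires rechoosing $\mu$'' -- does not apply: \eqref{cond} is an inequality involving $w$ to power one, and there is no stated self-improvement of \eqref{cond} in both the exponent $p'$ and the power of $w$ simultaneously. This is precisely why the paper keeps $s=1$ and estimates the integral term by the genuine dual norm $\Vert f\Vert_{\Lambda^p(w)}\,\Vert\min(R/\cdot,1)^\mu\Vert_{\Lambda^p(w)'}$, for which Sawyer's formula is available and matches \eqref{cond} exactly. (As a side remark, the route you are gravitating toward -- $s>1$ plus an estimate for $\Vert\min(1,s/t)^\mu\Vert_{\Lambda^r(w)^*}$ -- is essentially the \emph{next} theorem in the paper, with hypothesis \eqref{normmin} instead of \eqref{cond}.)

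Two smaller points: the inclusion should read $B_{p-\e}\subset B_{p/s}$ (larger index means larger class), although your conclusion $w\in B_{p/s}$ is right; and in the displayed absorption inequality the first term should be $W^{-1}\big((\Vert f\Vert_{\Lambda^p(w)}/y)^p\big)$, not $(\Vert f\Vert_{\Lambda^p(w)}/y)^p$, consistent with the correct statement $W(\lambda_{M_\delta f}(y))\lesssim(\Vert f\Vert/y)^p$ that you wrote a line earlier. Finally, case (b) is only sketched at the level of a plan; the actual verification there uses the explicit formula for $\varphi_{X'}$ when $W^{1/p}(t)/t$ is decreasing and the quasi-monotonicity of $W^{1/p}(t)t^{\mu-1}$ to bound $\sup_{r>0}s\big[(1+r/s)^{1-\mu}-1\big]/W^{1/p}(r)$, none of which appears in your proposal.
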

  
  \begin{proof}   
  
  \noindent
  (a) Since $w\in B_p$, we have that there exists ${1\over p}<\delta<1$ such that $w\in B_{\delta p}$ and hence  $M_\delta: \Lambda^p(w)\longrightarrow  \Lambda^{p,\infty}(w)$ (see \cite{CRS}); that is, condition (ii) on Theorem~\ref{324} is satisfied. Now, since $w\in B_p$, the space $X=\Lambda^p(w)$ is Banach and hence also condition (i) is satisfied. 
  
  To study condition (iii) we need to use Sawyer's formula \cite{s1:s1} and since $W(\infty)=\infty$, we have that 
  \begin{eqnarray*}
\bigg\Vert{1\over (1+{t\over s})^\mu}\bigg\Vert_{X'}&=&\sup_{f \downarrow} {\int_0^\infty {f(t)\over (1+{t\over s})^\mu}dt\over \bigg(\int_0^\infty f^p(s) w(s) ds\bigg)^{1/p}}
\\
& \approx&  \bigg(\int_0^\infty \bigg({s\Big[(1+{t\over s})^{1-\mu}-1\Big]\over W(t)}\bigg)^{p'} w(t) dt \bigg)^{1/p'}.
   \end{eqnarray*}
 Now, since $w\in B_p$ (see \cite{so:so})  we have that 
 $$
 \int_0^s \bigg({s\Big[(1+{t\over s})^{1-\mu}-1\Big]\over W(t)}\bigg)^{p'} w(t) dt \le 
\int_0^s \bigg({t \over W(t)}\bigg)^{p'} w(t) dt \lesssim \bigg({s\over W^{1/p}(s)}\bigg)^{p'}
$$
  and by (\ref{cond}) 
 \begin{eqnarray*}
\bigg(\int_s^\infty \bigg({s\Big[(1+{t\over s})^{1-\mu}-1\Big]\over W(t)}\bigg)^{p'} w(t) dt \bigg)^{1/p'} &\lesssim& s^\mu \bigg(\int_s^\infty \bigg({t^{1-\mu} \over W(t)}\bigg)^{p'} w(t) dt \bigg)^{1/p'}
\\
&\lesssim&
{s\over W^{1/p}(s)}.
 \end{eqnarray*}
Therefore, condition (iii) is satisfied. 

\ 

\noindent
(b) Set now $0<p\le 1$. Since by hypothesis ${W^{1/p}(t)\over t}$ is decreasing  we have (see \cite{cs2:cs2}) that 
$$
\varphi_{X'}(t)=\Vert \chi_{(0,t)}\Vert _{\bar X'}=\sup_{r>0}{\min(r,t)\over W(r)^{1/p}}\approx {t\over W(t)^{1/p}},
  $$

  \noindent
  and hence condition (i) in the previous theorem holds. 
  Now, since $0<\delta<1$ and $W^{1/p}(t)/t^{\delta}$ is quasi-decreasing,  it is known that condition (ii) holds  (see \cite{CRS}). So, it remains to prove condition (iii): Let us consider $\mu$ such that $W^{1/p} (t) t^{\mu-1}$ is quasi-increasing. Then (see \cite{cs2:cs2}):
   \begin{eqnarray*}
\bigg\Vert{1\over (1+{t\over s})^\mu}\bigg\Vert_{\bar X'}&=&\sup_{f \downarrow} {\int_0^\infty {f(t)\over (1+{t\over s})^\mu}dt\over \bigg(\int_0^\infty f^p(s) w(s) ds\bigg)^{1/p}}
\\
& \approx& \sup_{r>0} s{(1+{r\over s})^{1-\mu}-1\over W^{1/p}(r)} \lesssim {s\over W^{1/p}(s)},\end{eqnarray*}
 as we wanted to see.

    \end{proof}
    
    \medskip
    
    \begin{remarks} 
    
  \ 
    
    \noindent
    (i) Given $0<\mu\le 1$, set
    $$
    Q_\mu f(t)=\int_t^\infty {f(s)\over s^\mu} ds
    $$
    Then, it is known (see \cite{cs:cs}) that, for $1<p<\infty$, 
    $$
    Q_\mu:L^p_{\rm dec}(w)\longrightarrow L^{p,\infty}(w)
    $$
    if and only if (\ref{cond}) holds if $\mu<1$ and if $\mu=1$
    
 $$
 \bigg(\int_s^\infty \bigg({\log {t\over s}\over W(t)}\bigg)^{p'} w(s) ds \bigg)^{1/p'} \lesssim {1\over W^{1/p}(s)}.
 $$
  It turns out that this last condition together with $w\in B_p$ is necessary and sufficient for the boundedness on $\Lambda^p(w)$ of the Hilbert transform (\cite{s1:s1}).  
  
  \noindent
  (ii) If $0<p\le 1$, the condition that there exists $\mu$ such that $W^{1/p}(t)t^{\mu-1}$ is quasi-increasing is equivalent 
  to the fact that the operator
  $$
  R_\mu f(t)={1\over t^{1-\mu}}\int_t^\infty {f(s)\over s^\mu}ds
  $$
 satisfies
  $$
  R_\mu:L^{p}_{\rm dec}(w) \rightarrow L^{p,\infty}(w). 
  $$
    To see this, observe that  $R_\mu f$ is decreasing and hence, for every $t>0$,
    \begin{eqnarray*}
    \sup_{f\downarrow }{W^{1/p}(t) t^{\mu-1} \int_t^\infty {f(s)\over s^\mu} ds\over \bigg(\int_0^\infty f^p(t) w(t) dt\bigg)^{1/p}}&=&\sup_{r>t }{W^{1/p}(t) t^{\mu-1} \int_t^r s^{-\mu} ds\over W^{1/p}(r)}\\
    &\approx &
    {W^{1/p}(t)\over t^{1-\mu}} \sup_{r>t }{r^{1-\mu}-t^{1-\mu}\over W^{1/p}(r)},
       \end{eqnarray*}
    which is finite by the hypothesis assumed  on $w$. 
    
    \end{remarks}

Using now formula (\ref{rii})  with $s>1$ in the case of  $A_p$  weights  and $s>0$  in the case of $A_\infty$ weights, we can also prove the following: 
    
    \begin{theorem} Let  $T$ be an operator such that 
$$
T:L^{p_0}(u) \longrightarrow L^{p_0, \infty}(u)
$$
for every $u\in A_{p_0}$ with constant depending on $\Vert u\Vert _{A_{p_0}}$.

\noindent
(a) If    for some $0<\mu<1$ and some $0<r<p$, 
 \begin{equation}\label{normmin}
 \Big\Vert \min\Big(1, {s\over t}\Big)^\mu\Big\Vert _{\Lambda^r(w)^*}\lesssim s^{1/r'}
    \end{equation}
    and $w\in B_p$   if $1<p<\infty$ or   $W^{1/p}(t)/t $ is quasi-decreasing   if $p\le 1$, then 
     $$
T:\Lambda^{p}(w) \longrightarrow \Lambda^{p,\infty}(w) 
$$
is bounded. 

\noindent
  (b)   If  the hypothesis holds for every $u\in A_\infty$,  the parameter $r$ in (a) can be taken $0<r<\infty$ to obtain the same conclusion.

  \end{theorem}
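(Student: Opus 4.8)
The plan is to follow the scheme used for Theorem~\ref{324} and for the two weighted-Lorentz theorems above, feeding the distribution estimate~(\ref{rii}) into a self-improving inequality for $\lambda_{Tf}(y)$. As everywhere in Section~\ref{S:main}, I would first reduce by truncation to the case in which $Tf$ is bounded with compact support, so that $\lambda_{Tf}(y)<\infty$ for every $y>0$. Then I would apply~(\ref{rii}) to the pair $(f,|Tf|)$ with the exponent $s$ taken equal to $p/r$, where $r$ is the parameter in~(\ref{normmin}). This choice is legitimate: in case~(a) the hypothesis concerns only $A_{p_0}$ weights, so~(\ref{rii}) is available for every $s>1$, and $p/r>1$ precisely because $r<p$; in case~(b), where the hypothesis holds for all $A_\infty$ weights, (\ref{rii}) (equivalently~(\ref{*})) holds for every $s>0$, which is exactly why $r$ may now be any number in $(0,\infty)$.

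Next I would bound the two terms on the right of~(\ref{rii}) separately. For the maximal term, the assumption on $w$ --- $w\in B_p$ when $1<p<\infty$, or $W^{1/p}(t)/t$ quasi-decreasing when $p\le1$ --- is precisely what guarantees $M\colon\Lambda^p(w)\to\Lambda^{p,\infty}(w)$ (Ari\~{n}o--Muckenhoupt; see~\cite{ArMu,CRS}); since $w\in B_p$ self-improves to $w\in B_{p-\e}$, the same holds for $M_\delta$ with $\delta$ close to $1$, so the maximal term is controlled, in the $\Lambda^{p,\infty}(w)$-sense, by $\Vert f\Vert_{\Lambda^p(w)}/y$. For the integral term in~(\ref{rii}) --- which, with the choice $s=p/r$, equals $\int_0^\infty f^*(t)^{p/r}\min(\lambda_{Tf}(y)/t,1)^\mu\,dt$ --- I would use H\"older's inequality in the duality between $\Lambda^r(w)$ and $\Lambda^r(w)^*$, together with~(\ref{normmin}). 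The key observation is that $(f^*)^{p/r}$ is already a decreasing function of $t$, so that $\Vert(f^*)^{p/r}\Vert_{\Lambda^r(w)}=\Vert f\Vert_{\Lambda^p(w)}^{p/r}$, while the remaining factor is estimated by~(\ref{normmin}) evaluated at $s=\lambda_{Tf}(y)$, giving a bound $\lesssim\lambda_{Tf}(y)^{1/r'}$.

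Putting these together produces the self-improving inequality
$$
\lambda_{Tf}(y)\ \lesssim\ \lambda_{Mf}(y)\ +\ \Big(\frac{\Vert f\Vert_{\Lambda^p(w)}}{y}\Big)^{p/r}\lambda_{Tf}(y)^{1/r'},
$$
and from here the conclusion follows by the now-familiar dichotomy: if the first term dominates one invokes the boundedness of $M$ (resp.\ $M_\delta$) together with the doubling of $W$, while if the second dominates one divides by the appropriate power of $\lambda_{Tf}(y)$ (finite by the truncation) and reads off $y\,W^{1/p}(\lambda_{Tf}(y))\lesssim\Vert f\Vert_{\Lambda^p(w)}$, that is $\Vert Tf\Vert_{\Lambda^{p,\infty}(w)}\lesssim\Vert f\Vert_{\Lambda^p(w)}$. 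The step I expect to require the most care is exactly this last one: the level sets $\lambda_{Tf}(y)$ are measured in Lebesgue measure whereas the target quasinorm weights them through $W$, and condition~(\ref{normmin}) (together with $w\in B_p$) has to be used at full strength for these two scales to match; everything else is a routine repetition of the arguments already carried out for Theorem~\ref{324} and the preceding weighted-Lorentz theorems.
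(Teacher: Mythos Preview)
Your proposal is correct and follows essentially the same approach as the paper: apply (\ref{rii}) with $s=p/r$, control the maximal term via the $B_p$ hypothesis (or the quasi-decreasing condition when $p\le 1$), estimate the integral term by $\Lambda^r(w)$--$\Lambda^r(w)^*$ duality together with (\ref{normmin}), and close up via the resulting self-improving inequality for $\lambda_{Tf}(y)$. The paper's own argument is terser---it passes directly to $\lambda_{Mf}(y)\lesssim\|f\|^p_{\Lambda^p(w)}/y^p$ and finishes with ``from which the result follows''---so your explicit dichotomy and your flag about the Lebesgue-versus-$W$ scaling are, if anything, more careful than the original.
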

  
  \begin{proof} Let us take $s={p/ r}$ in formula (\ref{rii}).  Then
  \begin{eqnarray*}
\lambda_g(y)&\lesssim&  \lambda_{Mf}(y) + {1\over y^{s}} \int_0^\infty  f^*(t)^{s}\min\Big( {\lambda_g(y)\over t}, 1\Big)^{\mu} dt
\\
&\lesssim&
{\Vert f\Vert ^p_{\Lambda^p(w)}\over y^p}+{1\over y^{s}} \Vert f\Vert ^s_{\Lambda^p(w)} \Big\Vert \min\Big(1, {\lambda_g(y)\over t}\Big)^\mu\Big\Vert _{\Lambda^r(w)^*}
  \end{eqnarray*}
  and by  hypothesis we have that 
   \begin{eqnarray*}
& &\lambda_g(y)\lesssim
{\Vert f\Vert ^p_{\Lambda^p(w)}\over y^p}+{1\over y^{s}} \Vert f\Vert ^s_{\Lambda^p(w)}\lambda_g(y)^{{1\over r'}}  \end{eqnarray*}
from which the result follows. 
  
  \end{proof}

 \begin{remark} Condition (\ref{normmin}) can be estimated as follows (see \cite{cpss,s1:s1,cs:cs}).   Let $0<s, r<\infty$, $0<\mu<1$.

\noindent
(i) If $r>1$, 
\begin{eqnarray*}
\Big\Vert \min\Big(1, {s\over t}\Big)^\mu\Big\Vert _{\Lambda^r(w)^*}& \approx & \bigg( \int_0^s \bigg({t\over W(t)}\bigg)^{r'}w(t) dt\\
 &\ &\qquad + s^{\mu r'} \int_s^\infty  \bigg( \int_0^s \bigg({t^{1-\mu}\over W(t)}\bigg)w(t) dt \bigg)^{1/r'}.
\end{eqnarray*}

\noindent
(ii) If $r\le 1$, 
$$
\Big \Vert\min\Big(1, {s\over t}\Big)^\mu\Big \Vert _{\Lambda^r(w)^*}\approx \max \bigg( \sup_{0<r\le s} {r\over W^{1/p}(r)},  \sup_{s<r} {s^\mu r^{1-\mu}\over W^{1/p}(r)}\bigg).
$$
     
\end{remark}

 \section{The strong-type case}\label{strong}
 
 In the usual cases treated in the literature, the starting hypothesis is that 
 \begin{equation}\label{strong2}
\Vert g\Vert _{L^{p_0}(w)}\le C_u \Vert f\Vert _{L^{p_0}(w)},
 \end{equation}
 for every $w\in A_{p_0}$. We shall give in this Section very easy proofs of the classical Rubio de Francia's extrapolation result both in the linear and in the multilinear case (see \cite{R} and \cite{gm:gm} respectively):

 \begin{theorem} If (\ref{strong2})  holds, for some $p_0>1$ and all $w\in A_{p_o}$, then  for every $p>1$ and every $w\in A_{p}$
 $$
\Vert g\Vert _{L^{p}(w)}\le C_w \Vert f\Vert _{L^{p}(w)}.
 $$
 \end{theorem}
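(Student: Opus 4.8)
The plan is the classical duality-and-iteration argument of Rubio de Francia, which goes quickly here because the weak-type picture has already been laid out. Note first that the distribution formulas of Section~\ref{S:main} are of weak-type shape, so—unlike in Theorem~\ref{rf1}—they cannot by themselves produce the strong bound: one genuinely needs a different mechanism to pass from weak to strong for a bare pair $(f,g)$. (If one knew $g=\vert Tf\vert$ for a sublinear $T$ one could instead reduce to Theorem~\ref{rf1} and close by the Marcinkiewicz interpolation theorem with respect to the fixed measure $w\,dx$, applying it at exponents $p-\e$ and $p+1$, both admissible since $w\in A_p$ is open; but that route is unavailable for an arbitrary pair.)

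Fix $p>1$ and $w\in A_p$; by the truncation used throughout Section~\ref{S:main} we may assume $g$ bounded with compact support, so $\Vert g\Vert_{L^p(w)}<\infty$. I would split into two ranges. If $p>p_0$, set $r=(p/p_0)'$ and dualize in $L^{p/p_0}(w)$: there is $h\ge 0$ with $\Vert h\Vert_{L^r(w)}=1$ and
\[
\Vert g\Vert_{L^p(w)}^{p_0}=\Vert g^{p_0}\Vert_{L^{p/p_0}(w)}=\int_{\R^n}g^{p_0}\,hw.
\]
If $1<p<p_0$, dualize instead in $L^p(w)$ (so there is $h\ge 0$ with $\Vert h\Vert_{L^{p'}(w^{1-p'})}=1$ and $\Vert g\Vert_{L^p(w)}=\int_{\R^n}gh\,dx$) and then use H\"older's inequality in the form $\int gh\le\bigl(\int g^{p_0}\phi\bigr)^{1/p_0}\bigl(\int h^{p_0'}\phi^{1-p_0'}\bigr)^{1/p_0'}$ for a weight $\phi$ still to be produced. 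In either case the aim is to build, out of $h$ and $w$, a weight $\phi\in A_{p_0}$ whose $A_{p_0}$-constant is controlled by $\Vert w\Vert_{A_p}$ alone and which dominates the measure paired with $g^{p_0}$; the hypothesis~(\ref{strong2}) then replaces $\int g^{p_0}\phi$ by $C\int f^{p_0}\phi$, and a final H\"older step—using the construction of $\phi$ on the $h$-side and $w\in A_p$ on the $f$-side—bounds the whole expression by $C'\Vert f\Vert_{L^p(w)}^{p_0}$ (respectively $C'\Vert f\Vert_{L^p(w)}$).

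The weight $\phi$ is manufactured by the Rubio de Francia iteration: for a sublinear operator $S$ bounded on the relevant weighted space, whose ``$A_1$-type'' fixed-point property is exactly what is needed, put $\mathcal R\psi=\sum_{k\ge 0}S^k\psi/(2\Vert S\Vert)^k$, so that $\psi\le\mathcal R\psi$, $\Vert\mathcal R\psi\Vert\le 2\Vert\psi\Vert$, and $S(\mathcal R\psi)\le 2\Vert S\Vert\,\mathcal R\psi$; running this once for the ``numerator'' and once for the ``denominator'' of the $A_{p_0}$ condition and combining the outputs as $\phi=(\mathcal R\psi_1)(\mathcal R\psi_2)^{1-p_0}$ yields the desired $A_{p_0}$ weight. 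I expect the one delicate point to be precisely the choice of the iterating operator $S$ and of the powers of $w$ inserted into $\psi_1,\psi_2$ so that $\phi\in A_{p_0}$ (with the right control) follows from $w\in A_p$ \emph{for all} $p>1$, not merely for $p$ near $p_0$: when $\vert p-p_0\vert$ is large one is forced to iterate against a maximal operator twisted by a power of $w$. Everything else is routine bookkeeping with H\"older's inequality.
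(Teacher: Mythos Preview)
Your approach is correct---it is precisely the classical Rubio de Francia iteration argument---but it is \emph{not} the route the paper takes, and your opening remark that ``one genuinely needs a different mechanism'' is off the mark. The paper's whole point in this section is that a strong-type analogue of the splitting trick of Section~\ref{S:main} does the job in one stroke, with no case distinction between $p<p_0$ and $p>p_0$ and no iteration algorithm at all. Concretely, the paper splits $\int g^p w$ over the sets $\{M_\delta f>g\}$ and $\{M_\delta f\le g\}$; on the second set it inserts the factor $(g/M_\delta f)^{p_0-1}\ge 1$ and then majorizes $g^{p-1}w$ by $M_\mu(g^{p-1}w)$, so that the weight paired against $g^{p_0}$ is $(M_\delta f)^{1-p_0}M_\mu(g^{p-1}w)\in A_{p_0}$. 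Applying the hypothesis, using $f^{p_0}(M_\delta f)^{1-p_0}\le f$, and H\"older (with $w^{1-p'}\in A_{\mu p'}$ for $\mu$ close to $1$) gives $\Vert g\Vert_{L^p(w)}^p\lesssim \Vert f\Vert_{L^p(w)}^p+\Vert f\Vert_{L^p(w)}\Vert g\Vert_{L^p(w)}^{p-1}$, whence the result.

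So the $A_{p_0}$ weight is manufactured \emph{directly} as a product of an $(M_\delta f)^{1-p_0}$ factor and an $M_\mu(\,\cdot\,)$ factor, exactly as in Theorems~\ref{teo1}--\ref{teo2}, rather than via $\mathcal R$-iteration. What your approach buys is familiarity and portability (it is the textbook proof and adapts to many other extrapolation frameworks); what the paper's approach buys is brevity and uniformity in $p$---indeed the introduction singles out the absence of a $p\lessgtr p_0$ split as the novelty here. Your sketch would work, but you should be aware that it misses the paper's intended contribution.
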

 
 \begin{proof} Let $w\in A_p$. Then 

 \begin{eqnarray*}
  \int_{\mathbb R^n} |g(x)|^p w(x) dx
&\le&   \int_{\{M_\delta f>g\}} |M_\delta f(x)|^p w(x) dx+ 
  \int_{ \{ M_\delta f\le g\}} g(x)^p w(x) dx
  \\
 &\le&   \int_{\mathbb R^n} |M_\delta f(x)|^p w(x) dx+ 
  \int_{\mathbb R^n} g(x)^p \bigg({g(x)\over M_\delta f(x)}\bigg)^{p_0-1}w(x) dx\\
  & =&I+II.
 \end{eqnarray*}
 The bound for $I$ is clear. For the other term we have 
$$
 II\le  \int_{\mathbb R^n } g(x)^{p_0} (M_\delta f)(x)^{1-p_0} M_\mu(g^{p-1} w)(x)dx,
 $$
 and since $(M_\delta f)^{1-p_0} M_\mu(g^{p-1} w)\in A_{p_0}$  we can apply the hypothesis to get that 
  \begin{eqnarray*}
 II&\lesssim&   \int_{\mathbb R^n } f(x)^{p_0} (M_\delta f)(x)^{1-p_0} M_\mu( g^{p-1} w)(x)dx
 \\
 &\lesssim& 
  \int_{\mathbb R^n } f(x) M_\mu( g^{p-1}w )(x)dx\le \Vert f\Vert _{L^p(w)} \Vert M_\mu( g^{p-1} w)\Vert _{L^{p'}(w^{1-p'})}
  \\
  &\lesssim& \Vert f\Vert _{L^p(w)} \Vert g\Vert ^{p-1}_{L^p(w)},
 \end{eqnarray*}
where $\mu$ is chosen sufficiently close to 1. Consequently,
 $$
 \Vert g\Vert ^p_{L^p(w)}\lesssim \Vert f\Vert ^p_{L^p(w)}+ \Vert f\Vert _{L^p(w)} \Vert g\Vert^{p-1}_{L^p(w)},
 $$
 from which the result follows. 
 \end{proof}
 
 Let us see now the multilinear version. 
 
  \begin{theorem} Let $1/p_1+1/p_2=1/p$ ($1<p_1,p_2)$, and let  $(f_1, f_2, g)$ be such that for every $u_j\in A_{p_j}$ and $u=u_1^{p/p_1}u_2^{p/p_2}$,
 $$
\Vert g\Vert _{L^{p}(u)}\le C_{u_1, u_2} \Vert f_1\Vert _{L^{p_1}(u_1)}\Vert f_2\Vert _{L^{p_2}(u_2)}.
 $$
 Then, for every $w_j\in A_{q_j}$ and $w=w_1^{q/q_1}w_2^{q/q_2}$, with $1/q_1+1/q_2=1/q$ ($1<q_1,q_2)$,
 $$
\Vert g\Vert _{L^{q}(u)}\le C_{w_1, w_2} \Vert f_1\Vert _{L^{q_1}(w_1)}\Vert f_2\Vert _{L^{q_2}(w_2)}. 
 $$
 \end{theorem}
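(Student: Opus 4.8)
The plan is to follow the same two-step scheme as in the proof of Theorem~\ref{rfml}, but replacing the weak-type quasinorms by the strong $L^q$ norms and using, in place of the distribution formula of Theorem~\ref{multi}, the direct ``$g^{q}$-splitting'' computation of the preceding (linear strong-type) theorem. By symmetry we may assume $p_1\le p_2$, and by the standard truncation of $g$ (cf.\ the beginning of Section~\ref{S:main}) we may assume $\int_{\R^n}g^{q}w<\infty$ for every weight $w$ that occurs. The two steps are: (i) prove the bound at the diagonal point $q_1=q_2=p_2$ (hence exponent $p_2/2$ and combined weight $w=w_1^{1/2}w_2^{1/2}$, $w_1,w_2\in A_{p_2}$); (ii) extrapolate from that diagonal estimate to an arbitrary $(q_1,q_2)$.

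For step (i), fix $0<\rho<1$ close enough to $1$ that $w_1,w_2\in A_{\rho p_2}$, and split
\[
\int g^{p_2/2}w\ \le\ \int (M_\rho f_1\,M_\rho f_2)^{p_2/2}w\ +\ \int_{\{M_\rho f_1 M_\rho f_2\le g\}} g^{p_2/2}w\ =:\ I+II .
\]
The term $I$ is $\lesssim\|f_1\|_{L^{p_2}(w_1)}^{p_2/2}\|f_2\|_{L^{p_2}(w_2)}^{p_2/2}$ by the boundedness $M_\rho:L^{p_2}(w_j)\to L^{p_2}(w_j)$ and Hölder's inequality in $w=w_1^{1/2}w_2^{1/2}$. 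For $II$ I would insert the factor $\big(g/(M_\rho f_1 M_\rho f_2)\big)^{\alpha}$ with $\alpha=p/p_1'$, write $g^{p_2/2+\alpha}=g^{p}\cdot g^{p_2/2+\alpha-p}$ — the leftover exponent being $p_2/2+\alpha-p=p_2/2-p/p_1\ge 0$ because $p/p_1<p\le p_2/2$ — hide this leftover power of $g$ together with a suitable splitting of the weight $w$ inside two maximal operators $M_\mu(\cdot)$ exactly as in the proof of Theorem~\ref{multi} (the two linear equations for $\beta_1,\beta_2,\gamma_1,\gamma_2$, the parameter $s$, and the factorization of an $A_{p_2}$ weight as $w_{2,1}^{1-p_2}w_{2,2}$ with $w_{2,j}\in A_1$), and then apply the strong hypothesis at $(p_1,p_2)$ to $g^{p}$ against the weight so produced, which will be of the form $v_1^{p/p_1}v_2^{p/p_2}$ with $v_j\in A_{p_j}$. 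Using $M_\rho f_j\ge f_j$ and Hölder's inequality collapses the $v_j$ and leaves $II\lesssim\|f_1\|_{L^{p_2}(w_1)}^{a}\|f_2\|_{L^{p_2}(w_2)}^{a}\big(\int g^{p_2/2}w\big)^{\theta}$ for suitable $a$ and some $0\le\theta<1$; since $\int g^{p_2/2}w<\infty$, Young's inequality absorbs the last factor and finishes step (i).

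Step (ii) repeats the very same computation, now starting from the diagonal estimate $\|g\|_{L^{p_2/2}(w)}\lesssim\|f_1\|_{L^{p_2}(w_1)}\|f_2\|_{L^{p_2}(w_2)}$ and aiming at $(q_1,q_2,q)$. This repetition is genuinely simpler: as already observed in the proof of Theorem~\ref{rfml}, the auxiliary parameter $s$ vanishes at a diagonal starting point, and the leftover power of $g$ is now $q-\tfrac12$, which is $\ge 0$ since $q_1,q_2>1$ forces $\tfrac1q=\tfrac1{q_1}+\tfrac1{q_2}<2$, i.e.\ $q>\tfrac12$. The exponent bookkeeping is identical to the off-diagonal part of the proof of Theorem~\ref{rfml}.

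I expect the only real work to be this weight bookkeeping: choosing $\rho$ and $\mu$, checking that the weights assembled from $M_\rho f_j$, $w_1$, $w_2$ and the auxiliary $A_1$ weights really lie in $A_{p_j}$ (which is exactly what is verified in the proof of Theorem~\ref{multi}), and confirming that the fractional exponents recombine so that the Hölder factors become honest powers of $\int g^{p_2/2}w$ (resp.\ $\int g^{q}w$) with exponent strictly less than $1$. The one genuinely new algebraic point — that the powers of $g$ left over from the splitting are nonnegative and hence can be swallowed by the maximal operators — is immediate from $p_1\le p_2$ and $q_1,q_2>1$, and this is precisely why routing the argument through the diagonal point $(p_2,p_2)$ is convenient.
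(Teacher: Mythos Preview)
Your proposal is correct and follows essentially the paper's own argument: the same two-step scheme via the diagonal point $(p_2,p_2)$, the same $g^{q}$-splitting against $M_\rho f_1 M_\rho f_2$, the same choice $\alpha=p/p_1'$ (resp.\ $(p_2-1)/2$ at the diagonal), and the same use of the weight machinery from Theorem~\ref{multi} to build the $A_{p_j}$ weights needed to invoke the strong hypothesis. The only cosmetic difference is that you carry out the two steps in the opposite order (general $\to$ diagonal first, then diagonal $\to$ arbitrary), whereas the paper first treats the case $p_1=p_2$ and afterwards reduces a general $(p_1,p_2)$ to the diagonal; since the two steps are independent this is immaterial.
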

 
 \begin{proof} The proof is an extension of the above proof in the linear case together with the idea developed in Theorem \ref{rfml} for the weak case. 
 
  Let us start assuming that  $p_1=p_2=2p$ and let $w_j\in A_{q_j}$ and $w=w_1^{q/q_1}w_2^{q/q_2}$.

 \begin{eqnarray*}
 & & \int_{\mathbb R^n} |g(x)|^{q} w(x) dx
 \\
 &\le&   \int_{\{M_\delta f_1 M_\delta f_2 >g\}} (M_\delta f_1 (x) M_\delta f_2 (x)   )^{q} w(x) dx+ 
  \int_{ \{M_\delta f_1 M_\delta f_2 \le g\}} g(x)^{q} w(x) dx
  \\
 &\le&  \int_{\mathbb R^n} (M_\delta f_1 (x) M_\delta f_2 (x)   )^{q} w(x) dx+ 
  \int_{\mathbb R^n} g(x)^{q}\bigg({g(x)\over M_\delta f_1 (x) M_\delta f_2 (x)}\bigg)^{{p_1-1\over 2}}w(x) dx 
  \\
  &=&I+II
 \end{eqnarray*}
 Now, 
$$
 II\le  \int_{\mathbb R^n } g^{p} (M_\delta f_1)^{{1-p_1\over 2}} (M_\delta f_2)^{{1-p_2\over 2}}M_\mu(g^{{q\over q'_1}} w_1^{\beta_1}w_2^{\beta_2})^{{1/2}}M_\mu( g^{{q\over q'_2}} w_1^{\gamma_1}w_2^{\gamma_2})^{{1/2}}dx,
 $$
 and since $(M_\delta f_1)^{1-p_1} M_\mu(g^{{q\over q'_1}} w_1^{\beta_1}w_2^{\beta_2})\in A_{p_1}$ and  $(M_\delta f_2)^{1-p_2} M_\mu(g^{{q\over q'_2}} w_1^{\gamma_1}w_2^{\gamma_2})\in A_{p_2}$ we can apply the hypothesis to get that, choosing the parameters $\beta_j$ and $\gamma_j$ as in Theorem
 \ref{rfml}, 
  \begin{eqnarray*}
 II&\le&   \bigg(\int_{\mathbb R^n } f_1^{p_1}(x) M_\delta f_1(x)^{1-p_1} M_\mu(g^{{q\over q'_1}} w_1^{\beta_1}w_2^{\beta_2})(x)dx\bigg)^{1/2}
 \\
 &&\qquad \times
  \bigg(\int_{\mathbb R^n } f_2^{p_2}(x) M_\delta f_2(x)^{1-p_2} M_\mu(g^{{q\over q'_2}} 
  w_1^{\gamma_1}w_2^{\gamma_2})(x)dx\bigg)^{1/2}
  \\
  &\le&   \bigg(\int_{\mathbb R^n } f_1(x)  M_\mu(g^{{q\over q'_1}} w_1^{\beta_1}w_2^{\beta_2})(x)dx\bigg)^{1/2}\!
  \bigg(\int_{\mathbb R^n } f_2(x)M_\mu(g^{{q\over q'_2}} 
  w_1^{\gamma_1}w_2^{\gamma_2})(x)dx\bigg)^{1/2}
 \\
 &\le&  \Vert f_1\Vert ^{1/2}_{L^{q_1}(w_1)}\Vert f_2\Vert ^{1/2}_{L^{q_2}(w_2)} \Vert g\Vert ^{{q\over 2q'_1}+ {q\over 2q'_2}}_{L^q(w)}.
 \end{eqnarray*}
 Consequently,
 $$
 \Vert g\Vert ^q_{L^q(w)}\lesssim \Vert f_1\Vert ^{q}_{L^{q_1}(w_1)}\Vert f_2\Vert ^{q}_{L^{q_2}(w_2)}+ 
  \Vert f_1\Vert ^{1/2}_{L^{q_1}(w_1)}\Vert f_2\Vert ^{1/2}_{L^{q_2}(w_2)} \Vert g\Vert ^{{q\over 2q'_1}+ {q\over 2q'_2}}_{L^q(w)},
  $$
 from which the boundedness we are looking for  follows. 
 
Now, it remains to extrapolate from a general $(p_1, p_2)$ to a point in the diagonal as it was done in the weak case. 
 
We can assume, without loss of generality, that  $p_1\le p_2$ and let us extrapolate to the case $q_1=q_2=p_2$. Let $w_j\in A_{p_2}$ and $w=w_1^{1/2}w_2^{1/2}$.

 \begin{eqnarray*}
 & & \int_{\mathbb R^n} |g(x)|^{p_2/2} w(x) dx
 \le    \int_{\{M_\delta f_1 M_\delta f_2 >g\}} (M_\delta f_1 (x) M_\delta f_2 (x)   )^{p_2/2} w(x) dx
  \\
& & \qquad + 
  \int_{ \{M_\delta f_1 M_\delta f_2 \le g\}} g(x)^{p_2/2} w(x) dx
  \\
 &\le&  \int_{\mathbb R^n} (M_\delta f_1 (x) M_\delta f_2 (x)   )^{p_2/2} w(x) dx\\
 & & \qquad+ 
  \int_{\mathbb R^n} g(x)^{p_2/2}\bigg({g(x)\over M_\delta f_1 (x) M_\delta f_2 (x)}\bigg)^{{p\over p'_1}}w(x) dx 
  \\
  &=&I+II.
 \end{eqnarray*}
 Now, let $v\in A_1$ and $s=(1-p_2)\Big(1-{p'_2\over p'_1}\Big)$ as in Theorem \ref{multi}. Then, 
 \begin{eqnarray*}
 II&\le&  \int_{\mathbb R^n } g^{p} (M_\delta f_1)^{{(1-p_1)p\over p_1}} \Big((M_\delta f_2)^{{(1-p_2) p'_2\over p'_1}}\Big)^{p/p_2}v^{{sp\over p_2}}\\
 &&\qquad\times M_\mu(g^{{p_2\over 2 p'_2}} w_1^{\beta_1}w_2^{\beta_2})^{{p\over p_1}}M_\mu(v^{-s} g^{{p_2\over 2 p'_1}} w_1^{\gamma_1}w_2^{\gamma_2})^{{p\over p_2}}dx
  \end{eqnarray*}
 and since 
 $$
 (M_\delta f_1)^{1-p_1} M_\mu(g^{{p_2\over 2 p'_2}} w_1^{\beta_1}w_2^{\beta_2})\in A_{p_1}
 $$  
 and 
 $$
 (M_\delta f_2)^{{(1-p_2)}{p'_2\over p'_1}}v^{{s}}  M_\mu(v^{-s} g^{{p_2\over 2 p'_1}} w_1^{\gamma_1}w_2^{\gamma_2})\in A_{p_2},
 $$
 we can apply the hypothesis to get that 
  \begin{eqnarray*}
 II&\le&   \bigg(\int_{\mathbb R^n } f_1(x)^{p_1} (M_\delta f_1)^{1-p_1}(x) M_\mu(g^{{p_2\over 2 p'_2}} w_1^{\beta_1}w_2^{\beta_2})(x)dx\bigg)^{p/p_1}
 \\
 & & \qquad\times
 \bigg( \int_{\mathbb R^n } f_2(x)^{p_2} (M_\delta f_2)^{{(1-p_2)}{p'_2\over p'_1}}v^{{s}}(x)  M_\mu(v^{-s} g^{{p_2\over 2 p'_1}} w_1^{\gamma_1}w_2^{\gamma_2})(x) dx\bigg)^{p/p_2}  \\
 &\le&   \bigg(\int_{\mathbb R^n } f_1(x) M_\mu(g^{{p_2\over 2 p'_2}} w_1^{\beta_1}w_2^{\beta_2})(x)dx\bigg)^{p/p_1}
 \\
 & & \qquad\times
 \bigg( \int_{\mathbb R^n } f_2(x)^{p_2/p_1}v^{{s}}(x)  M_\mu(v^{-s} g^{{p_2\over 2 p'_1}} w_1^{\gamma_1}w_2^{\gamma_2})(x) dx\bigg)^{p/p_2}=II_1 \times II_2.
 \end{eqnarray*}
 To estimate the first term we proceed as usual by duality and we get that if 
 $$
\beta_1 p_2' +1-p_2'={1\over 2},\qquad \beta_2 p'_2={1\over 2},
$$
then
 \begin{eqnarray*}
 & &II_1\le \Vert f_1\Vert ^{p\over p_1}_{L^{p_2}(w_1)}  \Vert M_\mu(g^{{p_2\over 2 p'_2}} w_1^{\beta_1}w_2^{\beta_2})\Vert ^{p\over p_1}_{L^{p'_2}(w_1^{1-p'_2})}
\lesssim 
 \Vert f_1\Vert ^{p\over p_1}_{L^{p_2}(w_2)}  \Vert g\Vert ^{pp_2 \over 2p_1 p'_2}_{L^{{p_2\over 2}}(w)},  
  \end{eqnarray*}
  and for the second term, if 
  $$
  \gamma_1 p'_1={1\over 2}, \qquad 1-p'_1+\gamma_2 p'_1={1\over 2},
  $$
  we have,  choosing $v$ as  in Theorem \ref{rfml}, that
  \begin{eqnarray*}
 & & II_2\le  \Vert f_2\Vert ^{p/p_1}_{L^{p_2}(w_2)}  \Vert v^{{s}} M_\mu(v^{-s} g^{{p_2\over 2 p'_1}} w_1^{\gamma_1}w_2^{\gamma_2})\Vert^{p/p_2} _{L^{p'_1}(w_2^{1-p'_1})}
\lesssim 
 \Vert f_2\Vert ^{p/p_1}_{L^{p_2}(w_2)}  \Vert g\Vert ^{pp_2 \over 2p_2 p'_1}_{L^{{p_2\over 2}}(w)}.  
  \end{eqnarray*}

 Consequently,
 $$
 \Vert g\Vert ^{p_2/2}_{L^{p_2}(w)}\lesssim \Vert f_1\Vert ^{p_2/2}_{L^{p_2}(w_1)} \Vert f_2\Vert ^{p_2/2}_{L^{p_2}(w_2)}+
  \Vert f_1\Vert ^{p\over p_1}_{L^{p_2}(w_2)}  \Vert g\Vert ^{pp_2 \over 2p_1 p'_2}_{L^{{p_2\over 2}}(w)}   \Vert f_2\Vert ^{p/p_1}_{L^{p_2}(w_2)}  \Vert g\Vert ^{pp_2 \over 2p_2 p'_1}_{L^{{p_2\over 2}}(w)},  
  $$
 from which the result follows. 
 \end{proof}

\end{document}